\documentclass[twoside,11pt]{article}



\usepackage{jmlr2e}
\usepackage{jcdChange}

\usepackage{enumerate}
\usepackage{amsbsy}
\usepackage{amsmath}
\usepackage{amsfonts}
\usepackage{latexsym}
\usepackage{graphicx}
\usepackage{amssymb}
\usepackage{paralist}
\usepackage{tikz}
\usetikzlibrary{shapes,decorations}
\usepgflibrary{shapes.geometric}
\usepackage{amsmath}
\usepackage{subfigure}
\usepackage{multirow}
\usepackage{algorithmic}
\usepackage[ruled,vlined,boxed]{algorithm2e}
\usepackage{pst-grad} 
\usepackage{booktabs}
\usepackage{arydshln}
\usepackage[colorlinks=false,allbordercolors={1 1 1}]{hyperref}

\newtheorem{remark}[theorem]{Remark}
\let\oldremark\remark
\renewcommand{\remark}{\oldremark\normalfont}

\let\oldexample\example
\renewcommand{\example}{\oldexample\normalfont}






\firstpageno{1}

\allowdisplaybreaks

\long\def\symbolfootnote[#1]#2{\begingroup%
 \def\thefootnote{\fnsymbol{footnote}}\footnote[#1]{#2}\endgroup}

\renewcommand\eqref[1]{(\ref{#1})}


\begin{document}


\title{Swapping Variables for High-Dimensional Sparse Regression \\
with Correlated Measurements}


\editor{}

\maketitle

\vspace{-1.5cm}
\begin{center}
Divyanshu Vats and Richard G. Baraniuk \\
Rice University \\
\{dvats, richb\}@rice.edu
\end{center}

\bigskip

\sloppy 
\begin{abstract}%
We consider the high-dimensional sparse linear regression problem of accurately estimating a sparse vector using a small number of linear measurements that are contaminated by noise.  It is well known that the standard cadre of computationally tractable sparse regression algorithms---such as the Lasso, Orthogonal Matching Pursuit (OMP), and their extensions---perform poorly when the measurement matrix contains highly correlated columns.  To address this shortcoming, we develop a simple greedy algorithm, called $\SW$, that iteratively \textit{swaps} variables until convergence.  $\SW$ is surprisingly effective in handling measurement matrices with high correlations.  In fact, we prove that $\SW$ outputs the true support, the locations of the non-zero entries in the sparse vector, under a relatively mild condition on the measurement matrix.  Furthermore, we show that $\SW$ can be used to boost the performance of \textit{any} sparse regression algorithm.  We empirically demonstrate the advantages of $\SW$ by comparing it with several state-of-the-art sparse regression algorithms.  
\end{abstract}
\fussy

\begin{keywords}
Sparse regression, correlated measurements, high-dimensional statistics, variable selection
\end{keywords}



\section{Introduction}

Many machine learning and statistics applications involve recovering a high-dimensional sparse (or approximately sparse) vector $\beta^*$ given a small number of linear observations $y = X \beta^* + w$, where $X$ is a known measurement matrix and $w$ is the observation noise.  Depending on the problem of interest, the unknown sparse vector $\beta^*$ could encode relationships between genes \citep{segal2003regression}, power line failures in massive power grid networks \citep{ZhuPower2012}, sparse representations of signals \citep{candes2006robust,CameraArray}, or edges in a graphical model \citep{MeinshausenBuhl2006,ravikumar2010high}, to name just a few applications.  
The simplest, but still very useful, setting is when the observations can be approximated by a \textit{sparse} linear combination of the columns of a measurement matrix $X$ weighted by the non-zero entries of the unknown sparse vector.  Sparse regression algorithms can be used to estimate the sparse vector, and subsequently the location of the non-zero entries.  

In this paper, we study sparse regression in a setting where current state-of-the-art methods falter or fail.  One of the key reasons why current methods fail is because of \textit{high correlations} between the columns of the measurement matrix $X$.  For example, if there exists a column $X_i$ that is nearly linearly dependent on the columns indexed by the locations of the non-zero entries in $\beta^*$, then many sparse regression algorithms will falsely select~$X_i$. 

There are many situations where it is natural for $X$ to contain correlated columns.  In signal processing, certain signals may admit a sparse representation in a basis, where the  basis elements (columns of $X$) can be significantly correlated with each other \citep{elad2006image}.  Such sparse representations are useful in signal processing tasks including denoising and compression \citep{elad2010sparse}.  In functional magnetic resonance imaging (fMRI) data, measurements from neighboring voxels can be significantly correlated \citep{varoquaux2012small}.  This can lead to inaccurate understanding of the connectivity between regions of the brain.  In gene expression data analysis, expression values of genes that are in the same pathway may be significantly correlated with each other \citep{segal2003regression}.    For example,  Figure~\ref{fig:correlatedeg} shows the pairwise correlations in three popular gene expression datasets.  Each pixel in the image is the absolute value of the normalized inner product of one gene expression with another gene expression, i.e.,   $\frac{|X_i^T X_j|}{\|X_i\|_2 \| X_j\|_2}$.  The higher/lower pixel intensities correspond to genes that have higher/lower pairwise correlations.  We clearly see that all three examples contain a large number of high pixel intensities.  This means that the gene expression values are highly correlated with each other, which may to lead to inaccurate identification of genes that are most relevant to understanding a disease.

\begin{figure}
\begin{center}
\includegraphics[scale=1.0]{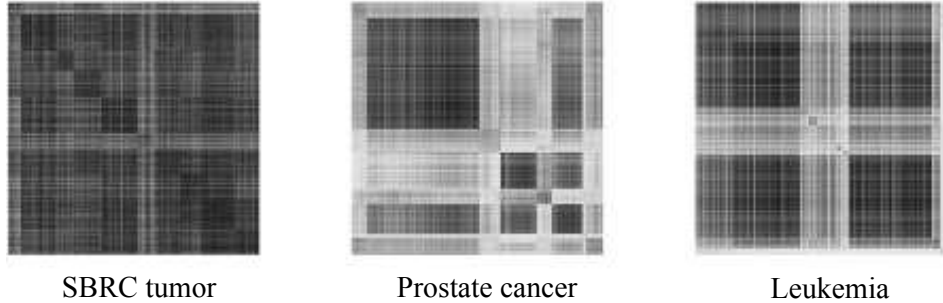}
\end{center}
\vspace{-0.5cm}
\caption{Pairwise correlations in gene expression data.  For gene expression data $X$, each pixel in the figure is equal to $\frac{|X_i^T X_j|}{\|X_i\|_2 \| X_j\|_2}$, where $X_i$ and $X_j$ refer to the columns of $X$.  The left, middle, and right figures correspond to pairwise correlations in gene expressions from patients with small round blue cell (SRBC) tumor, prostate cancer, and  leukemia, respectively.  Higher/lower pixel values corresponds to the correlation being close to $1/0$.}
\label{fig:correlatedeg}
\end{figure}
\fussy

\subsection{$\SW$: Overview}

In this paper, we develop and analyze $\SW$, a simple greedy algorithm for sparse regression with correlated measurements.  The input to $\SW$ is an estimate of the support of the unknown sparse vector $\beta^*$, i.e., the location of its non-zero entries.  The main idea behind $\SW$ is to iteratively perturb the estimate of the support by swapping variables.  The swapping is done in such a way that a loss function is minimized in each iteration of $\SW$.  In this way, $\SW$ seeks to estimate a support, in a greedy manner, that minimizes a loss function.  The main reason why $\SW$ is able to handle correlations is because even if an intermediate estimate contains a variable that is not in the true support, we show that $\SW$ can swap this variable with a true variable under relatively mild conditions on $X$.

As an example, suppose we initialize $\SW$ using a set $S^{(1)}$.  We note that $S^{(1)}$ could either be a random subset or it could be the support estimated by some other sparse regression algorithm. As we discuss later, selecting $S^{(1)}$ to be the output of a sparse regression algorithm has both statistical and computational advantages.  Starting with the support $S^{(1)}$, as illustrated in Figure~\ref{fig:swap}, $\SW$ iterates to an intermediate support $S^{(2)}$ by swapping a variable $i \in S^{(1)}$ with a variable $i' \in (S^{(1)})^c$ such that $S^{(2)} = \{S^{(1)} \backslash i\} \cup \{i'\}$.  The swapping is performed only if $S^{(2)}$ is superior to $S^{(1)}$ in terms of a given loss function and continues until convergence of the loss function.  The $\SW$ iterations can be summarized as follows:
\[
S^{(1)} \longrightarrow S^{(2)} = \{S^{(1)} \backslash i\} \cup \{i'\}
\longrightarrow S^{(3)} = \{S^{(2)} \backslash i\} \cup \{i'\} \longrightarrow \cdots \cdots \longrightarrow S^{(r)} \,.
\]

Naturally, the choice of $S^{(1)}$ plays an important role in the performance of $\SW$.  In particular, we prove the following two results:
\begin{enumerate}[({P}1)]
\item When $S^{(1)}$ misses at most one entry from the true support, $\SW$ outputs the true support under very mild conditions (see Theorem~\ref{thm:EqualCaseOne}).
\item When $S^{(1)}$ misses more than one entry from the true support, $\SW$ outputs the true support support under conditions that depend on certain correlations among the columns of $X$ (see Theorems~\ref{thm:EqualCaseTwo}, \ref{thm:EqualCaseThree}, and \ref{thm:optimalsample}).
\end{enumerate}

\begin{figure}
\centering
\scalebox{0.7} 
{
\begin{pspicture}(0,-0.96666014)(20.96,0.9266602)
\definecolor{color8b}{rgb}{0.8,0.8,1.0}
\psframe[linewidth=0.04,dimen=outer](8.72,0.90666014)(0.0,0.12666015)
\psline[linewidth=0.04cm,arrowsize=0.05291667cm 2.0,arrowlength=1.4,arrowinset=0.4,doubleline=true,doublesep=0.12]{->}(9.59,0.48666015)(11.59,0.46666014)
\psline[linewidth=0.04cm](4.2,0.88666016)(4.2,0.14666015)
\psline[linewidth=0.04cm](4.78,0.88666016)(4.78,0.14666015)
\usefont{T1}{ptm}{m}{n}
\rput(4.469092,0.53666013){\LARGE $i'$}
\psbezier[linewidth=0.04](1.4116406,0.046660155)(1.4116406,-0.43333983)(4.48,-0.43333983)(4.48,0.046660155)
\usefont{T1}{ptm}{m}{n}
\rput(2.9850976,-0.62333983){\LARGE swap}
\psframe[linewidth=0.04,linecolor=color8b,dimen=outer,fillstyle=solid,fillcolor=color8b](0.59164065,0.8666602)(0.04,0.16666016)
\psframe[linewidth=0.04,linecolor=color8b,dimen=outer,fillstyle=solid,fillcolor=color8b](1.6716406,0.8666602)(1.12,0.16666016)
\psframe[linewidth=0.04,linecolor=color8b,dimen=outer,fillstyle=solid,fillcolor=color8b](3.1316407,0.8666602)(2.58,0.16666016)
\psframe[linewidth=0.04,linecolor=color8b,dimen=outer,fillstyle=solid,fillcolor=color8b](6.4316406,0.8666602)(5.88,0.16666016)
\psframe[linewidth=0.04,linecolor=color8b,dimen=outer,fillstyle=solid,fillcolor=color8b](8.687461,0.8666602)(7.6958203,0.16666016)
\usefont{T1}{ptm}{m}{n}
\rput(1.3690917,0.51666015){\LARGE $i$}
\psframe[linewidth=0.04,dimen=outer](20.96,0.9266602)(12.24,0.14666015)
\psline[linewidth=0.04cm](16.44,0.90666014)(16.44,0.16666016)
\psline[linewidth=0.04cm](17.02,0.90666014)(17.02,0.16666016)
\usefont{T1}{ptm}{m}{n}
\rput(16.699091,0.5566602){\LARGE $i$}
\psframe[linewidth=0.04,linecolor=color8b,dimen=outer,fillstyle=solid,fillcolor=color8b](12.83164,0.88666016)(12.28,0.18666016)
\psframe[linewidth=0.04,linecolor=color8b,dimen=outer,fillstyle=solid,fillcolor=color8b](13.91164,0.88666016)(13.36,0.18666016)
\psframe[linewidth=0.04,linecolor=color8b,dimen=outer,fillstyle=solid,fillcolor=color8b](15.37164,0.88666016)(14.82,0.18666016)
\psframe[linewidth=0.04,linecolor=color8b,dimen=outer,fillstyle=solid,fillcolor=color8b](18.67164,0.88666016)(18.12,0.18666016)
\psframe[linewidth=0.04,linecolor=color8b,dimen=outer,fillstyle=solid,fillcolor=color8b](20.927462,0.88666016)(19.93582,0.18666016)
\usefont{T1}{ptm}{m}{n}
\rput(13.6390915,0.53666013){\LARGE $i'$}
\end{pspicture} 
}
\caption{An illustration of one iteration of the $\SW$ algorithm.  The shaded region corresponds to the estimate, say $S^{(t)}$, of the unknown true support that we seek to estimate.  The $\SW$ algorithm swaps a variable $i$ in $S^{(t)}$ with a variable $i'$ not in $S^{(t)}$.}
\label{fig:swap}
\end{figure}
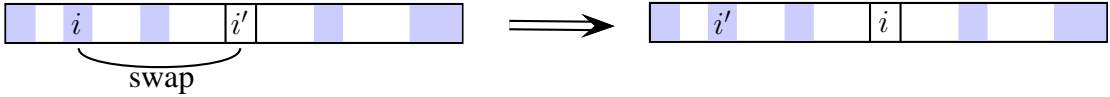

Property (P1) shows that $\SW$ can be used to post-process the output of any sparse regression algorithm without sacrificing for performance.  Property (P2) shows that $\SW$ can be used to estimate the true support of $\beta^*$ even when a sparse regression algorithm outputs a support set that differs significantly from the true support.  The particular condition in (P2), defined in Section~\ref{sec:theory}, highlights the role of $S^{(1)}$.  In particular, a sparse regression algorithm that can identify a larger fraction of the true support can potentially tolerate higher correlations among the columns of $X$.  

\begin{figure}[t]
\begin{center}
\vspace{-0.2cm}
\includegraphics[scale=0.8]{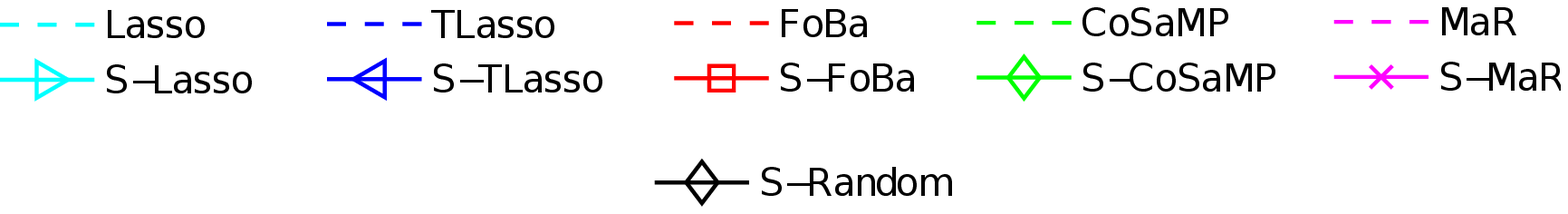}
\vspace{-0.7cm}
\end{center}
\begin{center}
\includegraphics[scale=0.46]{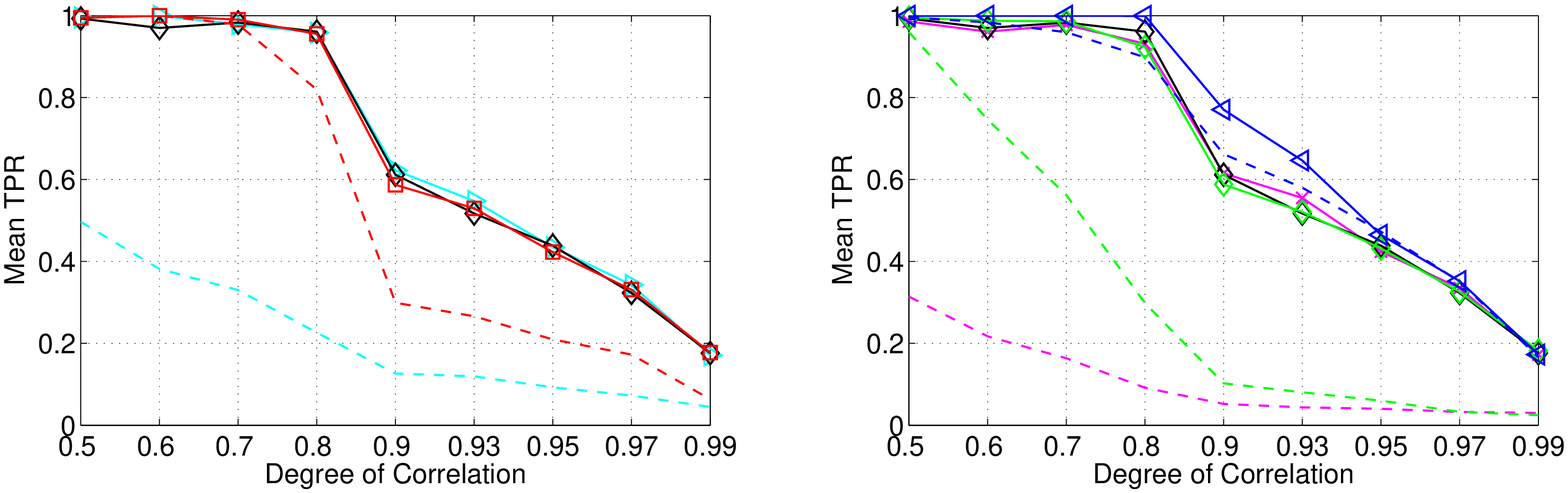}
\end{center}
\caption{ Mean true positive rate (TPR) versus the degree of correlation for several different sparse regression algorithms.  As the \textit{degree of correlation} increases, the amount of correlations among the columns of $X$ increases.  See method (E1) in Section~\ref{subsec:syn} for details about the experimental setup.  The dashed lines correspond to standard sparse regression algorithms, while the solid lines with markers correspond to $\SW$ based regression algorithms.  }
\label{fig:tpr_E1_Corr_Plot}
\end{figure}

We demonstrate the empirical performance of SWAP on synthetic and real gene expression data. In particular, we use several state-of-the-art sparse regression algorithms to initialize SWAP. For every initialization, we demonstrate that SWAP leads to improved support recovery performance.  For example, in Figure~\ref{fig:tpr_E1_Corr_Plot}, we plot the true positive rate (TPR), i.e. $|\widehat{S} \cap S^*|/k$, as the amount of correlation between the columns of $X$ increases.  See Section~\ref{subsec:syn} for details about the experimental setup.  The dashed lines in the plots correspond to a standard sparse regression algorithm, while the solid lines with markers correspond to a $\SW$ algorithm.
In most cases, we observe that the solid lines lie above the dashed line with the same color, which shows that $\SW$ is able to improve the performance of sparse regression algorithms.  Furthermore, we show that initializing $\SW$ using other sparse regression algorithms has computational benefits as opposed to initializing $\SW$ using a random subset.  In particular, if a sparse regression algorithm can partially estimate the support, then $\SW$ needs a smaller number of iterations to converge to the true support when compared to the case when $\SW$ is initialized with a random support.

\subsection{Related Work}

\sloppy
Current state-of-the-art computationally tractable sparse regression algorithms are only reliable under weak correlations among the columns of the measurement matrix.  These correlations are quantified either using the irrepresentability condition \citep{ZhaoYuLasso2006,tropp2007signal,MeinshausenBuhl2006,Wainwright2009} and/or various forms of the restricted eigenvalue (RE) conditions \citep{meinshausen2009lasso,bickel2009simultaneous}.  See \citet{buhlmann2011statistics} for a comprehensive review of such methods and the related conditions.


To the best of our knowledge, among the current methods in the literature, multi-stage methods \citep{wasserman2009high,meinshausen2009lasso,GeerBuhlZhouAdaptive2011,javanmard2013model} are most appropriate for handing correlated variables in regression.  An example of a multi-stage method is thresholded Lasso (TLasso), which first applies Lasso, a popular sparse regression algorithm proposed in \citet{TibshiraniLasso1994}, and then thresholds the absolute value of the estimated non-zero entries to output a final support.  Theoretically, TLasso requires an RE based condition for accurate support recovery that is stronger than the RE based condition required by $\SW$.  Moreover, TLasso, and other multi-stage methods, typically require more computations, since they require that computationally intensive model selection methods, such as cross-validation, be applied more than once to produce the final estimate.

When the columns in $X$ are highly correlated with other, exact support recovery may not be feasible, even with $\SW$.  In such cases, it is instead desirable to obtain a superset of the true support such that the superset is as small as possible.  Several methods have been proposed in the literature for estimating such a superset; see \citet{zou2005regularization, she2010,grave2011trace,huang2011sparse,buhlmann2012correlated} for some examples.  The main idea in these methods is to select all the highly correlated variables, even if only one of these variables is actually in the true support.  In its current form, $\SW$ is designed for exact support recovery in the presence of correlated measurements.  However, just as $\SW$ improves the performance of standard sparse regression algorithms for exact support recovery, we believe that suitable modifications of $\SW$ can improve the various superset estimation methods to deal with highly correlated measurements.

$\SW$ can be interpreted as a \textit{genetic algorithm} for solving a combinatorial optimization problem \citep{melanie1999introduction}.  In prior work, \citet{fannjiang2012coherence} empirically show the superior performance of a slightly different version of $\SW$ for handing correlated measurements.  However, no theory was given in \citet{fannjiang2012coherence} to understand its superior performance.  Our main contribution in this paper is to develop performance guarantees for $\SW$, and thereby demonstrate the advantages of using $\SW$ for sparse regression with correlated measurements.  A portion of the results in this paper appeared in \citet{VatsSwapNIPS}.

\fussy

\section{$\SW$: Problem Formulation}

In this section, we formulate the sparse regression problem and introduce the relevant notation and assumptions that will be used throughout the paper.  We assume that $y \in \R^n$, referred to as the \textit{observations}, and $X \in \R^{n \times p}$, referred to as the \textit{measurement matrix}, are known and related to each other by the linear model
\begin{equation}
y = X \beta^* + w \,, \label{eq:linmodel}
\end{equation}
where $\beta^* \in \R^p$ is the \textit{unknown sparse vector} that we seek to estimate and $w$ is measurement noise.  Unless mentioned otherwise, we assume the following throughout this paper:
\begin{enumerate}[({A}1)]
\item  The vector $\beta^*$ is $k$-sparse with the location of the non-zero entries given by the set $S^*$.  It is common to refer to $S^*$ as the \textit{support} of $\beta^*$, and we adopt this notation throughout the paper.  Furthermore, we refer to a variable in the support as an \textit{active} variable and refer to a variable outside the support as an \textit{inactive} variable.
\item The matrix $X$ is fixed with normalized columns, i.e., $\|X_i\|_2^2/n = 1$ for all $i \in [p]$, where $[p] = \{1,2,\ldots,p\}$.  In practice, normalization can easily be done by scaling $X$ and $\beta^*$ accordingly. 
\item The entries of $w$ are i.i.d.\ zero-mean sub-Gaussian random variables with parameter $\sigma$ so that $\E[\exp(t w_i)] \le \exp(t^2 \sigma^2/2)$.  The sub-Gaussian condition on $w$ is common in the literature and allows for a wide class of noise models, including Gaussian, symmetric Bernoulli, and bounded random variables \citep{VershyninRMT}.
\item The number of observations $n$ is greater than or equal to $k$, i.e., $n \ge k$.  As we shall see later, this assumption is important to compute the estimates of $\beta^*$.
\item The number of observations $n$ and the sparsity level $k$ are all allowed to grow to infinity as $p$ goes to infinity.  In the literature, this is referred to as the \textit{high-dimensional framework}.
\end{enumerate}
For any set $S$, we associate a loss function, ${\cal L}(S;y,X)$, that represents the cost associated with estimating $S^*$ by the set $S$.  An appropriate loss function for the linear problem in (\ref{eq:linmodel}) is the least-squares loss, which is given by 
\begin{align}
{\cal L}(S;y,X) \defn \min_{\alpha \in \R^{|S|}} \|y - X_{S} \alpha \|_2^2 = \bn \Pi^{\perp}[S] y \bn_2^2 \,, \label{eq:ls}
\end{align}
where $X_S$ refers to the $n \times |S|$ matrix that only includes the columns indexed by $S$ and $\Pi^{\perp}[S] = I - \Pi[S] = I - X_S ( X_S^T X_S)^{-1} X_S^T$ is the orthogonal projection onto the kernel of the matrix $X_S$. 
Assumption~(A4) is required so that the inverse of $X_S^T X_S$ exists.
Throughout this paper, we mainly study the problem of estimating $S^*$, since, once $S^*$ has been estimated, an estimate of $\beta^*$ can be easily computed by solving a constrained least-squares problem.  The subsequent error in estimating $\beta^*$ is given by following proposition.
\begin{proposition}
\label{thm:basicl2}
Let $\widehat{S}$ be an estimate of $S^*$ such that $\Pr(\widehat{S} \ne S^*) \le e^{-f(n,p,k)}$.  For any $ \tau_n \ge 1$, the constrained least-squares estimator, $\widehat{\beta} = (X_{\widehat{S}}^T X_{\widehat{S}})^{-1} X_{\widehat{S}}^T y$, satisfies the following bound with probability at least $ 1 - e^{-\tau_n - f(n,p,k)}$:
\begin{equation}
\| \widehat{\beta} - \widehat{\beta}^* \|_2^2 
\le c k \sigma^2 \tau_n/ (n \rho_{k}^2) \,, \label{eq:betal2bound}
\end{equation}
where $c$ is a universal positive constant and $\rho_k$ is the minimum eigenvalue of $X_{S^*}^T X_{S^*}/n$.
\end{proposition}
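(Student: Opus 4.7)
\medskip
\noindent\textbf{Proof plan for Proposition~\ref{thm:basicl2}.}

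The plan is to decompose the probability space according to whether support recovery succeeds, handle each piece separately, and then apply a quadratic-form concentration inequality for sub-Gaussian noise. Let $E$ denote the event $\{\widehat{S} = S^*\}$; by hypothesis, $\Pr(E^c) \le e^{-f(n,p,k)}$. On $E^c$ no guarantee is sought, so I can focus entirely on the event $E$, picking up the $e^{-f(n,p,k)}$ factor in the final union bound with the concentration event.

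On $E$, the estimator $\widehat{\beta}$ (extended by zeros outside $\widehat{S}=S^*$) agrees with the OLS estimator on the true support. Substituting $y = X_{S^*}\beta^*_{S^*} + w$ into the definition of $\widehat{\beta}$ gives the explicit error formula
\begin{equation*}
\widehat{\beta} - \beta^* \;=\; (X_{S^*}^T X_{S^*})^{-1} X_{S^*}^T w \;=\; \tfrac{1}{n} M^{-1} X_{S^*}^T w,
\end{equation*}
where $M = X_{S^*}^T X_{S^*}/n$. Taking norms and using the fact that $\|M^{-1}\|_{\mathrm{op}} = 1/\rho_k$ by Assumption~(A4) and the definition of $\rho_k$, I get
\begin{equation*}
\| \widehat{\beta} - \beta^*\|_2^2 \;\le\; \tfrac{1}{n^2 \rho_k^2} \, \| X_{S^*}^T w\|_2^2 \;=\; \tfrac{1}{n^2 \rho_k^2} \, w^T X_{S^*} X_{S^*}^T w .
\end{equation*}
This is where the $\rho_k^{-2}$ in the claimed bound comes from; everything now reduces to controlling the quadratic form $w^T A w$ for $A := X_{S^*}X_{S^*}^T$.

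Next I would bound $w^T A w$ via a Hanson--Wright type tail bound for sub-Gaussian quadratic forms. The mean is $\E[w^T A w] = \sigma^2 \operatorname{tr}(A) = \sigma^2 \operatorname{tr}(X_{S^*}^T X_{S^*}) = n k \sigma^2$ by the column normalization (A2), while $\|A\|_{\mathrm{op}} = n\rho_{\max}(M)$ and $\|A\|_F^2 \le n^2 k\,\rho_{\max}(M)$. Plugging into the standard tail estimate
\begin{equation*}
\Pr\bigl( w^T A w > n k \sigma^2 + t \bigr) \;\le\; \exp\!\left( - c \min\!\left( \tfrac{t^2}{\sigma^4 \|A\|_F^2}, \tfrac{t}{\sigma^2 \|A\|_{\mathrm{op}}} \right) \right)
\end{equation*}
and choosing $t$ so that the minimum equals $\tau_n$ yields, with probability at least $1 - e^{-\tau_n}$, a bound of the form $w^T A w \le c' n k \sigma^2 \tau_n$ (treating $\rho_{\max}(M)$ as an absolute constant, which can be folded into the universal constant $c$). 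Dividing by $n^2\rho_k^2$ gives the stated $\tfrac{c k \sigma^2 \tau_n}{n \rho_k^2}$ bound.

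The final step is to combine the two events via a union bound: the failure of $E$ (probability $\le e^{-f(n,p,k)}$) and the failure of the quadratic-form concentration (probability $\le e^{-\tau_n}$) together contribute at most $e^{-\tau_n - f(n,p,k)}$ once one notes that these two independent bad events can be combined multiplicatively rather than additively when we just need one clean high-probability bound (or, more simply, by replacing $\tau_n$ by $\tau_n + f(n,p,k)$ in the concentration step and using the standard observation that intersecting independent-in-spirit events yields a product of failure probabilities). The main obstacle is simply tracking constants through the Hanson--Wright application and justifying that $\rho_{\max}(M)$ is absorbed into the universal constant; everything else is routine linear algebra and concentration.
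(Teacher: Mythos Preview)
The paper does not give a detailed proof of Proposition~\ref{thm:basicl2}; it only says that it ``follows from standard analysis of sub-Gaussian random variables.''  Your outline is exactly that standard analysis: condition on $\{\widehat S = S^*\}$, write $\widehat\beta-\beta^* = (X_{S^*}^T X_{S^*})^{-1}X_{S^*}^T w$, and control the resulting quadratic form in $w$.  So at the level of approach there is nothing to compare.  Two points in your write-up need to be fixed, though.

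\textbf{The probability combination is wrong.}  You assert that the bad event $E^c=\{\widehat S\neq S^*\}$ and the concentration failure can be ``combined multiplicatively'' because they are ``independent-in-spirit.''  They are not independent in any useful sense: $\widehat S$ is a function of $y=X\beta^*+w$, so $E^c$ depends on the same noise $w$ that drives the quadratic-form tail.  A straight union bound gives failure probability at most $e^{-f(n,p,k)}+e^{-\tau_n}$, which is \emph{larger} than the stated $e^{-(\tau_n+f(n,p,k))}$, not smaller.  Your fallback (``replace $\tau_n$ by $\tau_n+f(n,p,k)$ in the concentration step'') does not help either: you would still carry the additive $e^{-f(n,p,k)}$ from $E^c$.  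Either the proposition's probability statement is a typo for an additive bound, or an argument beyond what you wrote is needed; in any case, the independence claim should be dropped.

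\textbf{Absorbing $\rho_{\max}(M)$ into a universal constant is not legitimate.}  With your decomposition you end up needing $w^T X_{S^*}X_{S^*}^T w \le c\,nk\sigma^2\tau_n$, and Hanson--Wright gives a bound scaling with $\|X_{S^*}X_{S^*}^T\|_{\mathrm{op}}=n\rho_{\max}(M)$ and $\|X_{S^*}X_{S^*}^T\|_F^2$.  The quantity $\rho_{\max}(M)$ depends on $X$ and can be as large as $k$ under (A2) alone, so it cannot be folded into a universal constant.  A cleaner route that avoids this is to bound the error directly as
\[
\|\widehat\beta-\beta^*\|_2^2 \;=\; w^T X_{S^*}(X_{S^*}^T X_{S^*})^{-2}X_{S^*}^T w \;\le\; \tfrac{1}{n\rho_k}\,w^T \Pi[S^*]\,w,
\]
and then apply the paper's Lemma~\ref{lemma:firstSG} to the rank-$k$ projection $\Pi[S^*]$, which gives $\|\Pi[S^*]w\|_2^2 \le c\,k\sigma^2\tau_n$ with probability at least $1-e^{-\tau_n}$ for $\tau_n\ge 1$.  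This yields $\|\widehat\beta-\beta^*\|_2^2 \le c k\sigma^2\tau_n/(n\rho_k)$, with a single power of $\rho_k$; matching the $\rho_k^2$ in the statement then only requires $\rho_k\le 1$ (which holds since the columns are normalized) or is simply a looseness in the stated bound.
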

Choosing $\tau_n = \log n$, Proposition~\ref{thm:basicl2} shows that if the support $S^*$ can be estimated accurately with high probability, then the $\ell_2$-error in estimating $\beta^*$ can be upper bounded, with high probability, by ${c k \sigma^2 \log n}/{(n \rho_{k}^2)}$.  Clearly, this bound converges to $0$ as long as $n > ck\sigma^2 \log n / \rho_k^2$.  The proof of Proposition~\ref{thm:basicl2} follows from standard analysis of sub-Gaussian random variables.  

Having established a bound on the estimation error given the true support, we now solely focus on the problem of estimating the true support $S^*$.  A classical method is to seek a support $S$ that minimizes the loss ${\cal L}(S;y,X)$.   This method, in general, may require a search over an exponentially number of possible supports.  \textit{The main goal in this paper is to design an algorithm that searches the space of all possible supports in a computationally tractable manner}.  Furthermore, we are interested in establishing conditions under which the algorithm can accurately estimate $S^*$, and as a result $\beta^*$, under much broader conditions on the measurement matrix $X$ than those imposed by current state-of-the-art methods.

The rest of the paper is organized as follows.  Section~\ref{sec:overview} presents the $\SW$ algorithm.  Section~\ref{sec:theory} analyzes $\SW$ and proves conditions under which $\SW$ leads to accurate support recovery.  Section~\ref{sec:sims} presents numerical simulations.  Section~\ref{sec:summary} concludes the paper.

\section{$\SW$: Detailed Description}
\label{sec:overview}
\begin{algorithm}[t]
\caption{$\SW(y,X,S)$}
\label{alg:MainAlg}
\smallskip
\hspace{-1em} {\textit{Inputs:} Observations $y$, measurement matrix $X$, and initial support $S^{(1)}$.} \\
\nl Let $r = 1$ and $L^{(1)} = {\cal L}(S^{(1)};y,X)$\\
\nl Swap $i \in S^{(r)}$ with $i' \in (S^{(r)})^c$ and compute the loss 
$L^{(r)}_{i,i'} = L(\{S^{(r)}\backslash i\} \cup i';y,X)$. \\
\nl \eIf{$\min_{i,i'} {\cal L}^{(r)}_{i,i'} < L^{(r)}$}{
\nl $\{\widehat{i},\widehat{i'}\} = \argmin_{i,i'} {\cal L}^{(r)}_{i,i'}$  (In case of a tie, choose a pair arbitrarily) \\
\nl Let $S^{(r+1)} = \{ S^{(r)} \backslash  \widehat{i} \}  \cup  \widehat{i'} $ and $L^{(r+1)}$ be the corresponding loss. \\
\nl Let $r = r + 1$ and repeat steps 2-4.
}{
\nl Return $\widehat{S} = S^{(r)}$.
}
\end{algorithm}

In this section, we describe the $\SW$ algorithm to find a support that minimizes a loss function. Suppose that we are given an estimate, say $S^{(1)}$, of the true support and let $L^{(1)} = {\cal L}(S^{(1)};y,X)$ be the corresponding least-squares loss (see (\ref{eq:ls})).  We want to transition to another estimate $S^{(2)}$ that is closer (in terms of the number of true variables), or equal, to $S^*$.  The $\SW$ algorithm transitions from $S^{(1)}$ to an $S^{(2)}$ in the following manner:
\[
\text{\textit{Swap every $i \in S^{(1)}$ with $i' \in (S^{(1)})^c$ and compute the  loss $L^{(1)}_{i,i'} = {\cal L}\left( \{S^{(1)}\backslash i\} \cup i' ;y,X\right)$.}}
\]
If $\min_{i,i'} L^{(1)}_{i,i'} < L^{(1)}$, then there exists a support that has a lower loss than $L^{(1)}$.  Subsequently, we find $\{\widehat{i}, \widehat{i}'\} = \arg \min_{ i,i' } L^{(1)}_{i,i'}$ and let $S^{(2)} = \{ S^{(1)} \backslash  \widehat{i}  \} \cup \{ \widehat{i}' \}$.  We repeat the above steps to find a sequence of supports $S^{(1)}, S^{(2)},\ldots,S^{(r)}$, where $S^{(r)}$ has the property that $\min_{i,i'} L^{(r)}_{i,i'}  \ge L^{(r)}$.  In other words, we stop $\SW$ when perturbing $S^{(r)}$ by one variable increases or does not change the resulting loss.  These steps are summarized in Algorithm~\ref{alg:MainAlg}.  We next make several remarks regarding $\SW$.

\subsection{Selecting the Initial Support $S^{(1)}$}  

\begin{figure}
\begin{center}
\includegraphics[scale=0.69]{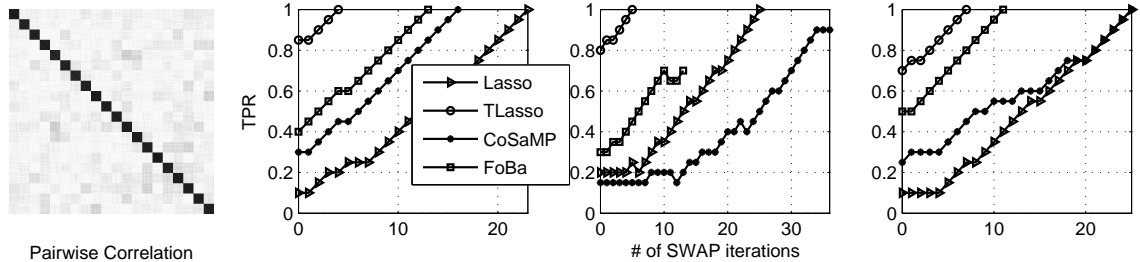}
\end{center}
\caption{An example that illustrates the performance of $\SW$ for a matrix $X$ with pairwise correlations given in the first figure.  The $(i,j)$-th pixel in the first figure is equal to $|X_i^T X_j|/n$.  Here, $p = 1000$, $n = 200$, $k = 20$, and $\sigma^2 = 1$.  Fixing $X$ and $\beta^*$, we generate \textit{three} different observations $y$ for three different realizations of the measurement noise $w$ in (\ref{eq:linmodel}).  The second, third, and fourth figures illustrate the performance of $\SW$ in the intermediate iterations for the three different realizations of the noise.  The horizontal axis is the number of $\SW$ iterations ($r$ in Algorithm~\ref{alg:MainAlg}) and the vertical axis is the true positive rate (TPR) of the intermediate estimates of the support.  }
\label{fig:swapempeg}
\end{figure}

The main input to $\SW$ is the initial support $S^{(1)}$, which also implicitly specifies the desired sparsity level of the estimated support.  Recall that $k$ is the unknown number of non-zero entries in $\beta^*$.  If $k$ is known, then $\SW$ can be initialized using the output of some other sparse regression algorithm.  In this way, $\SW$ can boost the performance of other sparse regression algorithms.

To illustrate the advantages of $\SW$, we set up a sparse regression problem with $p = 1000$, $n =200 $, $k = 20$, and $\sigma^2 = 1$, where $\sigma^2$ is the variance of the measurement noise $w$ in (\ref{eq:linmodel}).  The measurement matrix $X$ is chosen such that the pairwise correlations, i.e., the matrix $X^T X / n$, is given by the left most figure in Figure~\ref{fig:swapempeg}.  We simulate three different observation vectors for three different realizations of the noise $w$.  We use $\SW$ as a wrapper around four popular sparse regression algorithms: Lasso \citep{TibshiraniLasso1994}, thresholded Lasso (TLasso) \citep{GeerBuhlZhouAdaptive2011}, CoSaMP \citep{needell2009cosamp}, and FoBa \citep{zhang2011adaptive}.  The three plots in Figure~\ref{fig:swapempeg} show how the true positive rate (TPR) changes in the intermediate steps of the algorithm for the three different realizations of the noise, where TPR is the total number of active variables in an estimate divided by the total number of active variables.  For all the four different sparse regression algorithms, we observe that the TPR eventually increases when $\SW$ stops.  This demonstrates that $\SW$ is able to replace inactive variables in the initial estimates with active variables.  Furthermore, we generally observe that using $\SW$ with a support that contains more active variables is better for estimating the true support.  For example, TLasso is able to estimate more than half of the active variables accurately. Using $\SW$ with TLasso leads to accurate support recovery such that $\SW$ terminates after only a few iterations.  Our theoretical analysis in Section~\ref{sec:theory} sheds some light into this phenomenon.  In particular, we show that the sufficient conditions for accurate support recovery for $\SW$ become weaker as the number of active variables in the initial support $S$ increase.  

When $k$ is not known, which is the case in many applications, $\SW$ can be easily used in conjunction with other sparse regression algorithms to compute a solution path, i.e., a list of all possible estimates of the support over different sparsity levels.  Once a solution path is obtained, model selection methods, such as cross-validation or stability selection \citep{meinshausen2010stability}, can be applied to estimate the support.

\subsection{Computational Complexity} 
The main computational step in $\SW$ (Algorithm~\ref{alg:MainAlg}) is Line~2, where the loss $L_{i,i'}^{(r)}$ is computed for all possible swaps $(i,i')$.  If $s = |S^{(r)}|$, then clearly $s(p-s)$ such computations need to be done in each iteration of the algorithm.  Using properties of the orthogonal projection matrix (see Lemma~\ref{lemma:blockdecomp}) we have that for any $S$,
\begin{equation}
\Pi[S] = \Pi[S\backslash i] + \frac{(\Pi^{\perp}[S\backslash i] X_i)(\Pi^{\perp}[S\backslash i] X_i)^T}{X_i^T \Pi^{\perp}[S\backslash i] X_i} \,,\quad i \in S\,. \label{eq:rankone}
\end{equation}
To compute $L_{i,i'}^{(r)}$, we need to compute the orthogonal projection matrix $\Pi^{\perp}[\{S^{(r)} \backslash \{i\}\} \cup \{i'\}]$.  Once $\Pi^{\perp}[\{S^{(r)} \backslash \{i\}\}]$ is computed, $\Pi^{\perp}[\{S^{(r)} \backslash \{i\}\} \cup \{i'\}]$ can be easily computed for all $i' \in (S^{(r)})^c$ using the rank one update in (\ref{eq:rankone}).  Thus, effectively, the computational complexity of Line~2 is roughly $O(s(p-s) {\cal I}_{s-1})$, where ${\cal I}_{s-1}$ is the complexity of computing a projection matrix of rank $s-1$.  Using state-of-the-art matrix inversion algorithms \citep{coppersmith1990matrix}, ${\cal I}_{s} = O(s^{2.4})$.

There are several ways to significantly improve the computational complexity of $\SW$ at the expense of slightly degraded the performance.  One straightforward way, which was used in \citet{fannjiang2012coherence}, is to restrict the number of swaps using the pairwise correlations among the columns of $X$.  Another method is to first estimate a superset of the support, using methods such as in \citet{buhlmann2012correlated,VatsMuG2012}, and then restrict the swaps to only lie in the estimated superset.  Since the main goal in this paper is to study the statistical properties of $\SW$, we will address the computational aspects of $\SW$ in future work.

\subsection{Swapping Several Variables} 
A natural generalization of $\SW$ is to swap a group of $m$ variables with another group of $m$ variables.  The computational complexity of generalized $\SW$, which we refer to as $\SW^m$, is roughly $O\left( \binom{s}{m} \binom{p-s}{m} {\cal I}_{s-m}\right)$, where ${\cal I}_{s-m}$ is the complexity of computing a rank $s-m$ projection matrix.  Clearly, for $m$ large, $\SW^m$ is not tractable in its na\"{i}ve form.  In particular, as $m$ increases, the complexity of $\SW^m$ approaches the complexity of the exponential search algorithm that searches among all possible supports of size $s$.

\subsection{Comparison to Other Algorithms}

$\SW$ differs significantly from other greedy algorithms for sparse regression.  When $k$ is known, the main distinctive feature of $\SW$ is that \textit{it always maintains a $k$-sparse estimate of the support}.  Note that the same is true of the computationally intractable exhaustive search algorithm.  Other competing algorithms, such as forward-backward (FoBa) \citep{zhang2011adaptive} and CoSaMP \citep{needell2009cosamp}, usually estimate a sparse vector with higher sparsity level and iteratively remove variables until $k$ variables are selected.  The same is true for multi-stage algorithms \citep{zhang2009some,wasserman2009high, ZhangMulti2010,GeerBuhlZhouAdaptive2011}.  Intuitively, as we shall see in Section~\ref{sec:theory}, by maintaining a support of size $k$, the performance of $\SW$ only depends on correlations among the columns of the matrix $X_{A}$, where $A$ is of size at most $2k$ and includes the true support.  In contrast, for other sparse regression algorithms, $|A| > 2k$. 

\section{Theoretical Analysis of $\SW$}
\label{sec:theory}

In this section, we present the main theoretical results of the paper that identifies the conditions under which $\SW$ performs accurate support recovery.  Section~\ref{sec:esd} analyzes the performance of an exhaustive search decoder to understand the performance guarantees of sparse regression when given no restrictions on the computational complexity.  Section~\ref{subsec:impparam} defines important parameters that are used to state the main theoretical results in Section~\ref{subsec:mainr}.  Section~\ref{subsec:example} presents an example to highlight the advantages of using $\SW$.  Section~\ref{subsec:discussion} presents some general remarks regarding the main results and discusses some extensions.

\subsection{Exhaustive Search Decoder}
\label{sec:esd}

Recall that we seek to estimate the support set $S^*$ such that the observations $y$ is a linear combination of the columns in the matrix $X_{S^*}$.  Before presenting a detailed theoretical analysis of $\SW$, we first study the \textit{exhaustive search decoder} that estimates $S^*$ as follows: 
\begin{align}
\text{(ESD): }& \widehat{S} =\min_{S \in \Omega_k} {\cal L}(S;y,X) \,, \nonumber
\end{align}
where $\Omega_k = \{S: S \subset [p], |S| \le k\}$ is the set of all possible supports of size $k$.  Thus, the ESD method searches among \textit{all possible} supports of size $k$ to find the support that minimizes the loss.  In the literature, the ESD method is also referred to as the $\ell_0$-norm solution.  Understanding the performance of ESD is important to analyzing the performance of $\SW$, since, if $S^*$ does \textit{not} minimize the loss, then $\SW$ will most likely not estimate $S^*$ accurately.  
%
%
Before stating the theorem, we define the following two parameters:
\begin{align}
\rho_{k+\ell} &\defn \inf \left\{ \frac{\|X \theta \|_2^2}{n \|\theta\|_2^2} : \|\theta\|_0 \le {k+\ell} \,, S^* \subseteq \supp(\theta) \right\} \label{eq:rhokl}  \,, \\
\beta_{\min} &\defn \min_{i \in S^*} |\beta_i| \,. \label{eq:betamin}
\end{align}
The parameter $\rho_{k+\ell}$ is the eigenvalue of certain diagonal blocks of the matrix $X^T X/n$ of size $k+\ell$ that includes the block $X_{S^*}^T X_{S^*}/n$.  The parameter $\beta_{\min}$ is the minimum absolute value of the non-zero entries in $\beta^*$.  Both $\rho_{k+\ell}$ (or its different variations) and 
$\beta_{\min}$ are known to be crucial in determining the performance of sparse regression algorithms.

\begin{proposition}
\label{thm:ESDFirst}
Consider the linear model in (\ref{eq:linmodel}) and suppose (A1)--(A5) holds.  Let $\widehat{S}$ be the ESD estimate with $s = k$ and let $\rho_{2k} > 0$.  If ${n > \frac{4 + \log(k^2 (p-k)) }{ c^2 \beta_{\min}^2 \rho_{2k} }}$, where $0 < c^2 \le 1/(18 \sigma^2)$, then $\Pr(\widehat{S} = S^*) \rightarrow 1$ as $p \rightarrow \infty$.
\end{proposition}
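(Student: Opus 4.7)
The goal is to show that under the stated condition on $n$, the exhaustive search decoder satisfies $\mathcal{L}(S;y,X) > \mathcal{L}(S^*;y,X)$ with high probability for every $S \in \Omega_k \setminus \{S^*\}$. I would start from the identity $\mathcal{L}(S) - \mathcal{L}(S^*) = y^T(\Pi[S^*]-\Pi[S])y$ and substitute $y = X_{S^*}\beta^*_{S^*} + w$. Using $\Pi[S^*] X_{S^*}\beta^* = X_{S^*}\beta^*$ and writing $\mu_S \defn \Pi^{\perp}[S] X_{S^*}\beta^*$, this gives the clean decomposition
\begin{equation*}
\mathcal{L}(S) - \mathcal{L}(S^*) \;=\; \|\mu_S\|_2^2 \;+\; 2\langle \mu_S, w\rangle \;+\; w^T\bigl(\Pi[S^*]-\Pi[S]\bigr)w ,
\end{equation*}
a deterministic signal term plus a linear and a quadratic noise term in $w$. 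The strategy is to show (i) the signal term dominates in expectation and (ii) the noise terms are small uniformly over $S$ via sub-Gaussian concentration and a union bound.

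For the signal lower bound, suppose $|S^* \setminus S| = \ell \ge 1$ (which must hold whenever $S \ne S^*$ and $|S| \le k$). Let $T = S \cup S^*$, so $|T| \le k+\ell \le 2k$. Working inside the $|T|$-dimensional column space of $X_T$, the vector $\mu_S$ is the residual of $X_{S^* \setminus S}\beta^*_{S^* \setminus S}$ after projecting onto $X_S$, and a standard Schur-complement argument shows $\|\mu_S\|_2^2 \ge \ell\, n\, \rho_{k+\ell}\, \beta_{\min}^2 \ge n\, \rho_{2k}\, \beta_{\min}^2$, since $\rho_{k+\ell}$ is non-increasing in $\ell$ for $\ell \le k$ by definition~\eqref{eq:rhokl}. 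So $\|\mu_S\|_2^2$ grows linearly with $n$ with a constant governed by $\rho_{2k}\beta_{\min}^2$, which is exactly the scale appearing in the hypothesis.

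For the noise terms, I would split the target event $\mathcal{L}(S) - \mathcal{L}(S^*) \le 0$ into $\{2\langle \mu_S, w\rangle \le -\tfrac12\|\mu_S\|_2^2\}$ and $\{w^T(\Pi[S]-\Pi[S^*])w \ge \tfrac12\|\mu_S\|_2^2\}$. The first is a sub-Gaussian linear-form tail, bounded by $\exp(-\|\mu_S\|_2^2/(32\sigma^2))$. The second is a quadratic form in $w$ with matrix $\Pi[S]-\Pi[S^*]$ of operator norm at most $1$ and trace zero (for $|S|=k$); a Hanson--Wright style inequality for sub-Gaussian $w$ gives a tail of the form $\exp(-c\|\mu_S\|_2^2/\sigma^2)$ once $\|\mu_S\|_2^2$ exceeds a constant multiple of $\sigma^2$. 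Combining these bounds with the signal lower bound, the failure probability for each fixed $S$ is at most $\exp(-c^2\beta_{\min}^2 \rho_{2k}\, n)$ for a universal $c^2 \le 1/(18\sigma^2)$.

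Finally I would apply a union bound over $S \in \Omega_k \setminus \{S^*\}$. To keep the counting from blowing up to $\binom{p}{k}$, I would group sets by $\ell = |S^* \setminus S|$: the number of size-$k$ sets at overlap level $\ell$ is $\binom{k}{\ell}\binom{p-k}{\ell}$, and the signal bound improves linearly in $\ell$, so the tails decay geometrically fast enough that the total union bound is controlled by the single worst case $\ell=1$, which contributes $k(p-k)$ sets. Undersized sets ($|S|<k$) can be absorbed similarly since dropping variables only increases $\ell$. Putting $\log(k^2(p-k)) + 4$ into the exponent and using the hypothesis $n > (4+\log(k^2(p-k)))/(c^2\beta_{\min}^2\rho_{2k})$ makes the bound go to zero as $p \to \infty$. \textbf{Main obstacle:} obtaining a clean uniform concentration for the indefinite quadratic form $w^T(\Pi[S^*]-\Pi[S])w$ across all $S$ with the right constants, and matching the $k^2(p-k)$ cardinality exactly — this is where grouping by overlap $\ell$ and exploiting the linear-in-$\ell$ signal gain is essential.
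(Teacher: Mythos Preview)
Your proposal is correct and follows essentially the same route as the paper's proof: the same signal--plus--linear-noise--plus--quadratic-noise decomposition of $\mathcal{L}(S)-\mathcal{L}(S^*)$, the same lower bound $\|\mu_S\|_2^2 \ge \ell\, n\, \rho_{2k}\, \beta_{\min}^2$, sub-Gaussian concentration for the two noise terms (the paper uses a dedicated lemma for the difference of two rank-$\ell$ projections rather than generic Hanson--Wright, but the effect is identical), and the same union bound stratified by $\ell = |S^*\setminus S|$ with $N(\ell)=\binom{k}{\ell}\binom{p-k}{\ell}$. The extra factor of $k$ in $k^2(p-k)$ that puzzled you arises because the paper bounds $\sum_{\ell=1}^k(\cdot)$ by $k\cdot\max_\ell(\cdot)$ before taking the worst case $\ell=1$; your sketch would produce the same constant once this step is made explicit.
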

Proposition~\ref{thm:ESDFirst} specifies the scaling on the number of observations $n$ to ensure that ESD estimates the true support in the high-dimensional setting.  The proof of Proposition~\ref{thm:ESDFirst} is outlined in Appendix~\ref{app:ESDFirst}.  The steps in the proof mirror those of the ESD analysis in \citet{wainwright2009information}.  The main difference in the proof comes from the assumption that $X$ is fixed, as opposed to being sampled from a Gaussian distribution.  The dependence on $\rho_{2k}$ in Proposition~\ref{thm:ESDFirst} is particularly interesting, since, to the best of our knowledge, the performance of state-of-the-art computationally tractable methods for sparse regression depend either on certain correlations among the columns of $X$ or on the restricted eigenvalues $\rho_{k+s}$ for some $s > k$.  Since $\rho_{2k} > \rho_{k+s}$ for $s >k$, ESD leads to accurate support recovery for a much broader class of measurement matrices $X$.  However, ESD is computationally intractable, and so it is desirable to devise algorithms that are computationally tractable and offer similar performance guarantees.  

\subsection{Some Important Parameters}
\label{subsec:impparam}


In this section, we collect some important parameters that determine the performance of $\SW$.  
We have already defined the minimum absolute value of the non-zero entries, $\beta_{\min}$, in (\ref{eq:betamin}) and the restricted eigenvalue (RE) $\rho_{k+\ell}$ in (\ref{eq:rhokl}).  Another form of the restricted eigenvalue used in our analysis is defined as follows:
\begin{align}
\rho_{k,\ell} &\defn \inf \left\{ \frac{\|X \theta \|_2^2}{n \|\theta\|_2^2} : \|\theta\|_0 \le {k} \,, |S^* \cap \supp(\theta)| \ge \ell \right\} \label{eq:rhokd} \,.
\end{align}
The parameter $\rho_{k,\ell}$ defined above is the minimum eigenvalue of certain blocks of the matrix $X^T X/n$ of size $k$ that includes the blocks $X^T_{A} X_{A}/n$, where $A$ is a subset of $S^*$ of size at least $\ell$.  It is clear that smaller values of $\rho_{k+\ell}$ or $\rho_{k,\ell}$ correspond to correlated columns in the matrix $X$.

Next, we define two parameters that characterize the correlations between the columns of the matrix $X_{S^*}$ and the columns of the matrix $X_{(S^*)^c}$, where recall that $S^*$ is the true support of the unknown sparse vector $\beta^*$.  For a set $\Omega_{k,d}$ that contains all supports of size $k$ with atleast $k-d$ active variables from $S^*$, define $\gamma_d$ as  
\begin{align}
{\gamma}_d \defn
\max_{S \in {\Omega_{k,d}} \backslash S^*} \min_{i \in (S^*)^c \cap S} 
\frac{\left\| \Sigma_{i,\bar{S}}^{S\backslash i}
\left(\Sigma_{\bar{S},\bar{S}}^{S\backslash i}\right)^{-1} \right\|_1^2}
{\Sigma_{i,i}^{S\backslash i}}  \,, \quad \bar{S} = S^* \backslash S \,,
\label{eq:gammaD}
\end{align}
where $\Sigma^{B} = X^T \Pi^{\perp}[B] X/n$.  The matrix $\Sigma^B$ is the pairwise correlations between vectors that are projected onto the kernel of the matrix $X_{B}$.  When $B = \emptyset$, we write $\Sigma = \Sigma^B$.
Popular sparse regression algorithms, such as the Lasso and the OMP, can perform accurate support recovery when $\zeta = \max_{i \in (S^*)^c} \| \Sigma_{i,S^*} \Sigma_{S^*,S^*}^{-1} \|_1^2 < 1$.  We will show in Section~\ref{subsec:mainr} that $\SW$ can perform accurate support recovery when $\gamma_d < 1$.  Although the form of $\gamma_d$ is similar to $\zeta$, there are several key differences, which we highlight as follows:
\begin{itemize}
\item Since $\Omega_{k,d}$ contains all supports such that $|S^*\backslash S| = d$, it is clear that $\gamma_d$ is the $\ell_1$-norm of a $d \times 1$ vector, where $d \le k$.  In contrast, $\zeta$ is the $\ell_1$-norm of a $k \times 1$ vector.  If indeed $\zeta < 1$, i.e., accurate support recovery is possible using the Lasso, then $\SW$ can be initialized by the output of the Lasso.  In this case, $\gamma_d = 0$ and $\SW$ also outputs the true support as long as $S^*$ minimizes the loss function.  We make this statement precise in Theorem~\ref{thm:EqualCaseOne}.  Thus, it is only when $\zeta \ge 1$ that the parameter $\gamma_d$ plays a role in the performance of $\SW$.

\item The parameter $\zeta$ directly computes correlations between the columns of $X$.  In contrast, $\gamma_d$ computes correlations between  the columns of $X$ when projected onto the null space of a matrix $X_B$, where $|B| = d-1$.  

\item Note that $\gamma_d$ is computed by taking a \textit{maximum} over supports in the set $\Omega_d \backslash S^*$ and a \textit{minimum} over inactive variables in each support.  The reason that the minimum appears in $\gamma_d$ is because we choose to swap variables that result in the minimum loss.  In contrast, $\zeta$ is computed by taking a \textit{maximum} over all inactive variables.  This minimum is what allows $\SW$ to tolerate higher correlations among columns of $X$ when compared to other sparse regression algorithms.

\end{itemize}

In addition to characterizing the performance of $\SW$ using the parameter $\gamma_d$, we also make use of the following parameter:
\begin{align}
\nu_d \defn
\max_{S \in {\Omega_{k,d}} \backslash S^*} \min_{i}
\max_{j,j' } \frac{\left\| \Sigma_{i,\bar{S}_j}^{S\backslash \{i, j\}}
\left(\Sigma_{\bar{S}_j,\bar{S}_j}^{S \backslash \{i,j\}}\right)^{-1} \right\|_1^2 + \left\| \Sigma_{j',\bar{S}_j}^{S\backslash \{i,j\}}
\left(\Sigma_{\bar{S},\bar{S}_j}^{S\backslash \{i, j\}}\right)^{-1} \right\|_1^2}
{\min\left\{ \Sigma_{i,i}^{S\backslash \{i, j\}} , \Sigma_{j',j'}^{S\backslash \{i, j\}} \right\}} \label{eq:nud} \,,
\end{align}
where $i \in  (S^*)^c \cap S$, $j \in S$, $j' \in S^c \cap (S^*)^c$, and $\bar{S} = \{S \backslash i\} \cup \{j\}$.  We will see in Theorem~\ref{thm:optimalsample} that in the noiseless case, i.e., $\sigma = 0$, $\nu_d < 1$ ensures that $\SW$ only swaps an active variable with another active variable or swaps an inactive variable with another inactive variables.  
This enables us to show that the sample complexity of $\SW$ can be at par with that of the exhaustive search decoder.

\subsection{Statement of the Main Results}
\label{subsec:mainr}
In this section, we state the main theoretical results that characterize the performance of $\SW$.  A summary of the results in this section is given as follows:

\begin{itemize}

\item Theorem~\ref{thm:EqualCaseOne} (Section~\ref{subsubsec:one}): Identifies sufficient conditions for accurate support recovery when $\SW$ is initialized by a support that is either equal to the true support or differs from the true support by one variable.  This theorem motivates the use of $\SW$ as a method to boost the performance of any sparse regression algorithm.

\item Theorem~\ref{thm:EqualCaseTwo} (Section~\ref{subsubsec:two}): Identifies sufficient conditions for accurate support recovery when $\SW$ is used with a sparse regression algorithm that outputs a support that differs from the true support by more than one variable.

\item Theorem~\ref{thm:EqualCaseThree} (Section~\ref{subsubsec:three}): Weakens the sufficient conditions in Theorem~\ref{thm:EqualCaseTwo} by assuming certain properties of the initial support.

\item Theorem~\ref{thm:optimalsample} (Section~\ref{subsubsec:four}): Shows that $\SW$ can perform accurate support recovery under the same scaling on the number of observations as the ESD algorithm in Section~\ref{sec:esd}.

\end{itemize}

Throughout this Section, we assume that $\SW$ is initialized by a support $S^{(1)}$ of size $k$ and $\widehat{S}$ is the output of $\SW$.

\subsubsection{Initialization with the True Support $S^*$ }
\label{subsubsec:one}

We claim that $\SW$ can be used as a wrapper around other sparse regression algorithms to boost their performance.  However, for this to be true, it is important to show that $\SW$ does not spoil the results of other sparse regression algorithms.  

\begin{theorem}
\label{thm:EqualCaseOne}
Consider the linear model in (\ref{eq:linmodel}) and suppose (A1)--(A5) holds.  Let the input $S^{(1)}$ to $\SW$ be such that $|S^{(1)}| = k$ and $|S^* \backslash S^{(1)}| \le 1$. If 
\begin{equation}
n > \frac{4 + \log(k^2 (p-k)) }{ c^2 \beta_{\min}^2 \rho_{2k}/2 }, \label{eq:numNf}
\end{equation}
where $0 < c^2 \le 1/(18 \sigma^2)$, then $\P(\widehat{S} = S^*) \rightarrow 1$ as $p \rightarrow \infty$.
\end{theorem}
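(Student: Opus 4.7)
The plan is to reduce the statement to a single uniform comparison event and then dispatch the two initialization cases by a brief deterministic argument. Specifically, define
\[
\mathcal{E} \;=\; \bigl\{\mathcal{L}(S^*;y,X) \,<\, \mathcal{L}(S;y,X) \text{ for every } S \text{ with } |S|=k \text{ and } 1 \le |S^*\setminus S| \le 2 \bigr\},
\]
and show $\Pr(\mathcal{E}) \to 1$ under the hypothesis (\ref{eq:numNf}). The theorem will follow from the structure of the first two iterations of $\SW$.

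On the event $\mathcal{E}$, the two subcases are handled as follows. If $S^{(1)} = S^*$, then every single-swap neighbor of $S^{(1)}$ satisfies $|S^*\setminus S| = 1$, so on $\mathcal{E}$ it has strictly larger loss and $\SW$ terminates at $S^*$ in one iteration. If $|S^*\setminus S^{(1)}| = 1$, write $S^*\setminus S^{(1)} = \{i^*\}$ and $S^{(1)}\setminus S^* = \{j^*\}$; exactly one of the $k(p-k)$ single-swap neighbors of $S^{(1)}$ equals $S^*$ (namely the swap $j^*\leftrightarrow i^*$), while every other neighbor $S'$ has $|S^*\setminus S'|\in\{1,2\}$ and, on $\mathcal{E}$, strictly larger loss than $\mathcal{L}(S^*;y,X)$. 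Hence the first iteration of $\SW$ lands at $S^*$ and the previous case ensures termination at the second.

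To bound $\Pr(\mathcal{E}^c)$, I fix any admissible $S$ and use $y = X\beta^* + w$ to decompose
\[
\mathcal{L}(S;y,X) - \mathcal{L}(S^*;y,X) \;=\; \|\Pi^{\perp}[S] X\beta^*\|_2^2 \;+\; 2w^\top(\Pi[S^*]-\Pi[S])X\beta^* \;+\; w^\top(\Pi[S^*]-\Pi[S])w.
\]
Writing $X\beta^* = X_{S^*\cap S}\beta^*_{S^*\cap S} + \sum_{i\in S^*\setminus S}\beta^*_i X_i$, the first summand is annihilated by $\Pi^{\perp}[S]$, and a Schur-complement argument based on $S^*\subseteq S\cup S^*$ and $|S\cup S^*|\le k+2\le 2k$ (so that the minimum eigenvalue of $X_{S\cup S^*}^\top X_{S\cup S^*}/n$ is at least $\rho_{2k}$) yields the signal bound
\[
\|\Pi^{\perp}[S] X\beta^*\|_2^2 \;\ge\; |S^*\setminus S|\cdot n\beta_{\min}^2 \rho_{2k} \;\ge\; n\beta_{\min}^2 \rho_{2k}.
\]
Standard sub-Gaussian concentration for the linear noise term together with a Hanson--Wright-type bound for the (indefinite) quadratic term gives that the combined noise exceeds $\tfrac12 n\beta_{\min}^2\rho_{2k}$ in absolute value with probability at most $\exp\bigl(4 - c^2 n\beta_{\min}^2\rho_{2k}/2\bigr)$, for any $c^2 \le 1/(18\sigma^2)$. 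A union bound over the at most $k(p-k) + \binom{k}{2}\binom{p-k}{2} \le k^2(p-k)^2$ admissible $S$, combined with (\ref{eq:numNf}), gives $\Pr(\mathcal{E}^c) \to 0$.

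The main obstacle is the sharp control of the indefinite quadratic form $w^\top(\Pi[S^*]-\Pi[S])w$, which is handled by a Hanson--Wright inequality exploiting $\|\Pi[S^*]-\Pi[S]\|_F^2 \le 2|S^*\setminus S|$ and operator norm at most $1$; this is precisely the concentration machinery underlying Proposition~\ref{thm:ESDFirst}. The denominator $\rho_{2k}/2$ in (\ref{eq:numNf}), as opposed to $\rho_{2k}$ in Proposition~\ref{thm:ESDFirst}, reflects splitting the signal lower bound equally between a signal-minus-noise slack ($\ge \tfrac12 n\beta_{\min}^2\rho_{2k}$) and the noise tail required to produce the strict inequality defining $\mathcal{E}$.
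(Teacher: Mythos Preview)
Your deterministic reduction is correct and matches the paper's logic: once $S^*$ strictly beats every size-$k$ neighbor, $\SW$ either stays at $S^*$ or moves to $S^*$ in one swap and then terminates. The paper's own proof is even shorter than yours: it simply invokes Proposition~\ref{thm:ESDFirst}, which already guarantees that $S^*$ is the \emph{global} minimizer of the loss over all supports of size $k$ under the same sample-size condition. Your event $\mathcal{E}$ (comparison only against supports at Hamming distance $\le 2$) is contained in the ESD event, so your restriction is unnecessary once Proposition~\ref{thm:ESDFirst} is in hand; the paper never re-derives concentration for this theorem.

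There is a genuine quantitative slip in your last paragraph. You bound the per-set failure probability by $\exp\bigl(4 - c^2 n\beta_{\min}^2\rho_{2k}/2\bigr)$ uniformly in $\ell = |S^*\setminus S|\in\{1,2\}$ and then union-bound over $k^2(p-k)^2$ sets. But hypothesis~(\ref{eq:numNf}) gives $c^2 n\beta_{\min}^2\rho_{2k}/2 > 4 + \log(k^2(p-k))$, which is $4 + 2\log k + \log(p-k)$, whereas your union bound needs $4 + 2\log k + 2\log(p-k)$; the extra $\log(p-k)$ is not supplied. The fix is the one used in the proof of Proposition~\ref{thm:ESDFirst}: retain the factor $\ell$ in both the signal lower bound $\|\Pi^\perp[S]X\beta^*\|_2^2 \ge \ell\, n\beta_{\min}^2\rho_{2k}$ and in the noise tails (the relevant projections have rank $\ell$), so that the failure probability for a distance-$\ell$ set is $O\bigl(e^{-c^2 n\,\ell\,\beta_{\min}^2\rho_{2k}/2}\bigr)$. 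The doubled exponent at $\ell=2$ then absorbs the extra $\binom{k}{2}\binom{p-k}{2}$ factor, and the union bound closes under exactly~(\ref{eq:numNf}). Alternatively, and more in the spirit of the paper, simply cite Proposition~\ref{thm:ESDFirst} and skip the concentration argument altogether.
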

\begin{proof}
If $S^*$ minimizes the loss among all supports of size $k$, then $\SW$ clearly outputs $S^*$ when initialized with a support $S^{(1)}$ such that $|S^{(1)}| = 1$ and $|S^* \backslash S^{(1)}| = 1$.  From Theorem~\ref{thm:ESDFirst}, the conditions stated in the result guarantee that $S^*$ minimizes the loss with high probability.
\end{proof}
Theorem~\ref{thm:EqualCaseOne} states that if the input to $\SW$ falsely detects at most one variable, then $\SW$ is high-dimensional consistent when given a sufficient number of observations $n$.  In particular, the condition on $n$ in (\ref{eq:numNf}) is mainly enforced to guarantee that the true support $S^*$ minimizes the loss function.  This condition is weaker than the sufficient conditions required for computationally tractable sparse regression algorithms.  For example, the method FoBa is known to be superior to other methods such as the Lasso and the OMP.  \citet{zhang2011adaptive} show that FoBa requires $n = \Omega( \log(p)/ (\rho_{k+\ell}^3 \beta_{\min}^2))$ observations for high-dimensional consistent support recovery, where the choice of $\ell$, which is greater than $k$, depends on the correlations among the matrix $X$.  In contrast, the condition in (\ref{eq:numNf}), which reduces to $n =  \Omega( \log(p-k)/ (\rho_{2k} \beta_{\min}^2))$, is weaker since $1/\rho_{k+\ell}^3 < 1/\rho_{2k}$ for $\ell > k$ and $p -k < p$.  This shows that if a sparse regression algorithm can accurately estimate the true support, then $\SW$ does not introduce any false positives and also outputs the true support.  Furthermore, if a sparse regression algorithm falsely detects one variable, then $\SW$ can potentially recover the correct support.  Thus, we conclude that using $\SW$ can only improve the chances of recovering the true support.

\subsubsection{Initialization with an Arbitrary Support}
\label{subsubsec:two}

We now consider the more interesting case when $\SW$ is initialized by a support $S^{(1)}$ that falsely detects more than one variable.  In this case, $\SW$ will clearly need more than one iteration to recover the true support.  Furthermore, to ensure that the true support can be recovered, we need to impose some additional assumptions on the measurement matrix $X$.  The particular condition we enforce depends on the parameter $\gamma_k$ defined in (\ref{eq:gammaD}).  As mentioned in Section~\ref{subsec:impparam}, $\gamma_k$ captures the correlations between the columns of $X_{S^*}$ and the columns of $X_{(S^*)^c}$.  To simplify the statement in the next Theorem, define the function $g(\delta,\rho,c)$ as
\begin{equation}
g(\delta,\rho,c) = (\delta - 1) + 2c(\sqrt{\delta} + 1/\sqrt{\rho}) + 2c^2\,. \nonumber 
\end{equation}
\begin{theorem}
\label{thm:EqualCaseTwo}
Let the input to $\SW$ be such that $|S^{(1)}| = k$ and $|S^* \backslash S^{(1)}| > 1$.  If for a constant $c$ such that $0 < c^2 < 1/(18\sigma^2)$, $g(\gamma_k,\rho_{k,1},c\sigma) < 0$, $\log\binom{p}{k} > 4+\log(k^2(p-k))$, and $n > \frac{2\log\binom{p}{k}}{c^2 \beta_{\min}^2 \rho_{2k}^2}$,
then $\P(\widehat{S} = S^*) \rightarrow 1$ as $p \rightarrow \infty$.
\end{theorem}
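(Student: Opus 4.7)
The plan is to exploit that $\SW$ is a strict descent algorithm on $\mathcal{L}(\,\cdot\,;y,X)$ over the finite set $\Omega_k$, so it terminates at some single-swap local minimum $\widehat{S}$. To conclude $\widehat S = S^*$, I would establish two facts, each with probability tending to one: (i) $S^*$ is the unique minimizer of $\mathcal{L}$ over $\Omega_k$, and (ii) every $S \in \Omega_k \setminus \{S^*\}$ admits a single-variable swap that strictly decreases $\mathcal{L}$. Item (i) is exactly the conclusion of Proposition~\ref{thm:ESDFirst} upgraded uniformly via a union bound over all $\binom{p}{k}$ supports in $\Omega_k$, which enlarges the $\log(k^2(p-k))$ factor in the sample-complexity to $\log\binom{p}{k}$ and accounts for the $\rho_{2k}^2$ in the theorem. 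The side condition $\log\binom{p}{k} > 4 + \log(k^2(p-k))$ simply says that this enlarged requirement dominates the unadjusted ESD bound.

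Item (ii) is the new content and is where $\gamma_k$ and $g$ enter. Fix any $S \in \Omega_k$ with $d := |S^* \setminus S| \ge 1$, and set $\bar S = S^* \setminus S$. I would exhibit an improving swap by choosing $i \in (S^*)^c \cap S$ to attain the inner minimum in \eqref{eq:gammaD} applied to this particular $S$, and then picking $i' \in \bar S$ to be the active variable maximizing the normalized score $(X_{i'}^T \Pi^\perp[S \setminus i]\, y)^2 / (X_{i'}^T \Pi^\perp[S \setminus i]\, X_{i'})$. The rank-one projection identity \eqref{eq:rankone} gives
\begin{equation*}
\mathcal{L}(\{S \setminus i\} \cup i';y,X) - \mathcal{L}(S;y,X) \;=\; \frac{(X_i^T \Pi^\perp[S \setminus i]\, y)^2}{X_i^T \Pi^\perp[S \setminus i]\, X_i} - \frac{(X_{i'}^T \Pi^\perp[S \setminus i]\, y)^2}{X_{i'}^T \Pi^\perp[S \setminus i]\, X_{i'}}.
\end{equation*}
Substituting $y = X_{\bar S}\beta^*_{\bar S} + w$ (valid because $\Pi^\perp[S \setminus i]$ annihilates $X_{S \cap S^*}$) splits this difference into a signal part and a noise part. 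The ratio of the inactive signal to the active signal is, by construction, governed by the quantity inside the $\min$ in \eqref{eq:gammaD}, and hence by $\gamma_d \le \gamma_k$; lower-bounding the active-signal magnitude using $\|\beta^*_{\bar S}\|_\infty \ge \beta_{\min}$ together with the restricted-eigenvalue bound $\rho_{k,1}$ on the relevant submatrix produces the $1/\sqrt{\rho_{k,1}}$ factor. The signal contribution to the loss difference thus has sign governed by $(\gamma_k - 1)$, the leading summand of $g$.

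The noise contribution decomposes into cross terms $X_j^T \Pi^\perp[S \setminus i]\, w$ for $j \in \{i, i'\}$ and a quadratic $w$-term. Because $X$ is deterministic, each cross term is sub-Gaussian with variance proxy $\sigma^2 X_j^T \Pi^\perp[S \setminus i]\, X_j = n\sigma^2 \Sigma^{S \setminus i}_{j,j}$, and standard tail bounds at deviation scale $c\sigma$ produce exactly the linear $2c(\sqrt{\gamma_k} + 1/\sqrt{\rho_{k,1}})$ and quadratic $2c^2$ summands of $g$. The hypothesis $g(\gamma_k, \rho_{k,1}, c\sigma) < 0$ then guarantees that the signal strictly dominates the noise on an event of probability $1 - o(1)$, so the exhibited swap strictly improves $\mathcal{L}(S)$; $\SW$'s actual swap, being optimal among all pairs, improves $\mathcal{L}(S)$ as well, ruling out $S$ as a local minimum. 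A union bound over the $\binom{p}{k}$ supports in $\Omega_k$ (and the $\le k(p-k)$ swap pairs inside each) is absorbed into the same $\log\binom{p}{k}$ factor already demanded by (i). The main obstacle I foresee is the delicate algebraic bookkeeping that ties the loss difference exactly to the definition of $\gamma_d$ — simultaneously (a) producing the $\ell_1$-norm over $\bar S$ that appears in \eqref{eq:gammaD}, (b) invoking $\rho_{k,1}$ at precisely the right place to bound the active signal from below via $\beta_{\min}$, and (c) making the sub-Gaussian concentration uniform across the data-dependent projection $\Pi^\perp[S \setminus i]$ before taking the union bound over $S$.
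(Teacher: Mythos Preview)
Your two-part strategy --- (i) $S^*$ is the unique minimizer of $\mathcal L$ over $\Omega_k$, and (ii) every $S\in\Omega_k\setminus\{S^*\}$ admits a strictly improving swap --- is exactly the paper's approach, and your treatment of (ii) via the rank-one projection identity, the choice of $i$ attaining the inner minimum in \eqref{eq:gammaD}, and sub-Gaussian concentration matches the paper's Appendix~B.

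However, you have the bookkeeping of the sample-size condition backwards. Proposition~\ref{thm:ESDFirst} already \emph{is} the uniform statement that $S^*$ beats every $S\in\Omega_k$; its proof already contains a union bound over $\Omega_k$, and its sample complexity scales as $\log(k^2(p-k))/\rho_{2k}$ --- linear in $\rho_{2k}$, not $\rho_{2k}^2$. No ``upgrading'' of (i) is needed. The $\log\binom{p}{k}$ and the $\rho_{2k}^2$ in the theorem come entirely from (ii): the per-$S$ concentration bound for the swap-improvement event has exponent $c^2 n\beta_{\min}^2\rho_{2k}^2/2$ (the extra factor of $\rho_{2k}$ enters when you lower-bound $\|X_{\bar S}^T\Pi^\perp[S\setminus i]\,X_{\bar S}\beta^*_{\bar S}\|_\infty$ by $n\rho_{2k}\beta_{\min}$, in contrast to the $\|\Pi^\perp[S]\xi\|_2$ bound used in the ESD proof), and the union bound over the $\binom{p}{k}$ candidate supports then forces $n>2\log\binom{p}{k}/(c^2\beta_{\min}^2\rho_{2k}^2)$. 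The side condition $\log\binom{p}{k}>4+\log(k^2(p-k))$ is there so that (ii)'s requirement dominates (i)'s, not the reverse. Finally, no union bound over the $k(p-k)$ swap pairs is needed: since $\mathcal D_S^c$ is the event that \emph{no} swap improves, exhibiting one fixed improving swap suffices, which is what your construction does.
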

Theorem~\ref{thm:EqualCaseTwo} says that if $\SW$ is initialized by \textit{any support} of size $k$, then $\SW$ outputs the true support in the high-dimensional setting as long as $\gamma_k$ and $n$ satisfy the conditions stated in the theorem.  It is easy to see that in the noiseless case, i.e., when $\sigma = 0$, the condition required for accurate support recovery reduces to $\gamma_k < 1$.  As discussed in Section~\ref{subsec:impparam}, there is reason to believe that this condition may be much weaker than the conditions imposed by other sparse regression algorithms.

The proof of Theorem~\ref{thm:EqualCaseTwo}, outlined in Appendix~\ref{app:EqualCaseTwo}, relies on imposing conditions on each support $S \in \Omega_k \backslash S^*$ such that that there exists a swap so that the loss can be decreased.  Clearly, if such a property holds for each support, except $S^*$, then $\SW$ will output the true support since (i) there are only a finite number of possible supports, and (ii) each iteration of $\SW$ results in a different support.  The dependence on $\binom{p}{k}$ in the expression for the number of observations $n$ arises from applying the union bound over all supports of size $k$.

\subsubsection{Initialization with the Output of a Sparse Regression Algorithm}
\label{subsubsec:three}

The condition in Theorem~\ref{thm:EqualCaseTwo} is independent of the initialization $S^{(1)}$, which is why the sample complexity, i.e., the number of observations $n$ required for consistent support recovery, scales as $\log \binom{p}{k}$.  To reduce the sample complexity, we can impose additional conditions on the support $S^{(1)}$ that is used to initialize $\SW$.  One such condition is to assume that $S^{(1)}$ has certain optimality conditions over a subset of the variables from the true support.  In particular, define the event ${\cal E}_{k,d}$ as
\begin{align}
{\cal E}_{k,d} = \left\{ {\cal L}(S^{(1)};y,X) < \min_{S \in {\bar{\Omega}_{k,d}} \backslash S^{(1)}, |S| = k} {\cal L}(S;y,X) \right\} \,, \quad  |S^* \backslash S^{(1)}| = d \,,  \label{eq:ess1}
\end{align}
where $\bar{\Omega}_{k,d}^c = \{S: |S| = k, |S^*\backslash S| \ge d\}$ contains all supports of size $k$ that contain at most $k-d$ active variables from $S^*$.
The event ${\cal E}_{k,d}$ is the set of outcomes for which the loss associated with $S^{(1)}$ is less than the loss associated with all supports that contain a smaller or equal number of active variables than $S^{(1)}$.  If we assume that $\Pr({\cal E}_{S^{(1)}}) = 1$, then all iterations of $\SW$ will lie in the set  $\Omega_{k,d+1}$.  Furthermore, by a simple counting argument, $|\Omega_{k,d+1}| \le \binom{p}{d}^3$.  This leads to the following theorem.

\begin{theorem}
\label{thm:EqualCaseThree}
Let the input to $\SW$ be such that $|S^{(1)}| = k$, $|S^* \backslash S^{(1)}| = d > 1$, and $\Pr({\cal E}_{S^{(1)}}) \rightarrow 1$. If for a constant $c$ such that $0 < c^2 < 1/(18\sigma^2)$, $g(\gamma_{d-1},\rho_{k,1},c\sigma) < 0$, $3\log\binom{p}{d} > 4+\log(k^2(p-k))$, and $n > \frac{6\log\binom{p}{d}}{c^2 \beta_{\min}^2 \rho_{2k}^2}$,
then $\P(\widehat{S} = S^*) \rightarrow 1$ as $p \rightarrow \infty$.
\end{theorem}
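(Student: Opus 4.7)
The proof plan mirrors that of Theorem~\ref{thm:EqualCaseTwo} but exploits the event $\mathcal{E}_{S^{(1)}}$ to restrict the union bound to the much smaller set $\Omega_{k,d-1}$. The key structural observation is that SWAP produces a strictly decreasing sequence of losses, so on the event $\mathcal{E}_{S^{(1)}}$ we have $\mathcal{L}(S^{(t)};y,X) \leq \mathcal{L}(S^{(1)};y,X)$ for every $t \geq 1$, which by the definition of $\mathcal{E}_{S^{(1)}}$ forces $|S^* \setminus S^{(t)}| < d$, i.e., every iterate lies in $\Omega_{k,d-1}$ throughout the entire run of the algorithm. Thus the analysis only needs to control what happens on $\Omega_{k,d-1}$.

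With the iterates confined to $\Omega_{k,d-1}$, the plan is to establish a \textbf{monotone swap} lemma: for every $S \in \Omega_{k,d-1} \setminus S^*$, there exists a swap of some inactive $i \in S \setminus S^*$ with some active $i' \in S^* \setminus S$ such that $\mathcal{L}(\{S\setminus i\}\cup i';y,X) < \mathcal{L}(S;y,X)$ with high probability. This is the analogue of the lemma that underlies Theorem~\ref{thm:EqualCaseTwo}, but now the governing correlation parameter is $\gamma_{d-1}$ rather than $\gamma_k$, since the ``max'' in \eqref{eq:gammaD} ranges only over $\Omega_{k,d-1} \setminus S^*$. I would obtain the lemma by expanding the difference of the two squared-projection losses using the rank-one update \eqref{eq:rankone}, separating the deterministic signal term from the stochastic noise term, and applying a sub-Gaussian tail bound for $w$ projected onto the relevant columns of $X$; the condition $g(\gamma_{d-1},\rho_{k,1},c\sigma) < 0$ is exactly what is needed to drive the resulting upper bound below zero. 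SWAP selects the minimum-loss swap rather than the particular one produced by the lemma, but any strict decrease suffices, and the new iterate automatically remains in $\Omega_{k,d-1}$ by the loss-monotonicity argument above.

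Since $\mathcal{L}$ strictly decreases along the trajectory and $\Omega_{k,d-1}$ is finite, SWAP must terminate at some $\widehat{S} \in \Omega_{k,d-1}$ from which no swap lowers the loss; the monotone swap lemma then forces $\widehat{S} = S^*$, provided $S^*$ itself attains the minimum of $\mathcal{L}$ among candidate supports. The latter is handled by the ESD-style argument in the proof of Proposition~\ref{thm:ESDFirst}, contributing the requirement $3\log\binom{p}{d} > 4+\log(k^2(p-k))$. To convert per-$S$ bounds into a uniform statement, I would union-bound over $\Omega_{k,d-1}$ and its candidate swaps. Using the coarse combinatorial bound $|\Omega_{k,d-1}| \leq \binom{p}{d}^3$ (counting which at most $d-1$ actives to remove and which at most $d-1$ inactives to include), the exponential tail gives $\log|\Omega_{k,d-1}| \lesssim 3\log\binom{p}{d}$, which after tracking constants yields the stated sample complexity $n > 6\log\binom{p}{d}/(c^2 \beta_{\min}^2 \rho_{2k}^2)$, replacing the $\log\binom{p}{k}$ of Theorem~\ref{thm:EqualCaseTwo}.

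The principal obstacle is the concentration step inside the monotone swap lemma: one must control the sub-Gaussian noise contribution to $\mathcal{L}(\{S\setminus i\}\cup i';y,X) - \mathcal{L}(S;y,X)$ uniformly over all $S \in \Omega_{k,d-1}$ and all candidate $(i,i')$, while simultaneously expressing the deterministic part of that difference in terms of the projected correlation matrix $\Sigma^{S\setminus i}$ so that the ``$\min$'' in the definition of $\gamma_{d-1}$ can be invoked. Once this uniform bound is obtained, the remainder of the argument is essentially a combinatorial-and-counting refinement of the proof of Theorem~\ref{thm:EqualCaseTwo}, with $\Omega_k$ replaced by $\Omega_{k,d-1}$ throughout.
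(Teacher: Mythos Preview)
Your approach is the same as the paper's: the paper states that Theorem~\ref{thm:EqualCaseThree} ``follows easily from the proof of Theorem~\ref{thm:EqualCaseTwo}'', the only change being that the event ${\cal E}_{S^{(1)}}$ confines the iterates to a set of cardinality at most $\binom{p}{d}^3$, so the union bound and the correlation parameter shrink from $(\Omega_k,\gamma_k)$ to $(\Omega_{k,d-1},\gamma_{d-1})$.

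One technical point deserves care. Your claim that ``every iterate lies in $\Omega_{k,d-1}$'' fails at $t=1$: by hypothesis $|S^*\setminus S^{(1)}|=d$, so $S^{(1)}\notin\Omega_{k,d-1}$. For $t\ge 2$ your loss-monotonicity argument is correct, since $\mathcal L(S^{(t)})<\mathcal L(S^{(1)})$ strictly and ${\cal E}_{S^{(1)}}$ then rules out $|S^*\setminus S^{(t)}|\ge d$. The consequence is that the monotone-swap lemma governed by $\gamma_{d-1}$ does not cover the very first swap out of $S^{(1)}$; that swap is controlled by the single quantity $\min_{i}\gamma_{i,S^{(1)}}$, which is a term in the maximum defining $\gamma_d$, not $\gamma_{d-1}$. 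You therefore need either an explicit extra hypothesis on $S^{(1)}$, or to absorb this one additional event into the union bound (it contributes a single term, so the counting is unaffected) while noting that the deterministic condition it imposes is not literally $g(\gamma_{d-1},\rho_{k,1},c\sigma)<0$. The paper's own exposition is equally terse here and contains index slips ($\Omega_{k,d+1}$ in the text), so this is a detail of the statement rather than a flaw in your overall strategy.
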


The proof of Theorem~\ref{thm:EqualCaseThree} follows easily from the proof of Theorem~\ref{thm:EqualCaseTwo}.  The main consequence of Theorem~\ref{thm:EqualCaseThree} is that if a sparse regression algorithm can achieve consistent partial support recovery, then the conditions needed for support recovery using $\SW$ are weakened.  Moreover, as the number of active variables in $S^{(1)}$ increases, the sufficient conditions required for accurate support recovery using $\SW$ become weaker since $\gamma_{d-1} \le \gamma_k$ for $d > 1$.

\subsubsection{Achieving the optimal sample complexity}
\label{subsubsec:four}

One drawback of the theoretical analysis presented so far is that the conditions require that the number of observations $n$ scale as $\log\binom{p}{d}$, where $d \le k$.  The reason for the dependence on $d$ is that, once we assume that $\Pr({\cal E}_{S,d}) = 1$, there are $O(\binom{p}{d})$ possible number of supports that $\SW$ can encounter.  To ensure that $\SW$ does not make an error, we use the union bound over all these sets to bound the probability of making an error.  In practice, however, once the support set $S$ is fixed, the total number of possible supports that $\SW$ can visit can be much smaller than $O(\binom{p}{d})$.  The exact number of possible supports will depend on the correlations between the columns of $X$.  In the next theorem, we show that under additional assumptions on $X$, $\SW$ can achieve similar sample complexity as the exhaustive searcher decoder.

\begin{theorem}
\label{thm:optimalsample}
Let $S$ be the input to $\SW$ such that $|S| = k$ and 
$|S^* \backslash S| \le d$.  If for a constant $c$ such that $0 < c^2 < 1/(18\sigma^2)$, $g(\nu_d,\rho_{k-1,0}/2,c\sigma) < 0$, and $n > \frac{2k+\log(k(p-k))}{c^2\beta_{\min}^2 \rho_{2k}^2/4}$,
then $\P(\widehat{S} = S^*) \rightarrow 1$ as $(n,p,k) \rightarrow \infty$.
\end{theorem}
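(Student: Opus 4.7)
The strategy mirrors that of Theorem~\ref{thm:EqualCaseTwo}, but exploits the stronger condition $g(\nu_d, \rho_{k-1,0}/2, c\sigma)<0$ to dramatically restrict the set of supports SWAP can visit. The high-level observation is that while Theorem~\ref{thm:EqualCaseTwo} merely guarantees existence of some loss-decreasing swap, the parameter $\nu_d$ in (\ref{eq:nud}) compares the benefit of a ``good'' swap (removing an inactive $i\in S\cap(S^*)^c$ and inserting an active $j''\in S^*\setminus S$) against the best alternative ``bad'' swap (either ejecting an active variable $j\in S\cap S^*$ or swapping between two inactive variables $i,j'$ without advancing toward $S^*$). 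First I would show that when $g(\nu_d,\rho_{k-1,0}/2,c\sigma)<0$, the greedy minimizer $(\widehat{i},\widehat{i'})$ always corresponds to swapping an inactive variable for an active one, so that $|S^{(r)}\cap S^*|$ is strictly increasing until equality is reached.

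With this monotonicity in hand, a structural lemma follows immediately: if $|S^*\setminus S^{(1)}|\le d$, then on the event that no bad swap ever occurs, SWAP terminates at $S^*$ after at most $d$ iterations, every intermediate $S^{(r)}$ lies in $\Omega_{k,d}$, and from each $S^{(r)}$ only the $O(k(p-k))$ single-swap neighbors are candidates for $S^{(r+1)}$. This is the key combinatorial reduction: instead of union-bounding over the full $\Omega_k$ of size $\binom{p}{k}$ as in Theorem~\ref{thm:EqualCaseTwo}, we union-bound only over the at most $d\cdot k(p-k)\le k^2(p-k)$ candidate swaps that can actually be selected along the (random) trajectory.

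Next, I would carry out the concentration step analogously to the proof of Theorem~\ref{thm:EqualCaseTwo}. Using the rank-one update (\ref{eq:rankone}) applied twice (once for removal of $i$ and once for insertion of $j''$), the loss difference between any two candidate swaps decomposes into a deterministic signal term controlled by the numerator of $\nu_d$ and $\beta_{\min}^2 \rho_{2k}^2/4$, plus a stochastic term that is a quadratic form in the sub-Gaussian noise $w$. The $2k$ additive term in the sample-complexity bound arises from concentrating these quadratic forms on a subspace of dimension at most $2k$ (supports of size $k$ mixed with $S^*$), exactly as in the proof of Proposition~\ref{thm:ESDFirst}, while the $\log(k(p-k))$ term arises from the per-iteration union bound over the restricted candidate set. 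Finally, Proposition~\ref{thm:ESDFirst} (or equivalently Theorem~\ref{thm:EqualCaseOne}) guarantees under the stated $n$ that $S^*$ is a strict minimizer of the loss, so the monotone iteration cannot terminate prematurely at a proper subset of $S^*$.

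The main obstacle I anticipate is the first step: rigorously translating the algebraic inequality $g(\nu_d,\rho_{k-1,0}/2,c\sigma)<0$ into the claim that the greedy swap is always ``active-increasing.'' This requires carefully isolating, inside the quantity $\mathcal{L}(\{S\setminus i\}\cup i';y,X) - \mathcal{L}(\{S\setminus j\}\cup j';y,X)$, the cross-correlation ratios $\|\Sigma_{\cdot,\bar{S}_j}^{S\setminus\{i,j\}}(\Sigma_{\bar{S}_j,\bar{S}_j}^{S\setminus\{i,j\}})^{-1}\|_1^2/\Sigma_{\cdot,\cdot}^{S\setminus\{i,j\}}$ that appear in $\nu_d$, and controlling the residual noise cross-terms by $c\sigma$ with high probability using the same sub-Gaussian tail bounds that underlie $g(\cdot)$ in Theorems~\ref{thm:EqualCaseTwo} and \ref{thm:EqualCaseThree}. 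Once this comparison is established uniformly over $S\in\Omega_{k,d}$ and over the permissible index triples $(i,j,j')$, the remaining counting and concentration arguments are the natural refinement of those already developed for Theorems~\ref{thm:EqualCaseTwo}--\ref{thm:EqualCaseThree}.
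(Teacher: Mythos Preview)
Your proposal contains a genuine gap in the monotonicity claim. You assert that under $g(\nu_d,\rho_{k-1,0}/2,c\sigma)<0$ the greedy swap is always inactive$\to$active, so $|S^{(r)}\cap S^*|$ strictly increases and $r\le d$. But inspect the index ranges in the definition of $\nu_d$ in \eqref{eq:nud}: the ``bad'' competitor has $j\in S$ and $j'\in S^c\cap(S^*)^c$, i.e., it only covers swaps that \emph{insert an inactive variable}. The condition says nothing about the swap $j\in S\cap S^*$, $j''\in S^c\cap S^*$ (eject an active, insert another active). The paper's event ${\cal F}_S$ therefore only guarantees that $|S^{(r)}\cap S^*|$ is \emph{non-decreasing}: each greedy step is either inactive$\to$active or active$\to$active. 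Since active$\to$active swaps do not advance toward $S^*$, you cannot bound the number of iterations by $d$. The paper handles this via a counting lemma: because inactive variables are never introduced, the inactive part of $S^{(r)}$ is always a subset of the original $d$ inactive indices, and for each fixed inactive set of size $d'$ there are $\binom{k}{k-d'}$ ways to fill in the actives; summing gives $r\le 2^k$.

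This error cascades into your explanation of the $2k$ term. It does \emph{not} arise from concentrating quadratic forms on a $2k$-dimensional subspace (the relevant projections $\Theta_S(i,i',j,j')$ have rank at most $2$, not $2k$). It arises from the union bound over the $r\le 2^k$ possible supports along the trajectory: the final probability estimate is $1-\Pr({\cal E}_{S^*}^c)-r\max_S\Pr({\cal D}_S^c)-r\max_S\Pr({\cal F}_S^c)$, and with $\Pr({\cal F}_S^c)\le 4k(p-k)e^{-c^2 n\beta_{\min}^2\rho_{2k}^2/4}$ and $r\le 2^k$ one needs $n$ large enough that $2^k\cdot k(p-k)\cdot e^{-c^2 n\beta_{\min}^2\rho_{2k}^2/4}\to 0$, which is exactly the stated $n>(2k+\log(k(p-k)))/(c^2\beta_{\min}^2\rho_{2k}^2/4)$. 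Your union bound over $d\cdot k(p-k)$ candidates would, if valid, yield a strictly smaller sample complexity than the theorem claims---a signal that the argument is too optimistic.
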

The proof of Theorem~\ref{thm:optimalsample} is similar to the proof of Theorem~\ref{thm:EqualCaseTwo} and is outlined in Appendix~\ref{app:optimalsample}.  The condition $g(\nu_d,\rho_{k-1,0}/2,c\sigma) < 0$ ensures that, when $\SW$ is initialized with an appropriate support $S$ of size $k$, then the $\SW$ iterations only swap an active variable with an active variable or swap an inactive variable with an active variable.  This allows us to upper bound the total number of possible supports that $\SW$ can visit from $O\left(\binom{p}{d}\right)$ to $2^k$.  We note that in numerical simulations, we observed that even when the $\SW$ iterations swapped an inactive variable with an inactive variable, $\SW$ performed accurate support recovery.  Thus, not allowing an inactive variable to be swapped with an inactive variable is rather restrictive.  We believe that that a more involved analysis, that allows a constant number of swaps of an inactive with an inactive, can further weaken the condition in Theorem~\ref{thm:optimalsample} with the number of observations satisfying $n  > \frac{c'k+\log(k(p-k))}{c^2\beta_{\min}^2 \rho_{2k}^2/4}$ for a constant $c'$ that controls the maximum number of inactive with inactive swaps.

\subsection{Example}
\label{subsec:example}

\sloppy

In this section, we present a simple example that highlights the advantages of using $\SW$ to handle correlated measurement matrices.  The particular example we use is motivated from \citet{javanmard2013model}.  Recall the linear model (\ref{eq:linmodel}) and suppose that $\Sigma = X^T X/n$ is given as follows:
\begin{equation}
\Sigma_{i,j} = \left\{
\begin{array}{ll}
1 & i = j \\
a & i = p, \, j \in S^* \text{ or } j = p, \, i \in S^* \\
0 & \text{otherwise}
\end{array} \,,
\right. \label{eq:example}
\end{equation}
where $\Sigma_{i,j}$ refers to the $(i,j)$ entry of the matrix $\Sigma$, $S^* = \{1,2,\ldots,k\}$ is the unknown support that we seek to estimate given $y$ and $X$, and $a$ is chosen such that $\Sigma$ is positive definite.  From (\ref{eq:example}), we see that the $p^{\text{th}}$ column of $X$ is correlated with the columns indexed by the true support.  This correlation causes standard sparse linear regression algorithms to estimate the incorrect support.  For example, a simple calculation shows that the Lasso can only estimate $S^*$ accurately when $a \in [0, 1/k)$.  The calculations in \citet{javanmard2013model} show that multi-stage methods can estimate $S^*$ accurately when $a \in [0,1/k) \cup (1/k,1/\sqrt{k}]$.

\fussy

To find the range of values of $a$ for which $\SW$ can accurately estimate $S^*$, we need to compute the parameter $\gamma_k$ defined in (\ref{eq:gammaD}).  In particular, as shown in Theorem~\ref{thm:EqualCaseTwo}, when $\sigma = $, $\SW$ can perform accurate support recovery as long $\gamma_k < 1$.  A simple calculation shows that $\gamma_k = 0$.  This means that $\SW$ can estimate the true support as long $a \ge 0$ and $a$ is chosen such that $\Sigma$ is positive definite.  

The above example also demonstrates how $\SW$ can be used with other sparse regression algorithms.  In particular, if $a \in [0,1/k)$, then Lasso outputs the true support.  In this case, $\SW$ will also return the true support.  However, when $a > 1/k$, then Lasso is no longer accurate.  In this case, $\SW$ can be used with the output of the Lasso algorithm to estimate the true support.  As we shall see in the numerical simulations, one of the main advantages of using $\SW$ with other sparse regression algorithms is that the number of iterations needed for $\SW$ to converge can be significantly reduced.

\subsection{Discussion}
\label{subsec:discussion}

In this section, we make some general remarks regarding the theoretical results and discuss some possible extensions of our results.

\begin{remark}
\textbf{(Unknown Sparsity Level)}
In all the results, we assumed that $k$ is known.  However, in practice, $k$ is typically unknown and must be selected using an appropriate model selection algorithm.  The theoretical analysis for the case when $k$ is not known is not within the scope of this paper.  However, we note that recent work has shown that, under appropriate conditions, $k$ can be estimated exactly with high probability by computing the solution path of any high-dimensional consistent sparse regression algorithm \citep{VatsBaraniuk2014}.
\end{remark}

\begin{remark}
\textbf{(Number of $\SW$ Iterations)}
Our theoretical results specified sufficient conditions on the scaling of the number of observations and sufficient conditions on the measurement matrix for accurate support recovery.  An open question is to study the number of iterations required by $\SW$ to converge.
\end{remark}

\sloppy 
\begin{remark}
\textbf{(Random Measurement Matrices)}
In all the results, we assumed that the measurement matrix is deterministic.  
An interesting extension will be to study the case when the rows of $X$ are sampled from a random vector.  Such an analysis will have applications in compressed sensing \citep{donoho2006compressed} and Gaussian graphical model selection \citep{MeinshausenBuhl2006}.
\end{remark}

\fussy

\begin{remark}
\textbf{(High Correlations)}
Our results do not explicitly take into account the correlations among the columns of $X$, but simply improve upon the conditions required by other sparse regression algorithms.  We note that in settings where exact support recovery is not feasible, we cannot expect $\SW$ to output the true support.  This may happen, for example, when the conditions in Theorem~\ref{thm:ESDFirst} are not satisfied.  In such cases, it may be desirable to select a superset of the true support instead of the true support.  Our future work will study how $\SW$ can be used for this problem by using $\SW$ in conjunction with algorithms, such as those proposed in \citet{buhlmann2012correlated}, to select an appropriate superset of the true support.
\end{remark}

\section{Numerical Simulations}
\label{sec:sims}

In this section, we illustrate the performance of $\SW$ when initialized by several popular sparse regression algorithms.  Section~\ref{subsec:syn} presents synthetic data results and Section~\ref{subsec:real} presents pseudo real data results.

\subsection{Synthetic Data}
\label{subsec:syn}

To illustrate the advantages of $\SW$, we use the following two examples:

\begin{enumerate}[({E}1)]
\item We sample the rows of $X$ from a Gaussian distribution with mean zero and covariance $\Sigma$.  The covariance $\Sigma$ is block-diagonal with blocks of size $p/k$.  The entries in each block $\bar{\Sigma}$ are specified as follows: 
$\bar{\Sigma}_{ii} = 1$ for $i \in \{1,\ldots,p/k\}$ and $\bar{\Sigma}_{ij} = a$ for $i \ne j$.  This construction of the measurement matrix is motivated from \citet{buhlmann2012correlated}.  The true support is chosen such that each variable in the support is assigned to a different block.  The non-zero entries in $\beta^*$ are chosen to have magnitude one with sign randomly chosen to be either positive or negative (with equal probability).  We let $\sigma = 1$, $p = 1000$, $n \in \{100, 150,200,250,\ldots,500\}$, $k = 20$, and 
$a \in [0.6,0.99]$.
\item We sample $X$ from the same distribution as described in (E1).  The only difference is that the true support is chosen such that five different blocks contain active variables and each chosen block contains \textit{four} active variables.  The rest of the parameters are also the same.
\end{enumerate}
In both (E1) and (E2), as $a$ increases, the strength of correlations between the columns increase.  Furthermore, from the construction, it is clear that the restricted eigenvalue parameter for (E1) is, in general, greater than the restricted eigenvalue parameter of (E2).  Thus, (E1) requires a smaller number of observations than (E2) for accurate sparse regression.

\subsubsection{Sparse Regression Algorithms}
We use the following sparse regression algorithms to initialize $\SW$: 

\begin{itemize}

\item Lasso \citep{TibshiraniLasso1994}

\item  Thresholded Lasso (TLasso) \citep{GeerBuhlZhouAdaptive2011}

\item Forward-Backward (FoBa) \citep{zhang2011adaptive}

\item  CoSaMP \citep{needell2009cosamp}

\item  Marginal Regression (MaR)

\item  Random

\end{itemize}

TLasso is a two-stage algorithm where the first stage applies Lasso and the second stage selects the top $k$ largest (in magnitude) variables from the Lasso estimate.  In our implementation, we applied Lasso using $5$-fold cross-validation.  FoBa uses a combination of a forward and a backwards algorithm.  CoSaMP is an iterative greedy algorithm.  MaR selects the support by choosing the largest $k$ variables in $|X^T y|$.  Random selects a \textit{random} subset of size $k$.  We use the notation S-TLasso to refer to the algorithm that uses TLasso as an initialization for $\SW$.  A similar notation follows for other algorithms.  Finally, when running the sparse regression algorithms, we assume that $k$ is known.  In practice, model selection algorithms can be used to select an appropriate $k$.  Since our main goal is to illustrate the performance of $\SW$, and thereby validate our theoretical results in Section~\ref{sec:theory}, we only show results for the case when $k$ is assumed to be known.  Finally, all our results are reported over $100$ trials.

\subsubsection{Dependence on the Degree of Correlation}
Figures~\ref{fig:tpr_E1_Corr_Plot} and \ref{fig:tpr_E2_Corr_Plot} plot the mean TPR for (E1) and (E2), respectively, as the parameter $a$ increases from $0.5$ to $0.99$.  The dashed lines correspond to a standard sparse regression algorithm, while the solid lines correspond to a $\SW$ based algorithm.  In most cases, we see that, for the same color, the solid lines lie above the dashed lines.  This shows, as predicted by our theory, that $\SW$ is able to improve the performance of other regression algorithms.  Furthermore, we observe that TLasso has the best performance among all algorithms, and S-TLasso improves the performance of TLasso.  However, the computational complexity of TLasso is higher than that of other algorithms since it requires two stages of model selection.

As expected, the performance of the algorithms degrades as the correlations increase.  Moreover, when the correlations are extremely high, then the difference between TLasso and S-TLasso is insignficant.  This suggests that for such cases, it might be more suitable to use sparse regression algorithms, such as those in \citet{zou2005regularization, she2010,grave2011trace,huang2011sparse,buhlmann2012correlated}, which are designed to estimate a superset of the true support.

Figures~\ref{fig:diff_tpr_E1_Corr_Plot} and \ref{fig:diff_tpr_E2_Corr_Plot} show boxplots of the difference between the TPR of a $\SW$ based algorithm minus the TPR of a standard algorithm.  A positive difference means that $\SW$ is able to improve the performance of a sparse regression algorithm.  For Lasso, FoBa, and CoSaMP, we see that the difference is generally positive, even for large values of $a$.  For TLasso, we see that the difference is generally positive for $a = 0.90$ and $a = 0.93$.  For greater values of $a$, there does not seem to be any advantages of using $\SW$.  This is likely due to the correlations being extremely high so that the true support no longer minimizes the loss function.

\subsubsection{Number of Iterations and Dependence on the Number of Observations}
Figure~\ref{fig:numiter_E1E2} plots the mean number of iterations required by the $\SW$ based algorithms as the parameter $a$ varies from $0.55$ to $0.99$.  As expected, the number of iterations generally increases as the correlations among the columns increase.  We also observe that FoBa and TLasso require the smallest number of iterations to converge.  This is primarily because both these algorithms are able to estimate a large fraction of the true support.

Figure~\ref{fig:numN_E1E2} plots the mean TPR for (E1) and (E2) when $a = 0.9$ and the number of observations vary from $100$ to $500$.  We clearly see that $\SW$ based algorithms outperform the standard algorithms.  For simplicity, we only plot results for TLasso and FoBa.  

\begin{figure}
\begin{center}
\includegraphics[scale=0.4]{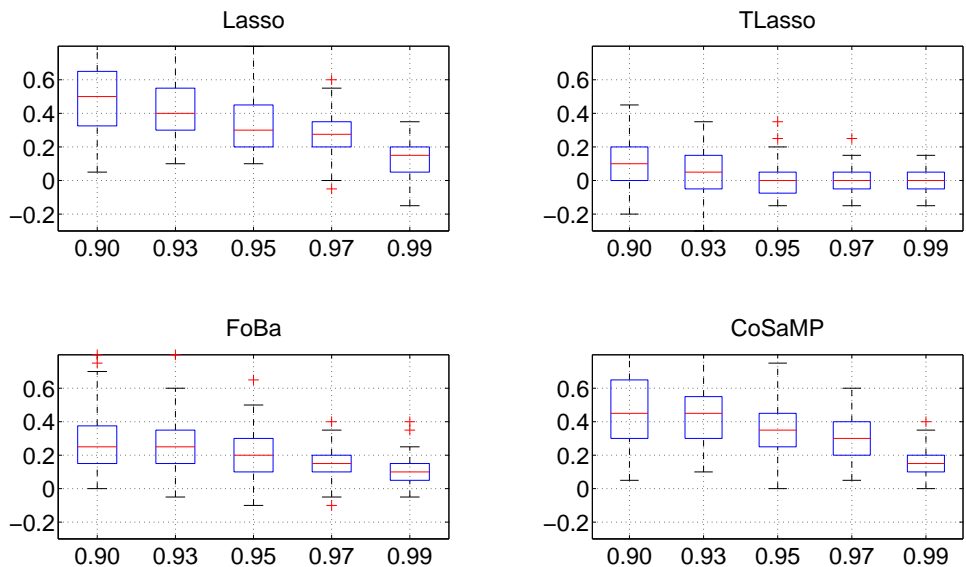}
\end{center}
\caption{Results for measurement matrix (E1).  Box plots of the TPR of $\SW$ based algorithms minus the TPR of a standard regression algorithm for five different values of the degree of correlation $a$.  For example, top left is the TPR of $\SW$ based Lasso minus the TPR of Lasso.}
\label{fig:diff_tpr_E1_Corr_Plot}
\end{figure}

\begin{figure}
\begin{center}
\vspace{-0.2cm}
\includegraphics[scale=0.8]{Legend_New.eps}
\vspace{-0.7cm}
\end{center}
\begin{center}
\includegraphics[scale=0.46]{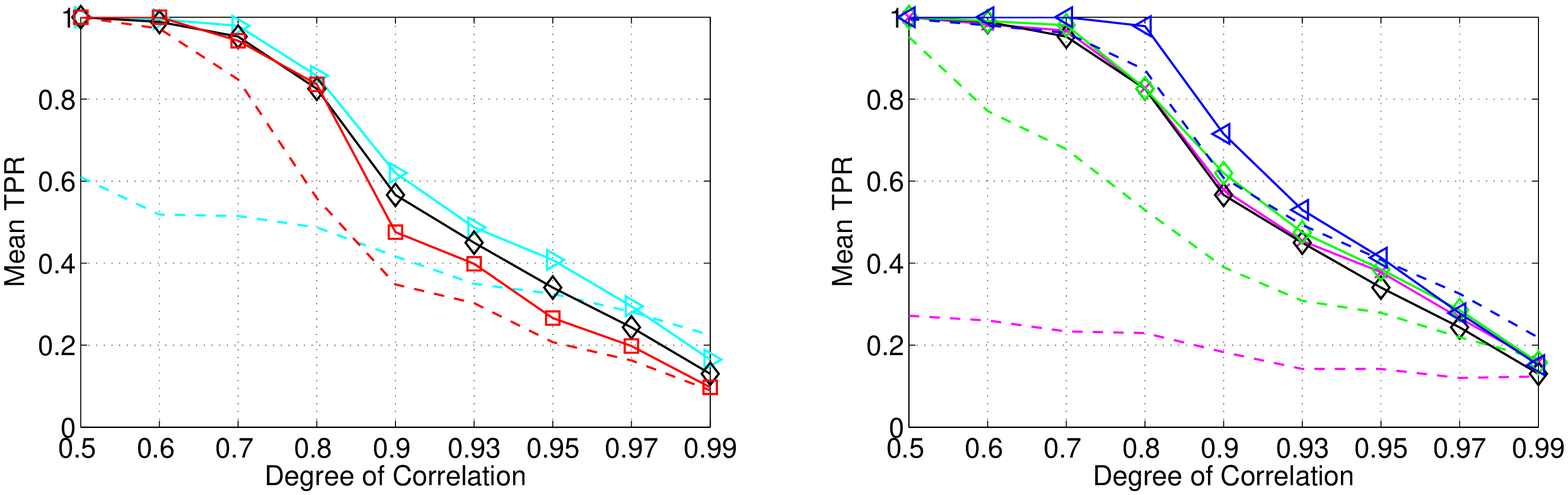}
\end{center}
\caption{ Results for measurement matrix  (E2). Mean true positive rate (TPR) versus the degree of correlation (the parameter $a$) for several different sparse regression algorithms.  The dashed lines correspond to standard sparse regression algorithms, while the solid lines with markers correspond to $\SW$ based regression algorithms. }
\label{fig:tpr_E2_Corr_Plot}
\begin{center}
\includegraphics[scale=0.4]{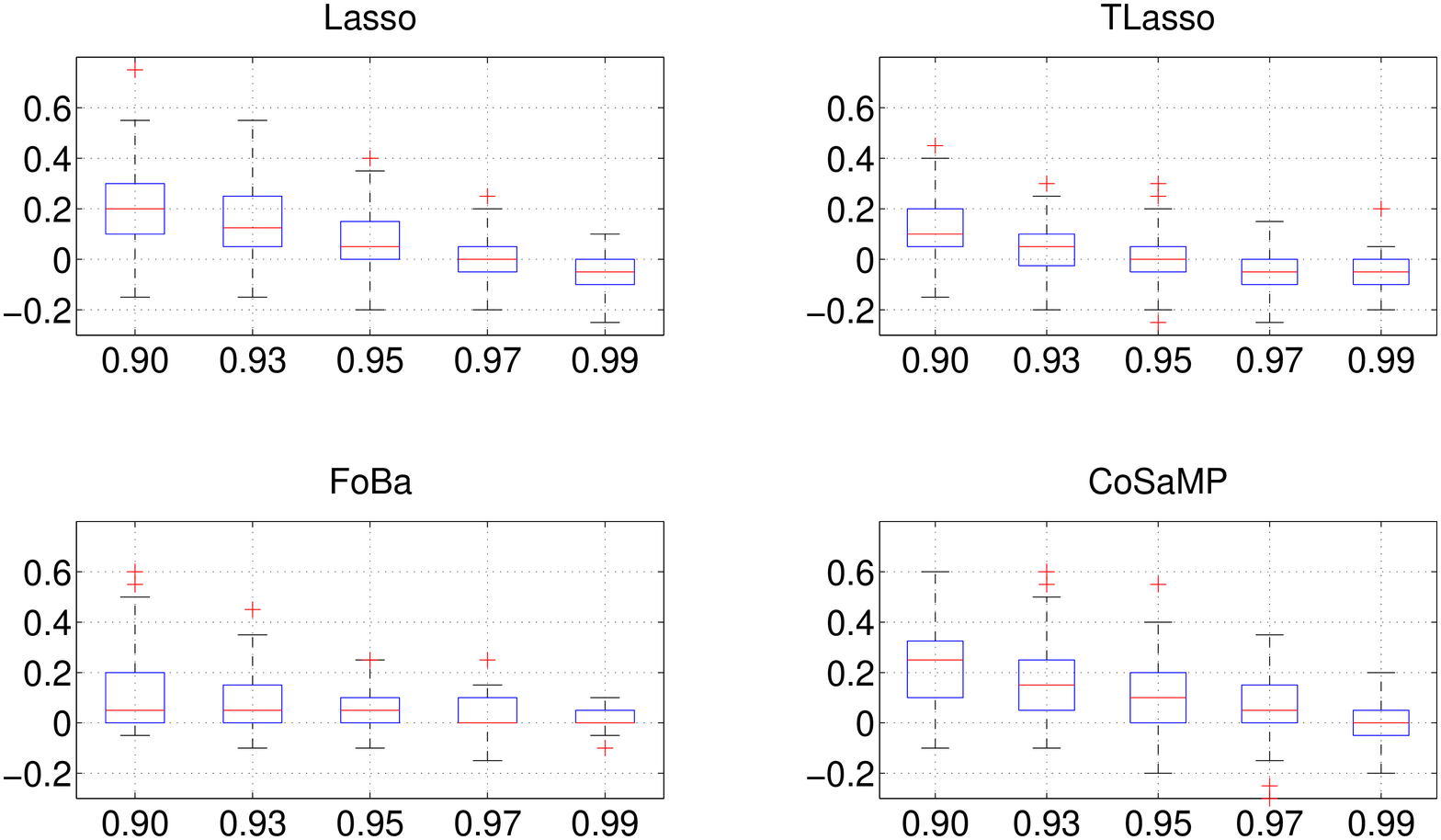}
\end{center}
\caption{Results for measurement matrix (E2). Box plots of the TPR of $\SW$ based algorithms minus the TPR of a standard regression algorithm for five different values of the degree of correlation $a$.  For example, top left is the TPR of $\SW$ based Lasso minus the TPR of Lasso.}
\label{fig:diff_tpr_E2_Corr_Plot}
\end{figure}

\begin{figure}
\begin{center}
\includegraphics[scale=0.46]{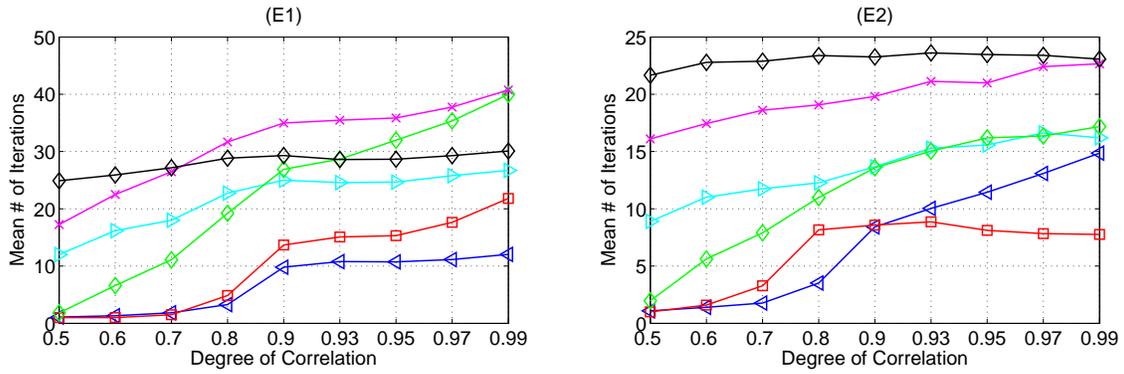}
\caption{Mean number of iterations required by $\SW$ when used with different sparse regression algorithms for the synthetic examples (E1) and (E2).  Recall that $p = 1000$, $n = 200$, and $k = 20$.  See Figure~\ref{fig:tpr_E2_Corr_Plot} for legend.}
\label{fig:numiter_E1E2}
\end{center}
\end{figure}

\begin{figure}
\begin{center}
\includegraphics[scale=0.4]{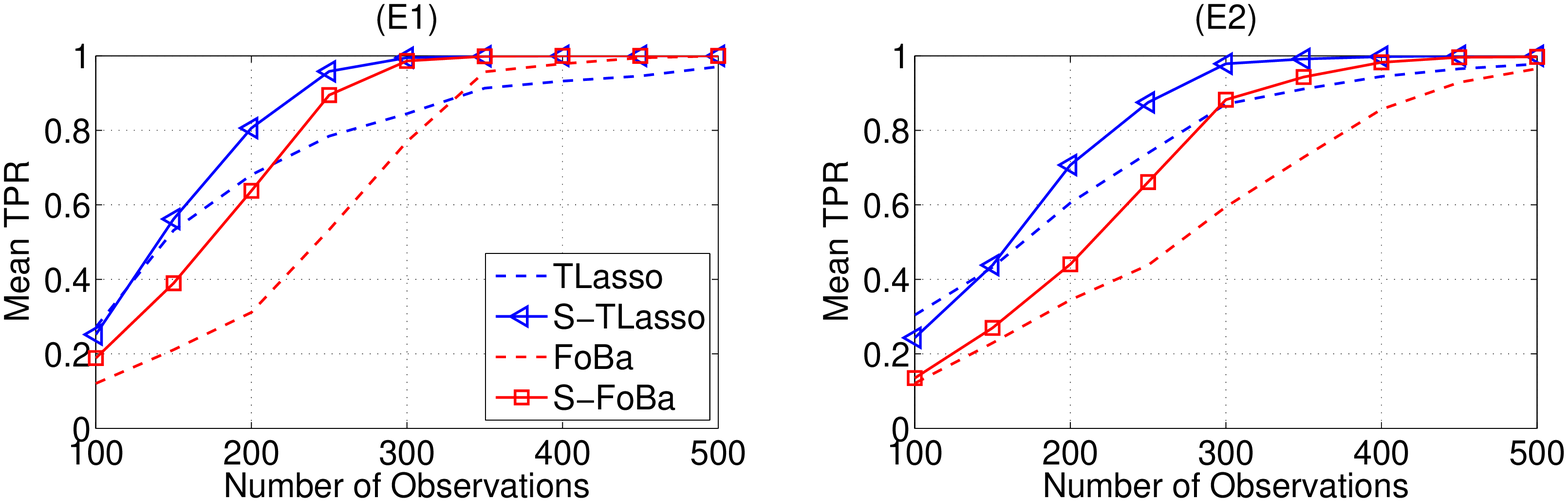}
\caption{Mean TPR versus the number of observations $n$ for (E1) and (E2).}
\label{fig:numN_E1E2}
\end{center}
\end{figure}

\subsection{Pseudo Real Data}
\label{subsec:real}

\sloppy 
In this section, we present results for the case when the measurement matrix $X$ corresponds to gene expression data.  Since we do not have any ground truth, we simulate $\beta^*$ and $y$.  For this reason, we refer to this simulation as pseudo real data.  The two simulation settings are as follows:
\begin{itemize}
\item Leukemia: This dataset contains $5147$ gene expression values from $72$ patients \citep{golub1999molecular}.  For computational reasons, we select only $p = 2000$ genes to obtain a $72 \times 2000$ measurement matrix $X$.  We let $k = 10$, $\beta_{\min} = 4$, and $\sigma = 1$.  As seen in Section~\ref{subsec:syn},
the choice of the support plays an important role in determining the performance of a sparse regression algorithm.  To select the support, we cluster the columns using kmeans to identify $20$ clusters.  Next, we obtain the support by random selecting five clusters and then selecting exactly two variabes from each cluster,

\item Prostate Cancer: This dataset contains $12533$ gene expression values from $102$ patients \citep{singh2002gene}.  The selection of $X$, $\sigma$, and $\beta_{\min}$ is the same as in the Leukemia data.  The only difference is that $k = 15$ and the selected support contains three variables from one cluster (as opposed to the two chosen in the Leukemia data).
\end{itemize}

\fussy 

Figure~\ref{fig:solutionPath} plots the mean TPR over $100$ realizations versus the sparsity level of the estimated support for several sparse regression algorithms.  In the figures, we also compare to the elastic net (ENET) algorithm \citep{zou2005regularization}, which is known to be suitable for regression with correlated variables.  Note that ENET requires two regularization parameters.  In our simulations, we run ENET for a two-dimensional grid of regularization parameters and select a support for each sparsity level that results in the smallest loss.  We only compare ENET to TLasso and FoBa, since we know from Section~\ref{subsec:syn} that these algorithms are superior to other regression algorithms in this situation.  From the figures, it is clear that, after a certain sparsity level, the $\SW$-based algorithms perform better than non-$\SW$-based algorithms.  Furthermore, we clearly see that $\SW$ performs significantly better than ENET.

\begin{figure}
\begin{center}
\includegraphics[scale=0.4]{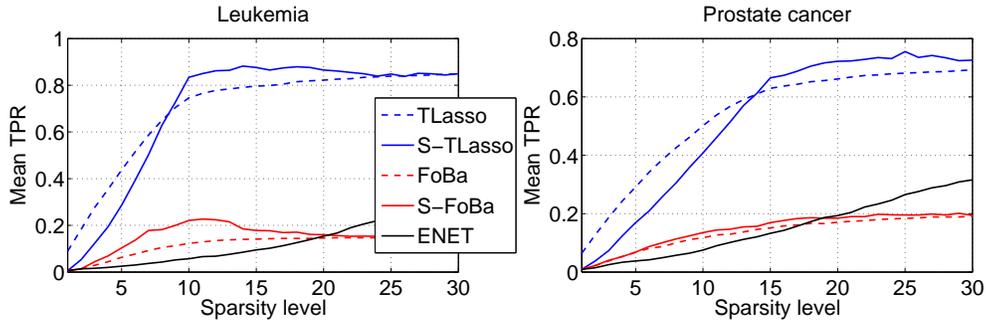}
\caption{Mean TPR as the sparsity level of the estimated support increases for the Leukemia gene expression data (left) and the prostate cancer gene expression data (right).  For the leukemia data, the true sparsity level is $k = 10$.  For the prostate cancer data, the true sparsity level is $k = 15$.}
\label{fig:solutionPath}
\end{center}
\end{figure}

\section{Conclusions}
\label{sec:summary}

Sparse regression is an important tool for analyzing a wide array of high-dimensional datasets.  However, standard computationally tractable algorithms can output erroneous results in the presence of correlated measurements.  In this paper, we developed and analyzed a simple sparse regression algorithm, called $\SW$, that can be used with any regression algorithm to boost its performance.  The main idea behind $\SW$ is to iteratively swap variables, starting with an initial estimate of the support, until a loss function cannot be reduced any further.  We have theoretically justified the use of $\SW$ with other regression algorithms and quantified the conditions on the measurements that guarantee accurate support recovery.  Using numerical simulations on synthetic and real gene expression data, we have shown how $\SW$ boosted the performance of several state-of-the-art sparse regression algorithms.  Our work in this paper motivates several interesting extensions of $\SW$, some of which were discussed in Section~\ref{subsec:discussion}.  Two interesting extensions of $\SW$, not discussed in Section~\ref{subsec:discussion}, are to extend $\SW$ for logistic regression and to extend $\SW$ for estimating structured sparse vectors.

\section*{Acknowledgements}

Thanks to Aswin Sankaranarayanan, Christoph Studer, and Eric Chi for valuable feedback and discussions.  This work was supported by an Institute for Mathematics and Applications (IMA) Postdoctoral Fellowship and by the Grants NSF IIS-1124535, CCF-0926127, CCF-1117939; ONR N00014-11-1-0714, N00014-10-1-0989; and ARO MURI W911NF-09-1-0383.

\appendix


\section{Proof of Proposition~\ref{thm:ESDFirst}}
\label{app:ESDFirst}

Recall that ${\cal L}(S;y,X) = \| \Pi^{\perp}[S]y\|_2^2$.  Analyzing the exhaustive search decoder (ESD) for $s = k$ is equivalent to finding conditions under which the following holds:
\begin{equation}
|| \Pi^{\perp}[S^*] y ||_2^2 < \min_{S \in \Omega_k \backslash S^*} || \Pi^{\perp}[S] y ||_2^2 \,. \label{eq:main}
\end{equation}
Using the properties of the orthogonal projection, it is easy to see that $|| \Pi^{\perp}[S^*] y ||_2^2 = || \Pi^{\perp}[S^*] w ||_2^2$.  Furthermore, we have
\[
|| \Pi^{\perp}[S] y ||_2^2 = \xi^T \Pi^{\perp} [S] \xi + w^T \Pi^{\perp}[S] w + 2 \xi^T \Pi^{\perp}[S] w \,,
\]
where $\xi = X \beta^*$.  
Substituting the above into (\ref{eq:main}), we have that (\ref{eq:main}) is equivalent to showing that the following holds:
\[
 \min_{S \in \Omega_k \backslash S^*}  \left[
\underbrace{\xi^T \Pi^{\perp}[S] \xi + w^T ( \Pi^{\perp}[S] - \Pi^{\perp}[S^*]) w + 2 \xi^T \Pi^{\perp}[S] w}_{{\cal W}(S)}
\right] > 0 \,.
\]
To find conditions under which the above holds, 
we first lower bound ${\cal W}(S)$.  Using properties of projection matrices, and using arguments in \citet{wainwright2009information}, $\Pi^{\perp}[S] - \Pi^{\perp}[S^*]$ is a difference of two rank $\ell = |S^* \backslash S| = |S \backslash S^*|$ projection matrices.  Using properties of sub-Gaussian random vectors in Lemma~\ref{lemma:secondSG} and Lemma~\ref{lemma:thirdSG}, we have
\[
\P\left(
|w^T ( \Pi^{\perp}[S] - \Pi^{\perp}[S^*]) w| > 4 f_n \ell \sigma^2
\right) \le 2 e^{-\ell f_n/2} \,, \quad f_n \ge 1 \,,
\]
\[
\P\left( | \xi^T \Pi^{\perp}[S] w| \ge \delta_n \right) \le 2 e^{-\delta_n^2 / (2 || \xi^T \Pi^{\perp}[S] ||_2^2 \sigma^2)} \,.
\]
Using the above tail inequalities, we can write down a lower bound for ${\cal W}(s)$ such that
\[ {\cal W}(s) \ge || \Pi^{\perp}[S] \xi||_2^2 \left[
1 - \frac{4 \sigma^2 \ell f_n}{||\Pi^{\perp}[S] \xi||_2^2}
- \frac{ 2\sigma \delta_n}{||\Pi^{\perp}[S] \xi||_2}
\right] \,, \]
holds with probability at least $1 - 2 e^{-\ell f_n/2} - 2 e^{-\delta_n^2/2}$.  Using properties of the eigenvalues, we know that $|| \Pi^{\perp}[S] \xi ||_2^2 \ge \ell n\rho_{2k} \beta_{\min}^2$, where $\rho_{2k}$ is defined in \eqref{eq:rhokl}.  Thus, we have the following lower bound:
\[
{\cal W}(s) \ge  || \Pi^{\perp}[S] \xi||_2^2 \left[
1 - \frac{4 \sigma^2  f_n}{n \rho_{2k} \beta_{\min}^2}
- \frac{ 2 \sigma \delta_n}{\sqrt{n \ell \rho_{2k}} \beta_{\min}}
\right] \,, \]
which holds with probability at least $1 - 2 e^{-\ell f_n/2} - 2 e^{-\delta_n^2/2} $.
Choosing $f_n = \delta_n^2/\ell$,
\[
{\cal W}(s) \ge  || \Pi^{\perp}[S] \xi||_2^2 \left[
1 - \frac{2 \sigma^2 \delta_n^2}{n \ell \rho_{2k} \beta_{\min}^2}
- \frac{ 2\sigma \delta_n}{\sqrt{n \ell \rho_{2k}} \beta_{\min}}
\right]   \,, \]
with probability at least $1 - 4 e^{-\delta_n^2/2}$.
Choosing $\delta_n^2 = c^2 n \ell \rho_{2k} \beta_{\min}^2$,
\[
{\cal W}(s) \ge  || \Pi^{\perp}[S] \xi||_2^2 \left(
1 - 2 c^2 \sigma^2 -2 c \sigma
\right) \,. \]
with probability at least $1 - 4 e^{ - c^2 n \ell \rho_{2k} \beta_{\min}^2/2 }$.
Now, if $c$ is chosen such that $(1 - 2 (c\sigma)^2 - 2c\sigma) > 0$, then
\begin{equation}
\P( {\cal W}(S) < 0) \le 4 e^{-c^2 n \ell \rho_{2k} \beta_{\min}^2 / 2}  \,. \label{eq:ww}
\end{equation}
Recall that we want to find conditions under which $\min_{S \in \Omega_k \backslash S^*} {\cal W}(S) > 0$.  Using standard arguments from probability theory, we have
\begin{eqnarray*}
\P\left( \min_{S \in \Omega_k \backslash S^*} {\cal W}(S) > 0 \right) &=&
\P\left( \bigcap_{S \in \Omega_k \backslash S^*} \{ {\cal W}(S) > 0\} \right) \,,
= 1 - \P\left( \bigcup_{S \in \Omega_k \backslash S^*} \{ {\cal W}(S) \le 0\}\right) \\
&\ge& 1 - \sum_{S \in \Omega_k \backslash S^*} \P({\cal W}(S) \le 0) \,.
\end{eqnarray*}
Let $N(\ell)$ be the number of supports of size $k$ that differ from $S^*$ by $\ell$ variables.  From \citet{wainwright2009information}, we have $N(\ell) = \binom{k}{\ell} \binom{p-k}{\ell}$.  The summation above can now be upper-bounded as follows:
\begin{eqnarray*}
 \sum_{S \in \Omega_k \backslash S^*} \P({\cal W}(S) \le 0) &\le&
 \sum_{\ell=1}^{k} 4 N(\ell) \exp(-c^2 n \ell \rho_{2k} \beta_{\min}^2 / 2) \\
 &\le &  \sum_{\ell=1}^{k} 4 \binom{k}{\ell} \binom{p-k}{\ell}\exp(-c^2 n \ell \rho_{2k} \beta_{\min}^2 / 2) \,, \\
 &\le & 4k \max_{\ell=1,\ldots,k}  \binom{k}{\ell} \binom{p-k}{\ell}\exp(-c^2 n \ell \rho_{2k} \beta_{\min}^2 / 2) \,, \\
 &\overset{(a)}{\le} & 4k \max_{\ell=1,\ldots,k} \left( \frac{k(p-k) e^2}{\ell^2} \right)^{\ell}\exp(-c^2 n \ell \rho_{2k} \beta_{\min}^2 / 2) \,, \\
 &\le & 4k \max_{\ell=1,\ldots,k} \exp\left(
\ell \log(k(p-k)e^2/\ell^2) -c^2 n \ell \rho_{2k} \beta_{\min}^2 / 2
\right) \,, \\
&\le &  \max_{\ell=1,\ldots,k} \exp \left(
\ell ( \log(4k^2(p-k)e^2/\ell^2) - c^2 n \rho_{2k} \beta_{\min}^2 / 2
\right)) \,,
\end{eqnarray*}
where (a) uses the upper bound $\binom{p}{k} = (pe/k)^k$.  We want the above expression to diverge to $0$ as $(n,p,k) \rightarrow \infty$.  For this to happen, it is sufficient to require that
\[
n > \frac{4 + \log(k^2 (p-k)) }{ c^2 \beta_{\min}^2 \rho_{2k}/2  } \,,
\]
where the factor of $4$ comes from $4 > \log(4e^2)$; and recall that $c$ is chosen such that $1 - 2 (c \sigma)^2 - 2c\sigma > 0$.  It is easy to see that is sufficient to choose $c$ such that $0 < c \le 1/(3\sigma) \Longrightarrow 0 < c^2/2 \le 1/(18\sigma^2)$.

\section{Proof of Theorem~\ref{thm:EqualCaseTwo}}
\label{app:EqualCaseTwo}

Let $\Omega_k$ be the set of all supports of size $k$.  Suppose that for a set $S \in \Omega_k$, there exists a variable $i \in S$ that can be swapped with a variable $i' \in S^c$ such that the resulting support, $S^{(i,i')} = \{S \backslash i\} \cup \{i'\}$, leads to a lower loss than the loss associated with the support $S$.  In this case, $\SW$ will not stop and find a suitable swapping to reduce the loss.  If every set $S \in \Omega_k$, except $S^*$, has this property, then $\SW$ will only stop once it reaches~$S^*$.  Thus, a sufficient condition for $\SW$ to output the correct support is for the following two conditions to hold:
\begin{align}
\forall \; S \in \Omega_k,\quad \min_{i \in S \cap (S^*)^c, i' \in S^c \cap S^*} {\cal L}(S^{(i,i')}; y, X) &< {\cal L}(S;y,X) \,,\label{eq:condone}\\
\min_{S \in \Omega_k \backslash S^*} {\cal L}(S;y,X) &> {\cal L}(S^*;y,X) \,. \label{eq:condtwo}
\end{align}
Equation~\eqref{eq:condone} ensures that $\SW$ does not stop for all $S \in \Omega_k \backslash S^*$, and \eqref{eq:condtwo} ensures that $\SW$ stops once the true support has been identified.  
Note that \eqref{eq:condone} ensures that an inactive variable from $S$ may be swapped with an active variable from $S^c$.  
Define the events 
\begin{align*}
{\cal E} &= \{{\cal L}(S;y,X) > {\cal L}(S^*;y,X)\} \,, \\
{\cal D}_S &= \left\{ \min_{i \in S \cap (S^*)^c, i' \in S^c \cap S^*} {\cal L}(S^{(i,i')}; y, X) < {\cal L}(S;y,X) \right\},
\end{align*} 
where $S \in \Omega_{k}$.  The probability of accurate support recovery can be lower bounded as follows:
\begin{align}
\Pr(\widehat{S} = S^*) &\ge \Pr\left(\widehat{S} = S^* | \left\{ \cap_{S \in \Omega_k \backslash S^*} {\cal D}_{S} \right\} \cap {\cal E}\right) \Pr\left( \left\{ \cap_{S \in \Omega_k \backslash S^*} {\cal D}_{S} \right\} \cap {\cal E}\right) \nonumber \,, \\
&= \Pr\left( \left\{ \cap_{S \in \Omega_k \backslash S^*} {\cal D}_{S} \right\} \cap {\cal E}\right) \nonumber \,, \\
&= 1 - \Pr\left( \cup_{S \in \Omega_k\backslash S^*} {\cal D}_{S}^c \right) - \Pr({\cal E}^c) \,.
\label{eq:ttt12}
\end{align}
Theorem~\ref{thm:ESDFirst} identifies conditions under which $\Pr({\cal E}^c) \rightarrow 0$.  Thus, we only need to specify conditions under which $\Pr\left( \cup_{S \in \Omega_k\backslash S^*} {\cal D}_{S}^c \right) \rightarrow 0$.  To do so, we first analyze the event ${\cal D}_S^c$ for a fixed $S$.  Using the definition of the least-squares loss, we have
\begin{align}
{\cal D}_S^c &= \left\{ \min_{i \in S \cap (S^*)^c, i' \in S^c \cap S^*} \| \Pi^{\perp}[S^{(i,i')}] y \|_2^2 - \| \Pi^{\perp}[S] y \|_2^2 > 0 \right\}  \,, \nonumber \\
&= \left\{ \min_{i \in S \cap (S^*)^c, i' \in S^c \cap S^*} \left[ \xi^T \Gamma_S(i,i') \xi + w^T \Gamma_S(i,i') w + 2 \xi^T \Gamma_S(i,i')w \right] > 0 \right\} \label{eq:simplea}\,,
\end{align}
where $\xi = X_{\bar{S}}\beta^*_{\bar{S}}$, $\bar{S} = S^* \backslash S$, $\Gamma_S(i,i') = \Pi[S] - \Pi[A_i,i']$, and $A_i = S \backslash \{i\}$.  Recall that $i \in S^c \cap (S^*)^c$, which leads to the expression in \eqref{eq:simplea}.  Note that the first term in the expression of (\ref{eq:simplea}) is deterministic, while the second and third terms are random (they depend on the noise $w$).
To show that $\Pr({\cal D}_S^c) \rightarrow 0$, we first upper bound $\Pr({\cal D}_S^c)$ and then show that the upper bound converges to $0$.  Using properties of projection matrices, it is easy to see that $\Gamma_S(i,i')$ is a difference of two rank one projection matrices.  Using Lemma~\ref{lemma:secondSG} and Lemma~\ref{lemma:thirdSG}, we have the following tail bounds:
\begin{align}
\Pr\left(|w^T \Gamma_S(i,i') w| \ge 4 f_n^{(i,i')} \sigma^2\right) 
&\le 2 e^{-f_n^{(i,i')}/2},\; f_n^{(i,i') } \ge 1 \,, \label{eq:tail1} \\
\Pr\left( |\xi^T \Gamma_S(i,i') w | \ge 
\sigma \| \xi^T \Gamma_S(i,i')\|_2 \delta_n^{(i,i')}  \right) 
&\le 2 e^{-\left(\delta_n^{(i,i')}\right)^2/ 2 } \,. \label{eq:tail2}
\end{align}
\fussy
To simplify notation, define the events ${\cal E}_1 = \left\{|w^T \Gamma_S(i,i') w| < 4 f_n^{(i,i')} \sigma^2 \right\}$ and ${\cal E}_2 = \left\{ |\xi^T \Gamma_S(i,i') w | < \sigma \| \xi^T \Gamma_S(i,i')\|_2 \delta_n^{(i,i')} \right\}$.  
We emphasize that both ${\cal E}_1$ and ${\cal E}_2$ depend on $S$, $i$, and $i'$.  Using the total probability theorem, we have the following upper bound for $\Pr({\cal D}_S^c)$:
\sloppy
\begin{align}
\Pr({\cal D}_S^c) &\le \min_{i \in S \cap (S^*)^c, i' \in S^c \cap S^*} 
\Pr( \overbrace{\xi^T \Gamma_S(i,i') \xi + w^T \Gamma_S(i,i') w + 2 \xi^T \Gamma_S(i,i')w > 0}^{{{\cal H}}_S^{(i,i')}}) \label{eq:tempc} \,, \\
\Pr({{\cal H}}_S^{(i,i')})&\le \Pr({{\cal H}}_S^{(i,i')} | {\cal E}_1 \cap {\cal E}_2) \Pr({\cal E}_1 \cap {\cal E}_2) + \Pr({{\cal H}}_S^{(i,i')} | {\cal E}_1^c \cup {\cal E}_2^c) \Pr({\cal E}_1^c \cup {\cal E}_2^c) \,, \\
&\le \Pr({{\cal H}}_S^{(i,i')} | {\cal E}_1 \cap {\cal E}_2) + \Pr({\cal E}_1^c) + \Pr({\cal E}_2^c) \,. \label{eq:tempb}
\end{align}
Next, using the definition of ${\cal E}_1$ and ${\cal E}_2$, we have
\begin{align}
\Pr({{\cal H}}_S^{(i,i')} | {\cal E}_1 \cap {\cal E}_2)
\le  \mathbbm{1}_{\R^+}\left( \xi^T \Gamma_S \xi + 4 f_n^{(i,i')} \sigma^2 + 2\sigma \| \xi^T \Gamma_S(i,i')\|_2 \delta_n^{(i,i')} \right) \,, \label{eq:tempa}
\end{align}
where $\mathbbm{1}_{\R^+}(q) = 1$ if $q > 0$ and $\mathbbm{1}_{\R^+}(q) = 0$ if $q \le 0$.  Choosing $f_n^{(i,i')} = (\delta_n^{(i,i')})^2 = \delta_n^2$ (i.e., independent of $i$ and $i'$), and substituting \eqref{eq:tempa} and \eqref{eq:tempb} into \eqref{eq:tempc}, we have the following upper bound for $\Pr({\cal D}_S^c)$:
\begin{align}
\mathbbm{1}_{\R^+}\left( \min_{i,i'} \left(\xi^T \Gamma_S(i,i') \xi + 2 \delta_n^2 \sigma^2 + 2\sigma \| \xi^T \Gamma_S(i,i')\|_2 \delta_n \right) \right)  + 
4 e^{-\delta_n^2/ 2 } \label{eq:tempupper1} \,,
\end{align}
where $i \in S \cap (S^*)^c$, $i' \in S^c \cap S^*$, and we use the tail bounds in \eqref{eq:tail1} and \eqref{eq:tail2}.  In Section~\ref{subsec:appprofeq}, we show that
\begin{align}
\min_{i \in S \cap (S^*)^c,i' \in S^c \cap S^*} &\left(\xi^T \Gamma_S(i,i') \xi + 2\sigma \| \xi^T \Gamma_S(i,i')\|_2 \delta_n + 2\delta_n^2 \sigma^2 \right) \nonumber \\
&\le C \left[ 
(\gamma_k - 1) + \frac{2 \sigma \delta_n}{\sqrt{n}  \rho_{2k} \beta_{\min}} \left(\sqrt{\gamma_k } + 1/\sqrt{\rho_{k,1}}\right) + \frac{2 \delta_n^2 \sigma^2}{n \rho_{2k}^2 \beta_{\min}^2} \right] \,, \label{eq:uppertemp1}
\end{align}
where $C > 0$, $\gamma_k$ is defined in (\ref{eq:gammaD}), and $\rho_{k,1}$ is defined in (\ref{eq:rhokd}).  Choosing $\delta_n = c_1 \sqrt{n} \rho_{2k} \beta_{\min}$, and using the bound in (\ref{eq:uppertemp1}), we further upper bound (\ref{eq:tempupper1}) as
\[
\mathbbm{1}_{\R^+}\left( (\gamma_k - 1) + {2c_1 \sigma} \left(\sqrt{\gamma_k} + 1/\sqrt{\rho_{k,1}}\right) + 2(c_1\sigma)^2\right)  + 
4 e^{-c_1^2 n \beta_{\min}^2 \rho_{2k}^2 / 2 } \,.
\]
If $c_1$ is chosen such that $(\gamma_k - 1) + 2c_1 \sigma \left(\sqrt{\gamma_k} + 1/\sqrt{\rho_{k,1}}\right) + 2(c_1\sigma)^2 \le 0$, then we have 
\[
\Pr({\cal D}_S^c) \le 4 e^{-c_1^2 n \beta_{\min}^2 \rho_{2k}^2 / 2 } \,.
\]
Substituting the above into (\ref{eq:ttt12}) and using the union bound, we have
\[
\Pr(\widehat{S} = S^*) \ge 1 - \binom{p}{k} 4 e^{-c_1^2 n \beta_{\min}^2 \rho_{2k}^2 / 2 } - \Pr({\cal E}^c) \,.
\]
Using Theorem~\ref{thm:ESDFirst}, we know that if $n > \frac{4 + \log(k^2(p-k))}{c^2 \beta_{\min}^2 \rho_{2k}/2}$, where $0 < c^2/2 \le 1/(18 \sigma^2)$, then $\Pr({\cal E}^c) \rightarrow 0$.  From the above expression, it is clear that if $n > \max \left\{ \frac{\log \binom{p}{k}} {c_1^2 n \beta_{\min}^2 \rho_{2k}^2/2}, \frac{4 + \log(k^2(p-k))}{c^2 \beta_{\min}^2 \rho_{2k}/2}\right\}$, then $\Pr(\widehat{S} = S^*) \rightarrow 1$.  Choosing $c_1 = c$ and realizing that $\rho_{2k}^2 < \rho_{2k}$, we obtain the desired result.

\subsection{Proof of Equation~(\ref{eq:uppertemp1})}
\label{subsec:appprofeq}

Using Lemma~\ref{lemma:blockdecomp}, we have
\begin{align}
\Gamma_S(i,i') = \Pi[S] - \Pi[A_i,i'] = \underbrace{\frac{(\Pi^{\perp}[A_i] X_i) (\Pi^{\perp}[A_i] X_i)^T}{\|\Pi^{\perp}[A_i] X_i\|_2^2}}_{P_i} - \underbrace{\frac{(\Pi^{\perp}[A_i] X_{i'}) (\Pi^{\perp}[A_i] X_{i'})^T}{\|\Pi^{\perp}[A_i] X_{i'}\|_2^2}}_{P_{i'}} \,. \nonumber 
\end{align}
Next, we evaluate $\xi^T P_i \xi$:
\begin{align*}
\xi^T P_i \xi &= \frac{(X_i^T \Pi^{\perp}[A_i] \xi)^2}{\| \Pi^{\perp}[A_i] X_i\|_2^2} \\
&\overset{(a)}{=} \frac{(X_i^T \Pi[A_i \cup \bar{S}] \Pi^{\perp}[A_i] \xi)^2}{\| \Pi^{\perp}[A_i] X_i\|_2^2} \,, \\
&\overset{(b)}{=} \frac{(X_i^T X_{A_i\cup \bar{S}} (X_{A_i\cup \bar{S}}^T X_{A_i\cup \bar{S}})^{-1} X_{A_i\cup \bar{S}}^T \Pi^{\perp}[A_i] \xi)^2}{\| \Pi^{\perp}[A_i] X_i\|_2^2} \,, \\
&\overset{(c)}{=} \frac{\left(X_i^T X_{A_i\cup \bar{S}} (X_{A_i\cup \bar{S}}^T X_{A_i\cup \bar{S}})^{-1} \left[0_{n \times |A_i| } \; X_{\bar{S}}\right]^T \Pi^{\perp}[A_i] \xi\right)^2}{\| \Pi^{\perp}[A_i] X_i\|_2^2} \,.
\end{align*}
Recall that $\xi = X_{\bar{S}} \beta^*_{\bar{S}}$.  Step~(a) follows since $\Pi^{\perp}[A_i]\xi$ is in the span of $A_i \cup \bar{S}$.  Step~(b) uses the definition of the projection matrix $\Pi[A_i \cup \bar{S}]$.  Step~(c) uses the fact that $X_{A_i}$ is orthogonal to the projection matrix $\Pi^{\perp}[A_i]$.  The notation $0_{n \times |A_i| }$ refers to a matrix of size $|A_i| \times n$ with all zeros.  We now want to find an upper bound for the last expression above.  Using Holder's inequality, we have
\begin{align}
\xi^T P_i \xi &\le \frac{\left|\left|\left[
X_i^T X_{A_i\cup \bar{S}} (X_{A_i\cup \bar{S}}^T X_{A_i\cup \bar{S}})^{-1} 
\right]_{\bar{S}}\right|\right|_1^2 \cdot 
\| X_{\bar{S}}^T \Pi^{\perp}[A_i] \xi\|^2_{\infty}}{\| \Pi^{\perp}[A_i] X_i\|_2^2} \,,
\label{eq:upperboundxipi}
\end{align}
where the notation $\| [ v_{A_i \cup \bar{S}} ]_{\bar{S}}\|_p$ computes the norm over the variables in $\bar{S}$.  For notational simplicity, define $\gamma_{i,S}$ as follows:
\begin{equation}
\gamma_{i,S} = \frac{\sqrt{n} \left|\left|\left[
X_i^T X_{A_i\cup \bar{S}} (X_{A_i\cup \bar{S}}^T X_{A_i\cup \bar{S}})^{-1} 
\right]_{\bar{S}}\right|\right|_1^2}{ \| \Pi^{\perp}[A_i]X_i\|_2^2 } \,.
\label{eq:gammais}
\end{equation}
Recall that our overall goal is to evaluate $\xi^T \Gamma_S(i,i') \xi$ and $\|\Gamma_S(i,i') \xi\|_2$.  We first find an upper bound for $\|\Gamma_S(i,i')\|_2$:
\begin{align}
\|\Gamma_S(i,i') \xi\|_2 &\overset{(a)}{\le} \|P_i \xi \|_2 + \|P_{i'} \xi\|_2 
\overset{(b)}{\le} \frac{\sqrt{\gamma_{i,S}} \| X_{\bar{S}}^T \Pi^{\perp}[A_i] \xi\|_{\infty}}{\| \Pi^{\perp}[A_i] X_i\|_2} + \frac{|X_{i'}^T \Pi^{\perp}[A_i] \xi|}{ \sqrt{n}} \,, \nonumber \\
&\overset{(c)}{\le} \frac{\sqrt{\gamma_{i,S}} \| X_{\bar{S}}^T \Pi^{\perp}[A_i] \xi\|_{\infty}}{\sqrt{n}} + \frac{\|X_{\bar{S}}^T \Pi^{\perp}[A_i] \xi|^2\|_{\infty}}{\| \Pi^{\perp}[A_i] X_{i'}\|_2} \,, \nonumber \\
&\overset{(d)}{\le} \| X_{\bar{S}}^T \Pi^{\perp}[A_i] \xi\|_{\infty} \left(\frac{\sqrt{\gamma_{i,S}} }{\sqrt{n}} + \frac{1}{\| \Pi^{\perp}[A_i] X_{i'}\|_2}\right) \,, \nonumber \\
&\overset{(e)}{\le} \| X_{\bar{S}}^T \Pi^{\perp}[A_i] \xi\|_{\infty} \left(\frac{\sqrt{\gamma_{i,S}} }{\sqrt{n}} + \frac{1}{\sqrt{n\rho_k}}\right)   \label{eq:xil2} \,.
\end{align}
Step~(a) is due to the triangle inequality.  Step~(b) substitutes expressions for $P_i$ and $P_{i'}$.  Step~(c) follows because $i' \in \bar{S}$.  Step~(d) is simple algebra.  Finally, step~(e) uses the fact that $\|\Pi^{\perp}[A_i]X_{j''}\| \ge \sqrt{n \rho_{k,1}}$ for all ${j''} \in \bar{S}$.  We now bound the minimum value of $\xi^T \Gamma_{S}(i,i') \xi$ for all $i' \in \bar{S}$:
\begin{align}
\min_{i' \in \bar{S}} \xi^T \Gamma_S(i,i') \xi &= \min_{i' \in \bar{S}} \left[\xi^T P_i \xi - \xi^T P_{i'} \xi \right] \,, \nonumber \\
&\overset{(a)}{\le} \frac{\gamma_{i,S} \| X_{\bar{S}}^T \Pi^{\perp}[A_i] \xi\|_{\infty}^2}{n} - \max_{i' \in \bar{S}} \frac{(X_{i'} \Pi^{\perp}[A_i] \xi)^2}{\| \Pi^{\perp}[A_i] X_{i'}\|_2^2} \,, \nonumber \\
&\overset{(b)}{\le} \frac{\gamma_{i,S} \| X_{\bar{S}}^T \Pi^{\perp}[A_i] \xi\|_{\infty}^2}{n} 
- \frac{(X_{\bar{S}} \Pi^{\perp}[A_i] \xi)^2}{\| \Pi^{\perp}[A_i] X_{i'}\|_2^2} \,, \nonumber \\
&\overset{(c)}{\le} \| X_{\bar{S}}^T \Pi^{\perp}[A_i] \xi\|_{\infty}^2 \left[\frac{\gamma_{i,S}}{n} - \frac{1}{n} \right] \label{eq:xigammaxi} \,.
\end{align}
Step~(a) uses the upper bound in (\ref{eq:upperboundxipi}) and the definition of $P_{i'}$.  Step~(b) uses the definition of the $\ell_{\infty}$-norm.  Step~(c) uses some simple algebera and the bound $n \rho_{k} < \| \Pi^{\perp}[A_i] X_{j''} \|_2^2 \le n$ for any $j'' \notin A_i$.
We are now ready to find an upper bound for the expression of interest:

\medskip

\noindent
$\displaystyle{\min_{i \in (S^*)^c \cap S, i' \in \bar{S}} 
\left[ \xi^T \Gamma_{S}(i,i') \xi + 2\sigma \delta_n \| \Gamma_S(i,i') \|_2 + 2\delta_n^2\sigma^2 \right]}$
\begin{align*}
& \overset{(a)}{\le} \min_{i \in (S^*)^c \cap S}\left[
\overbrace{\| X_{\bar{S}}^T \Pi^{\perp}[A_i] \xi\|_{\infty}^2}^{R_i^2} \left[\frac{\gamma_{i,S}}{n} - \frac{1}{n} \right]\!+\!2\sigma \delta_n  \frac{\| X_{\bar{S}}^T \Pi^{\perp}[A_i] \xi\|_{\infty}}{\sqrt{n}} \left(\sqrt{\gamma_{i,S} }\!+\! \frac{1}{\sqrt{\rho_k}}\right) + 2 \delta_n^2 \sigma^2 \right]  \,,\\
& \overset{(b)}{\le} \min_{i \in (S^*)^c \cap S} R_i^2 \left[\left(\frac{\gamma_{i,S}}{n} - \frac{1}{n} \right) + \frac{2\sigma \delta_n}{\sqrt{n} R_i} \left(\sqrt{\gamma_{i,S} } + \frac{1}{\sqrt{\rho_k}}\right) + \frac{2 \delta_n^2 \sigma^2}{R_i^2}  \right] \,,\\
&\overset{(c)}{\le}  \min_{i \in (S^*)^c \cap S} R_{i}^2 \left[
\left(\frac{\gamma_{i,S}}{n} - \frac{1}{n} \right) + \frac{2\sigma \delta_n}{ n \sqrt{n}\rho_{2k} \beta_{\min}} \left(\sqrt{\gamma_{i,S} } + \frac{1}{\sqrt{\rho_k}}\right) + \frac{2 \delta_n^2 \sigma^2}{n^2 \rho_{2k}^2 \beta_{\min}^2} \right] \,,\\
& \overset{(d)}{\le}  C \min_{i \in (S^*)^c \cap S} \left[
\left({\gamma_{i,S}} - 1 \right) + \frac{2\sigma \delta_n}{\sqrt{n}\rho_{2k} \beta_{\min}} \left(\sqrt{\gamma_{i,S} } + \frac{1}{\sqrt{\rho_{k,1}}}\right) + \frac{2 \delta_n^2 \sigma^2}{n \rho_{2k}^2 \beta_{\min}^2} \right] \,,\\
&\overset{(e)}{\le}  C \left[ 
(\gamma_k - 1) + \frac{2\sigma \delta_n}{\sqrt{n} \rho_{2k} \beta_{\min}} \left(\sqrt{\gamma_k } + 1/\sqrt{\rho_{k,1}}\right) + \frac{2 \delta_n^2 \sigma^2}{n \rho_{2k}^2 \beta_{\min}^2} \right] \,.
\end{align*}
Step~(a) uses the bounds obtained in (\ref{eq:xil2}) and (\ref{eq:xigammaxi}) and introduces the notation $R_i$ for simplicity.  The $i'$ is captured in the $\ell_{\infty}$-norm term.  Step~(b) is simple algebra.  In step~(c), we make use of the bound $R_i^2 = \|X_{\bar{S}}^T \Pi^{\perp}[A_i] X_{\bar{S}} \beta^*_{\bar{S}} \|_{\infty}^2 \ge n^2 \rho_{2k}^2 \|\beta^*_{\bar{S}}\|_2^2/|\bar{S}| \ge n^2 \rho_{2k}^2 \beta_{\min}^2$.  In Step~(d), we factor out the terms that do not depend on $i$ and represent them by $C$, where $C > 0$.  In Step~(e), we use the fact that $\min_{i \in (S^*)^c \cap S} \gamma_{i,S} = \gamma_k$, where $\gamma_k$ is defined in (\ref{eq:gammaD}).  This equivalence can be easily established using the block-inversion formula.


\section{Proof of Theorem~\ref{thm:optimalsample}}
\label{app:optimalsample}

Suppose that, after $r$ iterations, $\SW$ outputs $\widehat{S} = S^{(r)}$.  To ensure that $\widehat{S} = S^*$, we want to impose conditions so that with each iteration, $\SW$ takes positive steps towards the true support $S^*$.  Let $S^{(1)},S^{(2)},\ldots,S^{(r)}$ be the intermediate supports computed in each iteration of $\SW$.  In what follows, the conditions we impose will ensure that 
\begin{equation}
|S^* \backslash S^{(1)}| \ge |S^* \backslash S^{(2)}| \ge \cdots \ge |S^* \backslash S^{(r-1)}| > |S^* \backslash S^{(r)}| = 0 \,. \label{eq:conditionS}
\end{equation}
In other words, with each iteration, the number of active variables missed by $\SW$ will not increase and eventually decrease to $0$.  In order to ensure that (\ref{eq:conditionS}) holds and that $r$ is much smaller than $\binom{p}{d}$, we define the following events:
\begin{align}
{\cal E}_{S^*} &= \{ S^* = \arg \min_{S \in \Omega_k} {\cal L}(S;y,X)\} \,, \\
{\cal D}_{S} &= \left\{ \min_{i \in S \cap (S^*)^c, i' \in S^c \cap S^*} {\cal L}(S^{(i,i')}; y, X) < {\cal L}(S;y,X) \right\} \,, \\
{\cal F}_{S} &= \left\{ \min_{i \in S \cap (S^*)^c, i' \in S^c \cap S^*} {\cal L}(S^{(i,i')}; y, X) < \min_{j \in S , j' \in S^c \cap S^*} {\cal L}(S^{(j,j')}; y, X) \right\} \,.
\end{align}
The event ${\cal E}_{S^*}$ is the set of outcomes where $S^*$ minimizes the loss.  The event ${\cal D}_{S}$ is the set of outcomes where there exists at least one inactive variable in $S$ that can be swapped with an active variable from $S^c$.  Finally, the event ${\cal F}_{S}$ is the set of outcomes where an inactive variable in $S$ can only be swapped with an active variable from $S^c$ and an active variable from $S$ cannot be swapped with an inactive variable from $S^c$.  Before analyzing the events ${\cal E}_{S^*}$, ${\cal D}_{S}$, and ${\cal F}_{S}$, we establish an upper bound for the number of iterations in $\SW$.
\begin{lemma}
\label{lemma:upperR}
If for every $S \in \Omega_k$ such that $|S^* \backslash S| \le d$, $\Pr({\cal E}_{S^*}) = \Pr({\cal D}_{S}) = \Pr({\cal F}_{S}) = 1$, then $r$ can be upper bounded as follows:
\begin{align}
r \le \left\{
\begin{array}{cc}
d \binom{k}{d} & d \le \lceil k/2 \rceil \\
2^k & d > \lceil k/2 \rceil
\end{array}
\right. \,. \nonumber 
\end{align}
\label{eq:upperR}
\end{lemma}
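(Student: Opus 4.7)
The plan is to convert the three probability-one hypotheses into deterministic structural constraints on the trajectory $S^{(1)}, S^{(2)}, \ldots, S^{(r)}$ generated by $\SW$, and then bound $r$ by a combinatorial count of the states the trajectory can possibly visit. First I would use the event ${\cal D}_S$ to argue that at every $S \in \Omega_k$ with $|S^* \backslash S| \le d$ and $S \ne S^*$ there is a strictly loss-reducing swap available, so $\SW$ cannot halt there; combined with ${\cal E}_{S^*}$, which makes $S^*$ the unique global minimizer of ${\cal L}(\cdot;y,X)$ over $\Omega_k$, this forces $S^{(r)} = S^*$. Moreover, because the loss strictly decreases at each iteration, every intermediate $S^{(t)}$ has lower loss than $S^{(1)}$, and it will follow (from the chain structure established in the next step) that $|S^* \backslash S^{(t)}| \le |S^* \backslash S^{(1)}| \le d$, so the hypothesis on ${\cal F}_S$ applies at every iterate.

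Next I would use ${\cal F}_S$ to characterize the swap performed at each iteration. The event ${\cal F}_S$ guarantees that the best $(\text{inactive-in-}S,\ \text{active-in-}S^c)$ swap strictly outperforms the best swap that adds an inactive variable, so the globally optimal swap chosen by $\SW$ must add an active variable $\widehat i' \in S^c \cap S^*$. The removed variable $\widehat i$ is then either active (type I: $\widehat i \in S \cap S^*$) or inactive (type II: $\widehat i \in S \cap (S^*)^c$). Letting $I_t := S^{(t)} \cap (S^*)^c$, a type-I swap leaves $I_{t+1} = I_t$ whereas a type-II swap gives $I_{t+1} \subsetneq I_t$ with $|I_{t+1}| = |I_t| - 1$. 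Consequently $(I_t)_{t \ge 1}$ is a weakly decreasing chain of subsets of $I_1$, with $|I_1| \le d$ and $|I_r| = 0$.

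Finally I would count. Since the least-squares loss strictly decreases along the trajectory, the states $S^{(1)}, \ldots, S^{(r)}$ are pairwise distinct. At each chain level $d' := |I_t|$, the inactive part $I_t$ is frozen at some specific subset of $I_1$, so a visited state is determined by the choice of its active part $S^{(t)} \cap S^* \subseteq S^*$ of size $k - d'$, giving at most $\binom{k}{k-d'} = \binom{k}{d'}$ candidates. Summing over the levels $d' = 0, 1, \ldots, |I_1|$ and using $|I_1| \le d$ yields
\[
r \;\le\; \sum_{d'=0}^{d} \binom{k}{d'}.
\]
For $d \le \lceil k/2 \rceil$ the binomials are non-decreasing in $d'$, so the right-hand side is $O(d\binom{k}{d})$ and can be absorbed into the stated $d\binom{k}{d}$ bound (with at worst a harmless $+1$ from the single state $S^*$); for $d > \lceil k/2 \rceil$ the sum is bounded trivially by $\sum_{d'=0}^{k}\binom{k}{d'} = 2^k$.

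The main obstacle is Step 2: correctly extracting from the ${\cal F}_S$ event that the globally optimal swap chosen by $\SW$ must add an active variable, and in particular ruling out $(\text{inactive-in},\ \text{inactive-out})$ swaps that would break the monotone-chain structure of $I_t$. Once this is in hand, the freezing of $I_t$ between type-II swaps and the level-by-level binomial counting are routine.
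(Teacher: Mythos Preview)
Your approach is essentially the same as the paper's: use ${\cal F}_S$ to conclude that every swap chosen by $\SW$ brings in an active variable, so that $I_t = S^{(t)} \cap (S^*)^c$ is a weakly decreasing chain of subsets of $I_1$; then, since $I_t$ is frozen between type-II swaps, count the visited states at each level $d'$ by $\binom{k}{k-d'}=\binom{k}{d'}$ and sum. Your treatment of the type-I/type-II dichotomy and the monotone chain is in fact more explicit than the paper's short argument.

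One minor bookkeeping point: your bound $\sum_{d'=0}^{d}\binom{k}{d'}$ only yields $(d+1)\binom{k}{d}$ for $d \le \lceil k/2 \rceil$, which exceeds the stated $d\binom{k}{d}$ by $\binom{k}{d}$, not by a ``harmless $+1$''. To match the statement exactly, tighten the bottom two levels separately: at $d'=0$ only $S^*$ is visited, and at $d'=1$ the set $S^*$ is a single-swap neighbor of $S^{(t)}$ and is the global minimizer by ${\cal E}_{S^*}$, so $\SW$ moves directly to it and at most one state is visited at that level as well. This gives $r \le 2 + \sum_{d'=2}^{d}\binom{k}{d'} \le 2 + (d-1)\binom{k}{d} \le d\binom{k}{d}$, which is precisely how the paper closes the argument.
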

\begin{proof}
If $d \le 1$, it is clear that $\Pr({\cal E}_{S^*}) = 1$ ensures that $r = 1$ if $S = S^*$ and $r = 2$ if $|S \backslash S^*| = 1$.  If $d > 1$, then the conditions $\Pr({\cal D}_{S}) = \Pr({\cal F}_{S}) = 1$ ensures that in each iteration, either an active variable is swapped with an active variable or an inactive variable is swapped with an active variable.  For each $d' \le d$, there are $\binom{k}{k-d'}$ possible supports once the inactive variables have been fixed.  This means that the maximum possible value of $r$ is $1+ \sum_{d' = 2}^{d} \binom{k}{k-d'} = 1+\sum_{d'=2}^{d} \binom{k}{d'}$.  The upper bound in (\ref{eq:upperR}) follows using standard upper bounds of the binomial coefficients.
\end{proof}
For notational convenience, define the event ${\mathscr{E} = {\cal E}_{S^*} \bigcap_{\ell=1}^{r-1} {\cal D}_{S^{(\ell)}} \bigcap_{\ell=1}^{r-1} {\cal F}_{S^{(\ell)}}}$.  We have the following lower bound for the probability of correctly recovering the true support:
\begin{align}
\Pr(\widehat{S} = S^*) &\ge \Pr( \widehat{S} = S^* | {\mathscr{E}}) \Pr({\mathscr{E}}) = \Pr({\mathscr{E}}) \,, \nonumber \\
&\ge 1 - \Pr({\cal E}_{S^*}^c) - \sum_{\ell=1}^{r-1} \Pr({\cal D}_{S^{(\ell)}}^c) - \sum_{\ell=1}^{r-1} \Pr({\cal F}_{S^{(\ell)}}^c) \,, \nonumber \\
&\ge 1 - \Pr({\cal E}_{S^*}^c) - r \max_{S \in \Omega_{k,d} \backslash S^*} \Pr({\cal D}_{S}^c) - r \max_{S \in \Omega_{k,d} \backslash S^*} \Pr({\cal F}_{S}^c) \,. \label{eq:tttt1}
\end{align}
From Theorem~\ref{thm:EqualCaseOne}, we know that if $n > \frac{4 + \log(k^2 (p-k)) }{ c^2 \beta_{\min}^2 \rho_{2k}/2 }$, where $0 < c^2 \le 1/(18 \sigma^2)$, then $\Pr({\cal E}_{S^*}^c) \rightarrow 0$.  Furthermore, from the proof of Theorem~\ref{thm:EqualCaseTwo} in Appendix~\ref{app:EqualCaseTwo}, we know that $\Pr({\cal D}_S^c) \le 4 e^{-c^2 n \beta_{\min}^2 \rho_{2k}^2/2}$.  Thus, we only need to analyze $\Pr({\cal F}_S^c)$.  Using the definition of the least-squares loss, we have
\[
{\cal F}_S^c = \left\{ \min_{i,i'} 
\max_{j,j'} \left[ 
\xi^T \Theta_S(i,i',j,j') \xi + w^T \Theta_S(i,i',j,j') w + 2 \xi^T \Theta_S(i,i',j,j') w
\right] \ge 0 \right\} \,,
\]
where $\xi = X\beta^*$, $i \in (S^*)^c \cap S,i' \in S^c \cap S^*, j \in S,$ and  $j' \in S^c \cap (S^*)^c$.  The matrix $\Theta_S(i,i',j,j')$ is defined as
\begin{align}
\Theta_S(i,i',j,j') = \Pi^{\perp}[A_i,i'] - \Pi^{\perp}[A_j,j'] \,. \nonumber 
\end{align}
Using similar methods as in Appendix~\ref{app:EqualCaseTwo}, we can write down an upper bound for $\frac{\Pr({\cal F}_S^c)}{k(p-k)}$ as follows:
\begin{align*}
\mathbbm{1}_{\R^+}\left( \min_{i,i'}\max_{j,j'} \left[ \xi^T \Theta_S(i,i',j,j') \xi + 2 \delta_n^2 \sigma^2 + 2 \sigma ||  \Theta_S(i,i',j,j')^T \xi ||_2 \delta_n \right] \right) 
+ 4e^{-\delta_n^2/2} \,,
\end{align*}
where the $k(p-k)$ arises because ${\cal F}_{S}$ is defined by a term that takes a maximum over $k(p-k)$ possible number of elements.  In Section~\ref{subsec:appprofeq1}, we show that for some $C > 0$ and $\nu_d$ defined in (\ref{eq:nud}),
\begin{align}
\min_{i,i'}\max_{j,j'} &\left[ \xi^T \Theta_S(i,i',j,j') \xi + 2 \delta_n^2 \sigma^2 + 2 \sigma ||  \Theta_S(i,i',j,j')^T \xi ||_2 \delta_n \right] \nonumber \\
& \le C \left[ 
(\nu_d - 1) + \frac{2\sigma\delta_n}{ \sqrt{\frac{dn}{d+1} } \rho_{2k} \beta_{\min}} \left(\sqrt{\nu_{d}} + \sqrt{\frac{2}{\rho_{k-1,0}}}\right) + \frac{2\delta_n^2}{\frac{d}{d+1} n \rho_{2k}^2 \beta_{\min}^2/2}
\right] \label{eq:int1} \,.
\end{align}
Choosing $\delta_n^2 = c_1^2 \frac{d}{d+1} n \rho_{2k}^2 \beta_{\min}^2$, we can upper bound $\Pr({\cal F}_S^c)$ as
\[
k(p-k)\mathbbm{1}_{\R^+} \left( 
(\nu_d - 1) + 2 \sigma c_1 \left( \sqrt{\nu_d} + \sqrt{\frac{2}{\rho_{k-1,0}}} \right)
+ 2(c_1 \sigma)^2 \ge 0 \right) + 4k(p-k) e^{-c_1^2\frac{d}{d+1} n \rho_{2k}^2 \beta_{\min}^2} \,.
\]
If $(\nu_d - 1) + 2 \sigma c_1 \left( \sqrt{\nu_d} + \sqrt{\frac{2}{\rho_{k-1,0}}} \right) + 2(c_1 \sigma)^2< 0$, then $\Pr({\cal F}_S^c) \le 4 k (p-k) e^{-c_1^2\frac{d}{d+1} n \rho_{2k}^2 \beta_{\min}^2/2}$.  Plugging into (\ref{eq:tttt1}), we have
\begin{align}
\Pr(\widehat{S} = S^*) &\ge 1 - \Pr({\cal E}_{S^*}) - 4re^{-c^2 n \beta_{\min}^2 \rho_{2k}^2/2} - 4r k(p-k)  e^{-c^2 \frac{d}{d+1} n \beta_{\min}^2 \rho_{2k}^2/2} \,, \nonumber \\
&\overset{(a)}{\ge} 1 - \Pr({\cal E}_{S^*}) - 5rk(p-k)  e^{-c^2 n \beta_{\min}^2 \rho_{2k}^2/4} \nonumber \,.
\end{align}
In (a), we use the simple bound of $d/(d+1) \ge 1/2$ for $d\ge 1$ and let $c_1 = c$, where $0 < c^2/2 < 1/(18\sigma^2)$.  From Lemma~\ref{lemma:upperR}, $r \le 2^k$, so it is sufficient to choose $n > (2k+\log(k(p-k))/(c^2\beta_{\min}^2 \rho_{2k}^2/4)$ for $\Pr(\widehat{S} = S^*) \rightarrow 1$.

\subsection{Proof of Equation~(\ref{eq:int1})}
\label{subsec:appprofeq1}
Using Lemma~\ref{lemma:blockdecomp}, we have
\begin{align}
\Theta_S(i,i',j,j') &= \underbrace{\Pi^{\perp}[A_{ij}] X_{ij'} \left( X_{ij'}^T \Pi^{\perp}[A_{ij}] X_{ij'}\right)^{-1} X_{ij'}^T \Pi^{\perp}[A_{ij}]}_{P_{ij'}} \nonumber \\
&\qquad - \underbrace{ \Pi^{\perp}[A_{ij}] X_{i'j} \left( X_{i'j}^T \Pi^{\perp}[A_{ij}] X_{i'j}\right)^{-1} X_{i'j}^T \Pi^{\perp}[A_{ij}] }_{P_{i'j}} \,. \nonumber 
\end{align}
Using the notation $Z_{l} = X_l^T \Pi^{\perp}[A_{ij}] X_l$, it is easy to see that the following holds:
\begin{align}
\Lambda_{\min}\left(X_{ij'}^T \Pi^{\perp}[A_{ij}] X_{ij'}\right)
&= (\|Z_{i}\|_2^2 + \|Z_{j'}\|_2^2 - \sqrt{ (\|Z_{i}\|_2^2 - \|Z_{j'}\|_2^2)^2 - 4 (Z_{i}^T Z_{j'})^2 })/2 \nonumber \\
&\ge  \min\{ \|Z_{i'}\|_2^2 , \|Z_{j}\|_2^2 \} \,, \nonumber \\ 
\Lambda_{\max}\left(X_{ij'}^T \Pi^{\perp}[A_{ij}] X_{ij'}\right)
&= (\|Z_{i}\|_2^2 + \|Z_{j'}\|_2^2 + \sqrt{ (\|Z_{i}\|_2^2 - \|Z_{j'}\|_2^2)^2 - 4 (Z_{i}^T Z_{j'})^2 })/2 \nonumber \\
&\le \max \{ \|Z_{i}\|_2^2 , \|Z_{j'}\|_2^2 \} \le n\,. \nonumber 
\end{align}
Next, we have the following upper bound for $\| P_{ij'} \xi \|_2^2$:
\begin{align}
\| P_{ij'} \xi \|_2^2 &\le \left\| \left( X_{ij'}^T \Pi^{\perp}[A_{ij}] X_{ij'}\right)^{-1} \right\|_2 \cdot \| X_{ij'}^T \Pi^{\perp}[A_{ij}] \xi \|_2^2 \,, \nonumber \\
&\le \frac{\| X_{ij'}^T \Pi^{\perp}[A_{ij}] \xi \|_2^2}
{\Lambda_{\min}\left( X_{ij'}^T \Pi^{\perp}[A_{ij}] X_{ij'}\right) }
\le \frac{ (X_{i}^T \Pi^{\perp}[A_{ij}] \xi )^2 + (X_{j'}^T \Pi^{\perp}[A_{ij}] \xi )^2}
{ \min \{ \| \Pi^{\perp}[A_{ij}] X_{i}\|_2^2, \|\Pi^{\perp}[A_{ij}] X_{j'}\|_2^2 \}} \,. \nonumber 
\end{align}
Using similar steps as in Section~\ref{subsec:appprofeq}, we have
\begin{align*}
(X_{i}^T \Pi^{\perp}[A_{ij}] \xi )^2 &= 
\left\| X_{i}^T \Pi^{\perp}[A_{ij}] X_{\bar{S}_j} (X_{\bar{S}_j}^T \Pi^{\perp}[A_{ij}] X_{\bar{S}_j})^{-1} \right\|_1^2 \cdot \| X_{\bar{S}_j}^T \Pi^{\perp}[A_{ij}] \xi \|_{\infty}^2 \,, \\
(X_{j'}^T \Pi^{\perp}[A_{ij}] \xi )^2 &= 
\left\| X_{j'}^T \Pi^{\perp}[A_{ij}] X_{\bar{S}_j} (X_{\bar{S}_j}^T \Pi^{\perp}[A_{ij}] X_{\bar{S}_j})^{-1} \right\|_1^2 \cdot \| X_{\bar{S}_j}^T \Pi^{\perp}[A_{ij}] \xi \|_{\infty}^2 \,,
\end{align*}
where $\bar{S}_j = \{ S^* \backslash S \} \cup \{j\}$.  Defining $\nu_{i,j,j',S}$ as
\[
\nu_{i,j,j',S} = \frac{\left\| X_{i}^T \Pi^{\perp}[A_{ij}] X_{\bar{S}_j} (X_{\bar{S}_j}^T \Pi^{\perp}[A_{ij}] X_{\bar{S}_j})^{-1} \right\|_1^2 + \left\| X_{j'}^T \Pi^{\perp}[A_{ij}] X_{\bar{S}_j} (X_{\bar{S}_j}^T \Pi^{\perp}[A_{ij}] X_{\bar{S}_j})^{-1} \right\|_1^2}{\min \{ \| \Pi^{\perp}[A_{ij}] X_{i}\|_2^2, \|\Pi^{\perp}[A_{ij}] X_{j'}\|_2^2 \}/n} \,,
\]
we have
$
\| P_{ij'} \xi \|_2^2 \le \nu_{i,j,j',S}  \| X_{\bar{S}_j}^T \Pi^{\perp}[A_{ij}] \xi \|_{\infty}^2/n$.   Next, upper and lower bounds for $\|P_{i'j} \xi\|_2^2$ are given as follows:
\begin{align}
\|P_{i'j} \xi\|_2^2 &\le \frac{(X_{i'}^T \Pi^{\perp}[A_{ij}] \xi )^2 + (X_{j}^T \Pi^{\perp}[A_{ij}] \xi )^2}{\min \{ \| \Pi^{\perp}[A_{ij}] X_{i'}\|_2^2, \|\Pi^{\perp}[A_{ij}] X_{j}\|_{\infty}^2 \}} \le \frac{2 \| X_{\bar{S} \cup j} \Pi^{\perp}[A_{ij}] \xi \|_{\infty}^2}{n \rho_{k-1,0}} \,, \nonumber \\
\max_{i' \in \bar{S}} \|P_{i'j} \xi\|_2^2 & \ge \max_{i' \in \bar{S}} \| X_{i'j} \Pi^{\perp}[A_{ij}] \xi \|_{2}^2/n = \| X_{\bar{S}}^T \Pi^{\perp}[A_{ij}] \xi \|_{\infty}^2/n + (X_{j}^T \Pi^{\perp}[A_{ij}] \xi)^2/n 
\nonumber \\
&\ge \|X_{\bar{S}_j} \Pi^{\perp}[A_{ij}] \xi \|_{\infty}^2 / n \,. \nonumber 
\end{align}
We can now upper bound $\xi^T \Theta_S(i,i',j,j') \xi$ and $\| \Theta_S(i,i',j,j') \xi\|$:
\begin{align*}
\min_{i' \in \bar{S}} \xi^T \Theta_S(i,i',j,j') \xi &= \| P_{ij'} \|_2^2 - \max_{i' \in \bar{S}}  \| P_{i'j} \|_2^2 \,, \\
&\le \nu_{i,j,j',S} \| X_{\bar{S}_j}^T \Pi^{\perp}[A_{ij}] \xi\|_{\infty}^2/n - \| X_{\bar{S}_j}^T \Pi^{\perp}[A_{ij}] \xi\|_{\infty}^2/n \,, \\
\| \Theta_S(i,i',j,j') \xi\| &\le \| P_{ij'} \xi \|_2 + \| P_{i'j} \xi \|_2 \,, \\
&\le \sqrt{\nu_{i,j,j',S}}  \| X_{\bar{S}_j}^T \Pi^{\perp}[A_{ij}] \xi\|_{\infty}/\sqrt{n} + \frac{\sqrt{2} \| X_{\bar{S} \cup j} \Pi^{\perp}[A_{ij}] \xi \|_{\infty}}{\sqrt{n \rho_{k-1,0}}} \,.
\end{align*}
Putting everything together, we can upper bound the expression on the left of (\ref{eq:int1}) as follows:
\begin{align}
& \frac{\| X_{\bar{S}_j}^T \Pi^{\perp}[A_{ij}] \xi\|_{\infty}^2}{n} \left[ (\nu_{i,j,j',S} - 1) + \frac{2\sigma \delta_n \sqrt{n}}{ \| X_{\bar{S}_j}^T \Pi^{\perp}[A_{ij}] \xi\|_{\infty}} \left( \sqrt{\nu_{i,j,j',S}} + \frac{\sqrt{2} }{\sqrt{\rho_{k-1,0}}}\right) \right.  \nonumber \\
 &\hspace{8cm} + \left.\frac{2 \delta_n^2 n}{ \| X_{\bar{S}_j}^T \Pi^{\perp}[A_{ij}] \xi\|_{\infty}^2} \right] \nonumber \\
 & \le C \left[
 (\nu_{i,j,j',S} - 1) + \frac{2\sigma\delta_n }{ \sqrt{\frac{dn}{d+1}} \rho_{2k} \beta_{\min}} \left(\sqrt{\nu_{i,j,j',S}} + \frac{\sqrt{2} }{ \sqrt{\rho_{k-1,0}}}\right) + \frac{2\delta_n^2}{\frac{d}{d+1} n \rho_{2k}^2 \beta_{\min}^2}
 \right] \,, \nonumber 
\end{align}
where $C > 0$ and we use the inequality
$ \| X_{\bar{S}_j}^T \Pi^{\perp}[A_{ij}] \xi \|_{\infty}^2 \ge \frac{d}{d+1} n^2 \rho_{2k}^2 \beta_{\min}^2$.  Taking the minimum over $i \in (S^*)^c$ and maximum over $j \in S$ and $j' \in S^c \cap (S^*)^c$, we obtain the desired upper bound of
\[
C \left[ 
(\nu_d - 1) + \frac{2\sigma\delta_n}{ \sqrt{\frac{dn}{d+1}} \rho_{2k} \beta_{\min}} \left(\sqrt{\nu_{d}} + \frac{\sqrt{2} }{\sqrt{\rho_{k-1,0}}}\right) + \frac{2 \delta_n^2}{{\frac{dn}{d+1}\rho_{2k}^2 \beta_{\min}}}
\right] \,,
\] 
where $\nu_d$ is defined in (\ref{eq:nud}).

\section{Assorted Useful Lemmata}

In this Section, we collect some useful lemmata we used in the proofs above.  For the first three lemmas, let $w$ be a random vector with parameter $\sigma$ so that the entries of $w$ are i.i.d. zero mean sub-Gaussian random variables with parameter $\sigma$. 

\begin{lemma}[\citet{Hsu2012Prob}]
\label{lemma:firstSG}
If $A$ is a rank $\ell$ projection matrix, then $\Pr(\|Aw\|_2^2/\sigma^2 - \ell > 2 \sqrt{\ell t} + 2t) \le e^{-t}$ for all $t \ge 0$.
\end{lemma}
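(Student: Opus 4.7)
The plan is to apply a Chernoff-style tail bound to the quadratic form $\|Aw\|_2^2 = w^T A w$. The main obstacle is controlling the moment generating function (MGF) of this quadratic form when the entries of $w$ are only assumed to be sub-Gaussian (not Gaussian). The key idea is a Gaussian decoupling trick that reduces the computation to the MGF of a chi-squared random variable, after which the classical Laurent--Massart argument yields the claimed bound.

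First I would note that since $A$ is an orthogonal projection of rank $\ell$, it satisfies $A = A^T = A^2$, so it is positive semi-definite with a square root $A^{1/2}$ and $\|Aw\|_2^2 = w^T A w$. For any $\lambda \ge 0$, the standard Gaussian MGF identity
\[
\exp\!\bigl(\tfrac{1}{2}\|v\|_2^2\bigr) \;=\; \E_g\!\bigl[\exp(g^T v)\bigr],\qquad g \sim N(0,I_n),
\]
applied with $v = \sqrt{2\lambda}\, A^{1/2} w$ lets us rewrite the MGF of the quadratic form as a mixed expectation over an auxiliary independent Gaussian $g$:
\[
\E_w\!\bigl[\exp(\lambda w^T A w)\bigr] \;=\; \E_w \E_g\!\bigl[\exp\bigl(\sqrt{2\lambda}\, (A^{1/2} g)^T w\bigr)\bigr].
\]

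Next I would swap the order of expectation via Fubini and apply the sub-Gaussian MGF assumption $\E[\exp(t w_i)] \le \exp(t^2 \sigma^2/2)$ coordinatewise to the linear form in $w$ (conditioning on $g$). This gives
\[
\E_w\!\bigl[\exp\bigl(\sqrt{2\lambda}\,(A^{1/2}g)^T w\bigr) \,\big|\, g\bigr] \;\le\; \exp\!\bigl(\lambda \sigma^2\, g^T A g\bigr),
\]
and since $A$ is a rank-$\ell$ orthogonal projection, $g^T A g$ is exactly chi-squared with $\ell$ degrees of freedom. Taking expectation over $g$ and using the chi-squared MGF yields
\[
\E\!\bigl[\exp(\lambda w^T A w)\bigr] \;\le\; (1 - 2\lambda \sigma^2)^{-\ell/2}, \qquad \lambda \in \bigl[0,\, 1/(2\sigma^2)\bigr).
\]

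Finally, Markov's inequality gives, for every $u > 0$,
\[
\Pr\!\bigl(w^T A w \ge u\bigr) \;\le\; \exp(-\lambda u)\,(1 - 2\lambda \sigma^2)^{-\ell/2}.
\]
Setting $u = \sigma^2\bigl(\ell + 2\sqrt{\ell t} + 2t\bigr)$ and choosing $\lambda = t/\bigl(\sigma^2(\ell + 2t + 2\sqrt{\ell t})\bigr)$ reduces the right-hand side to $e^{-t}$ by the classical Laurent--Massart optimization for chi-squared tails. The only non-routine step in this argument is the Gaussian decoupling that reduces the sub-Gaussian quadratic-form MGF to the Gaussian one; everything downstream is a standard Chernoff computation, and the result matches the form of the Laurent--Massart inequality with variance proxy $\sigma^2$.
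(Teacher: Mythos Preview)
The paper does not prove this lemma; it simply quotes it from \citet{Hsu2012Prob} and uses it as a black box. Your argument is essentially the proof given in that reference: the Gaussian decoupling identity reduces the sub-Gaussian quadratic-form MGF to the chi-squared MGF $(1-2\lambda\sigma^2)^{-\ell/2}$, after which the chi-squared tail bound of Laurent--Massart applies verbatim. So the approach is correct and matches the cited source.

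One small slip: the specific value $\lambda = t/\bigl(\sigma^2(\ell+2t+2\sqrt{\ell t})\bigr)$ that you wrote down does \emph{not} give the bound $e^{-t}$. If you first pass through the cumulant inequality $-\log(1-x)-x \le x^2/(2(1-x))$ to obtain $\log \E\bigl[e^{\mu(Z-\ell)}\bigr] \le \ell\mu^2/(1-2\mu)$ with $\mu=\lambda\sigma^2$, the choice that makes the Chernoff bound equal to $e^{-t}$ is $\mu = \sqrt{t}/(\sqrt{\ell}+2\sqrt{t})$; alternatively, the exact MGF optimizer $2\mu = 1-\ell/u$ gives something at least as good. This is a routine correction and does not affect the structure of your argument.
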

\begin{lemma}
\label{lemma:secondSG}
If $A_1$ and $A_2$ are rank $\ell$ projection matrices, then for $t \ge 1$,
\[
\Pr\left( \left| \|A_1 w\|_2^2 - \|A_2 w\|_2^2\right| \ge 8 \sigma^2 \ell t  \right) \le 2e^{-\ell t} \,.
\]
\end{lemma}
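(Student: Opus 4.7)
The plan is to reduce the two-sided difference of quadratic forms to a one-sided tail bound, then invoke Lemma~\ref{lemma:firstSG} twice.

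First, I would observe that each $\|A_i w\|_2^2$ is non-negative, so the elementary inequality $|a-b| \le \max(a,b)$ (valid for $a,b \ge 0$) gives
\[
\bigl|\|A_1 w\|_2^2 - \|A_2 w\|_2^2\bigr| \;\le\; \max\bigl\{\|A_1 w\|_2^2,\; \|A_2 w\|_2^2\bigr\}.
\]
This eliminates the need for a two-sided concentration statement and lets me reuse Lemma~\ref{lemma:firstSG}, which is only stated as an upper tail.

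Second, I would apply Lemma~\ref{lemma:firstSG} to each $A_i$ with the substitution $t \leftarrow \ell t$, yielding
\[
\Pr\!\left(\|A_i w\|_2^2 > \sigma^2\bigl(\ell + 2\ell\sqrt{t} + 2\ell t\bigr)\right) \;\le\; e^{-\ell t}.
\]
For $t \ge 1$ we have $1 \le t$ and $\sqrt{t} \le t$, so $\ell + 2\ell\sqrt{t} + 2\ell t \le 5 \ell t \le 8 \ell t$, giving $\Pr(\|A_i w\|_2^2 > 8\sigma^2 \ell t) \le e^{-\ell t}$ for each $i \in \{1,2\}$.

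Finally, a union bound over $i \in \{1,2\}$ combined with the first step yields $\Pr\bigl(|\|A_1 w\|_2^2 - \|A_2 w\|_2^2| \ge 8\sigma^2 \ell t\bigr) \le 2 e^{-\ell t}$, which is the desired conclusion. There is no real obstacle here: the argument is essentially a trivial consequence of Lemma~\ref{lemma:firstSG} once one notices that non-negativity of $\|A_i w\|_2^2$ lets us avoid lower tails entirely. The mild slack in the constant (we actually obtain $5$ rather than $8$) simply absorbs the $1 + 2\sqrt{t}$ term using $t \ge 1$.
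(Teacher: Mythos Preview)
Your proof is correct. Both your argument and the paper's rest on the same two observations: non-negativity of $\|A_i w\|_2^2$ makes the lower tail trivial, and Lemma~\ref{lemma:firstSG} with the substitution $t \leftarrow \ell t$ handles the upper tail once $t \ge 1$ absorbs the $1 + 2\sqrt{t}$ terms into a multiple of $t$. The only difference is the reduction step: the paper centers at $\ell$ via the triangle inequality $|a-b| \le |a-\ell| + |b-\ell|$ and then notes that $\Pr(\|A_i w\|_2^2/\sigma^2 - \ell < -x) = 0$ whenever $x > \ell$, whereas you bypass centering entirely with $|a-b| \le \max(a,b)$. Your route is marginally cleaner (no need to check the side condition $x > \ell$) and even yields a slightly better constant ($5$ in place of $8$); the paper's route is the more conventional concentration template and would generalize more readily if one wanted a bound in terms of deviations from the mean rather than from zero.
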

\begin{proof}
Using the triangle inequality and the union bound, we have
\begin{align*}
\Pr\left( \left| \|A_1 w\|_2^2 - \|A_2 w\|_2^2\right|/\sigma^2  > 2x \right) &\le 
\Pr\left( \left| \|A_1 w\|_2^2 /\sigma^2 - \ell \right|  > x  \right)
+ \Pr\left( \left| \|A_2 w\|_2^2 /\sigma^2- \ell \right|  > x  \right) \\
& \le 2 \P\left( \left| \|A_1 w\|_2^2 /\sigma^2 - \ell \right|  > x  \right) \,.
\end{align*}
Analyzing $\Pr\left( \left| \|A_1 w\|_2^2/ \sigma^2 - \ell \right|   > x  \right)$, we have
\begin{align*}
\Pr\left( \left| \|A_1 w\|_2^2/\sigma^2 - \ell \right| > x  \right) &\le 
\Pr\left( \|A_1 w\|_2^2 /\sigma^2 - \ell   > x  \right) + \Pr\left(  \|A_1 w\|_2^2/\sigma^2 - \ell  < -x  \right) \\
&\le \Pr\left( \|A_1 w\|_2^2 /\sigma^2 - \ell   > x  \right), \text{ when $x > \ell$} \,.
\end{align*}
Substituting, we have that for $x > \ell$,
\begin{align*}
\Pr\left( \left| \|A_1 w\|_2^2 - \|A_2 w\|_2^2\right|/\sigma^2 > 2x \right) \le 
2  \Pr\left( \|A_1 w\|_2^2 /\sigma^2 - \ell   > x  \right) \,.
\end{align*}
Let $x = 2\sqrt{\ell \cdot \ell t} + 2 \ell t $.  Since $4t \ge 2 \sqrt{t} + 2t$, we obtain the desired result using Lemma~\ref{lemma:firstSG}.
\end{proof}
\begin{lemma}
\label{lemma:thirdSG}
For any vector $a \in \R^n$, $\Pr(|a^T w | > t) \le 2e^{-t^2/(2\|a\|_2^2 \sigma^2)}$.
\end{lemma}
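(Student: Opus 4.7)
The plan is to apply the standard Chernoff (moment generating function) argument, exploiting the fact that a weighted sum of independent sub-Gaussian random variables is itself sub-Gaussian. First, I would observe that $a^T w = \sum_{i=1}^n a_i w_i$ is a sum of independent random variables, since the coordinates $w_i$ are assumed i.i.d., and each $a_i w_i$ is sub-Gaussian with parameter $|a_i| \sigma$ (this follows directly from the defining inequality $\E[\exp(t w_i)] \le \exp(t^2 \sigma^2 /2)$ applied at argument $t a_i$).

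Next, using independence, I would compute the MGF of $a^T w$ as
\begin{equation*}
\E[\exp(t \, a^T w)] \;=\; \prod_{i=1}^n \E[\exp(t a_i w_i)] \;\le\; \prod_{i=1}^n \exp(t^2 a_i^2 \sigma^2 / 2) \;=\; \exp\!\left( \tfrac{1}{2} t^2 \|a\|_2^2 \sigma^2 \right).
\end{equation*}
So $a^T w$ is sub-Gaussian with parameter $\|a\|_2 \sigma$. Applying Markov's inequality to $\exp(t a^T w)$ for $t > 0$ yields $\Pr(a^T w > t') \le \exp(-t t' + t^2 \|a\|_2^2 \sigma^2/2)$, and optimizing the right-hand side in $t$ (the minimizer is $t^* = t'/(\|a\|_2^2 \sigma^2)$) gives the one-sided Chernoff bound $\Pr(a^T w > t') \le \exp(-(t')^2/(2 \|a\|_2^2 \sigma^2))$.

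Finally, applying the same argument to $-a^T w$ (whose coordinates $-a_i w_i$ are sub-Gaussian with the same parameters, since the sub-Gaussianity condition on $w_i$ is symmetric in the sign of the argument $t$) yields an identical bound for $\Pr(-a^T w > t')$, and a union bound combines the two one-sided tails into $\Pr(|a^T w| > t') \le 2 \exp(-(t')^2/(2 \|a\|_2^2 \sigma^2))$, which is exactly the claimed inequality. There is no real obstacle here: this is the textbook Hoeffding-type inequality for sub-Gaussian variables, and the only thing to be slightly careful about is verifying the sub-Gaussianity of the scaled summands $a_i w_i$ (trivial by substituting $t \mapsto t a_i$ in the assumed MGF bound).
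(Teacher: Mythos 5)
Your proof is correct. The paper actually states Lemma~\ref{lemma:thirdSG} without any proof, treating it as a standard fact about sub-Gaussian random variables, so there is nothing to compare against; your Chernoff/MGF argument---scaling the summands, multiplying the moment generating functions by independence, optimizing the exponent, and symmetrizing with a union bound---is exactly the textbook derivation one would supply, and the only point requiring care (that the sub-Gaussian bound in assumption (A3) holds for both signs of the argument, so the lower tail matches the upper tail) is one you address explicitly.
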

\begin{lemma}
\label{lemma:blockdecomp}
$\Pi[A_1,A_2] = \Pi[A_1] + \Pi^{\perp}[A_1] X_{A_2} ( X_{A_2}^T \Pi^{\perp}[A_1] X_{A_2})^{-1} X_{A_2}^T \Pi^{\perp}[A_1]
$
\end{lemma}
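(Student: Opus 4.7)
The identity is the standard additive block-projection update, and I will prove it by orthogonally decomposing the range of the stacked matrix $X_{[A_1,A_2]}$. The strategy is to exhibit an orthogonal direct-sum decomposition whose two summands have projections given exactly by the two pieces on the right-hand side of the identity; since the projection onto an orthogonal direct sum equals the sum of the individual projections, this yields the claim.

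First I will establish the decomposition
\[
\text{range}(X_{[A_1,A_2]}) \;=\; \text{range}(X_{A_1}) \;\oplus\; \text{range}\!\bigl(\Pi^{\perp}[A_1] X_{A_2}\bigr).
\]
The inclusion $\supseteq$ is immediate. The reverse inclusion follows by writing each column $X_{A_2,j} = \Pi[A_1] X_{A_2,j} + \Pi^{\perp}[A_1] X_{A_2,j}$, so that every element of $\text{range}(X_{A_2})$ decomposes into a piece in $\text{range}(X_{A_1})$ plus a piece in $\text{range}(\Pi^{\perp}[A_1] X_{A_2})$. Orthogonality of the two summands is automatic, since every output of $\Pi^{\perp}[A_1]$ lies in $\text{range}(X_{A_1})^{\perp}$. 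Hence $\Pi[A_1,A_2]$ splits additively as the sum of the orthogonal projections onto the two summands.

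Second I will identify the two projections. The first summand gives $\Pi[A_1]$ by definition. For the second, set $M = \Pi^{\perp}[A_1] X_{A_2}$; since $\Pi^{\perp}[A_1]$ is symmetric and idempotent, $M^T M = X_{A_2}^T \Pi^{\perp}[A_1] X_{A_2}$. Under assumption (A4), the Gram matrix of $X_{[A_1,A_2]}$ is invertible, which by the Schur-complement formula is equivalent to invertibility of $M^T M$; in particular $M$ has full column rank. The standard formula for the orthogonal projection onto the column space of a full-rank matrix then gives
\[
M(M^T M)^{-1} M^T \;=\; \Pi^{\perp}[A_1] X_{A_2} \bigl(X_{A_2}^T \Pi^{\perp}[A_1] X_{A_2}\bigr)^{-1} X_{A_2}^T \Pi^{\perp}[A_1],
\]
which matches the second term in the stated identity. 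Summing the two pieces completes the proof.

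There is no genuine technical obstacle here; the lemma is essentially a one-line geometric observation. The only point requiring care is the invertibility of the inner matrix $X_{A_2}^T \Pi^{\perp}[A_1] X_{A_2}$, which is the reason the hypothesis ``the columns of $X_{A_1 \cup A_2}$ are linearly independent'' (encoded by (A4) when applied to the relevant subset) must be in force; without it the expression would have to be interpreted with a pseudoinverse. An alternative route is a direct block-inverse calculation on $X_{[A_1,A_2]}^T X_{[A_1,A_2]}$ via the Schur complement, but the geometric argument above is both shorter and makes transparent why the formula is the natural ``residualization'' update used repeatedly in the preceding proofs.
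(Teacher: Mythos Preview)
Your proof is correct. The paper's own proof is a single sentence: ``Follows from the block-inversion formula,'' i.e., it expands $\Pi[A_1,A_2] = X_{[A_1,A_2]}\bigl(X_{[A_1,A_2]}^T X_{[A_1,A_2]}\bigr)^{-1}X_{[A_1,A_2]}^T$ by applying the $2\times 2$ block-inverse (Schur complement) to the Gram matrix and collecting terms. Your argument takes the geometric route instead, exhibiting the orthogonal direct-sum decomposition of $\text{range}(X_{[A_1,A_2]})$ and reading off the two projections. Both are standard and equally short; your version has the advantage of making transparent why the second term is precisely the projection onto the residualized columns $\Pi^{\perp}[A_1]X_{A_2}$, which is the interpretation actually used later in the paper (e.g., in the rank-one updates of Section~\ref{subsec:appprofeq}). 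You even note the block-inverse alternative at the end, so you have covered the paper's approach as well.
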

\begin{proof}
Follows from the block-inversion formula.
\end{proof}
In the next lemma, let $\Lambda_{\min}(W)$ and $\Lambda_{\max}(W)$ denote the minimum and maximum eigenvalues of a matrix $W$.
\begin{lemma}
\label{lemma:eigenvalues}
Let $A_1,A_2 \subset [p]$ that are disjoint.  We have the following results:
\begin{align*}
\Lambda_{\min}( (X_{A_1\cup A_2}^T X_{A_1 \cup A_2})^{-1}) &\le \Lambda_{\min}( ( X_{A_1}^T \Pi^{\perp}[A_2] X_{A_1})^{-1}) \\
\Lambda_{\max}( (X_{A_1}^T \Pi^{\perp}[A_2] X_{A_1})^{-1}) &\le \Lambda_{\max}(( X_{A_1\cup A_2}^T X_{A_1 \cup A_2})^{-1})
\end{align*}
\end{lemma}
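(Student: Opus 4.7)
The plan is to identify $(X_{A_1}^T \Pi^{\perp}[A_2] X_{A_1})^{-1}$ as a principal submatrix of the full inverse Gram matrix $(X_{A_1 \cup A_2}^T X_{A_1 \cup A_2})^{-1}$ and then invoke the Cauchy interlacing theorem for eigenvalues of principal submatrices of symmetric matrices.

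First, I would write the Gram matrix in block form (ordering indices so that $A_1$ comes first):
$$
M \defn X_{A_1\cup A_2}^T X_{A_1 \cup A_2} = \begin{pmatrix} X_{A_1}^T X_{A_1} & X_{A_1}^T X_{A_2} \\ X_{A_2}^T X_{A_1} & X_{A_2}^T X_{A_2} \end{pmatrix}.
$$
Since $A_1$ and $A_2$ are disjoint and we have assumption (A4), the block $X_{A_2}^T X_{A_2}$ is invertible, and the Schur complement of this block in $M$ is precisely
$$
X_{A_1}^T X_{A_1} - X_{A_1}^T X_{A_2}(X_{A_2}^T X_{A_2})^{-1}X_{A_2}^T X_{A_1} = X_{A_1}^T \Pi^{\perp}[A_2] X_{A_1}.
$$
By the standard block-inversion formula (this is essentially the content of Lemma~\ref{lemma:blockdecomp} applied to $M^{-1}$ rather than to a projection), the top-left block of $M^{-1}$ is exactly $(X_{A_1}^T \Pi^{\perp}[A_2] X_{A_1})^{-1}$. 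Therefore $B \defn (X_{A_1}^T \Pi^{\perp}[A_2] X_{A_1})^{-1}$ is a principal submatrix of the symmetric positive-definite matrix $M^{-1}$.

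The conclusion now follows from Cauchy's interlacing theorem (equivalently, the Courant--Fischer min-max characterization). For any principal submatrix $B$ of a symmetric matrix $M^{-1}$ of size $|A_1|$ sitting inside a matrix of size $|A_1|+|A_2|$, the eigenvalues $\lambda_1(B) \le \cdots \le \lambda_{|A_1|}(B)$ interlace those of $M^{-1}$; in particular $\Lambda_{\min}(M^{-1}) \le \Lambda_{\min}(B)$ and $\Lambda_{\max}(B) \le \Lambda_{\max}(M^{-1})$. These are exactly the two inequalities in the lemma.

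There is essentially no obstacle beyond correctly setting up the block structure and citing interlacing; the only thing to be careful about is noting that $X_{A_2}^T X_{A_2}$ is indeed invertible under the standing assumption (A4) that $n \ge k \ge |A_1 \cup A_2|$ (otherwise the Schur-complement identity does not literally apply, though one can still pass to a limit or reduce to linearly independent columns). Everything else is just the standard min-max inequality restricted to test vectors supported on the $A_1$-block.
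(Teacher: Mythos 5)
Your proof is correct and follows essentially the same route as the paper, whose own proof is simply the one-line remark that the result ``follows from the block-inversion formula and extensions of the Cauchy's interlacing theorem.'' You have filled in exactly those two steps---identifying $(X_{A_1}^T \Pi^{\perp}[A_2] X_{A_1})^{-1}$ as the top-left block of the inverse Gram matrix via the Schur complement, then applying interlacing to that principal submatrix---so there is nothing further to reconcile.
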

\begin{proof}
Follows from the block-inversion formula and extensions of the Cauchy's interlacing theorem.
\end{proof}

\vskip 0.2in


\begin{thebibliography}{41}
\providecommand{\natexlab}[1]{#1}
\providecommand{\url}[1]{\texttt{#1}}
\expandafter\ifx\csname urlstyle\endcsname\relax
  \providecommand{\doi}[1]{doi: #1}\else
  \providecommand{\doi}{doi: \begingroup \urlstyle{rm}\Url}\fi

\bibitem[Bickel et~al.(2009)Bickel, Ritov, and
  Tsybakov]{bickel2009simultaneous}
P.~J. Bickel, Y.~Ritov, and A.~B. Tsybakov.
\newblock Simultaneous analysis of {L}asso and {D}antzig selector.
\newblock \emph{The Annals of Statistics}, 37\penalty0 (4):\penalty0
  1705--1732, 2009.

\bibitem[B{\"u}hlmann and Van De~Geer(2011)]{buhlmann2011statistics}
P.~B{\"u}hlmann and S.~Van De~Geer.
\newblock \emph{Statistics for {H}igh-{D}imensional {D}ata: {M}ethods, {T}heory
  and {A}pplications}.
\newblock Springer-Verlag New York Inc, 2011.

\bibitem[B{\"u}hlmann et~al.(2013)B{\"u}hlmann, R{\"u}timann, van~de Geer, and
  Zhang]{buhlmann2012correlated}
P.~B{\"u}hlmann, P.~R{\"u}timann, S.~van~de Geer, and C.~Zhang.
\newblock Correlated variables in regression: Clustering and sparse estimation.
\newblock \emph{Journal of Statistical Planning and Inference}, 143\penalty0
  (11):\penalty0 1835--1858, 2013.

\bibitem[Cand{\`e}s et~al.(2006)Cand{\`e}s, Romberg, and Tao]{candes2006robust}
E.~J. Cand{\`e}s, J.~Romberg, and T.~Tao.
\newblock Robust uncertainty principles: Exact signal reconstruction from
  highly incomplete frequency information.
\newblock \emph{IEEE Transactions on Information Theory}, 52\penalty0
  (2):\penalty0 489--509, 2006.

\bibitem[Coppersmith and Winograd(1990)]{coppersmith1990matrix}
D.~Coppersmith and S.~Winograd.
\newblock Matrix multiplication via arithmetic progressions.
\newblock \emph{Journal of symbolic computation}, 9\penalty0 (3):\penalty0
  251--280, 1990.

\bibitem[Donoho(2006)]{donoho2006compressed}
D.L. Donoho.
\newblock Compressed sensing.
\newblock \emph{IEEE Transactions on Information Theory}, 52\penalty0
  (4):\penalty0 1289--1306, 2006.

\bibitem[Duarte et~al.(2008)Duarte, Davenport, Takhar, Laska, Sun, Kelly, and
  Baraniuk]{CameraArray}
M.~F. Duarte, M.~A. Davenport, D.~Takhar, J.~N. Laska, T.~Sun, K.~F. Kelly, and
  R.~G. Baraniuk.
\newblock Single-pixel imaging via compressive sampling.
\newblock \emph{IEEE Signal Processing Magazine}, 25\penalty0 (2):\penalty0
  83--91, Mar. 2008.

\bibitem[Elad(2010)]{elad2010sparse}
M.~Elad.
\newblock \emph{Sparse and {R}edundant {R}epresentations: From {T}heory to
  {A}pplications in {S}ignal and {I}mage {P}rocessing}.
\newblock Springer, 2010.

\bibitem[Elad and Aharon(2006)]{elad2006image}
M.~Elad and M.~Aharon.
\newblock Image denoising via sparse and redundant representations over learned
  dictionaries.
\newblock \emph{IEEE Transactions on Image Processing}, 15\penalty0
  (12):\penalty0 3736--3745, 2006.

\bibitem[Fannjiang and Liao(2012)]{fannjiang2012coherence}
A.~Fannjiang and W.~Liao.
\newblock Coherence pattern-guided compressive sensing with unresolved grids.
\newblock \emph{SIAM Journal on Imaging Sciences}, 5\penalty0 (1):\penalty0
  179--202, 2012.

\bibitem[Golub et~al.(1999)Golub, Slonim, Tamayo, Huard, Gaasenbeek, Mesirov,
  Coller, Loh, Downing, Caligiuri, et~al.]{golub1999molecular}
T.~R. Golub, D.~K. Slonim, P.~Tamayo, C.~Huard, M.~Gaasenbeek, J.~P. Mesirov,
  H.~Coller, M.~L. Loh, J.~R. Downing, M.~A. Caligiuri, et~al.
\newblock Molecular classification of cancer: class discovery and class
  prediction by gene expression monitoring.
\newblock \emph{Science}, 286\penalty0 (5439):\penalty0 531--537, 1999.

\bibitem[Grave et~al.(2011)Grave, Obozinski, and Bach]{grave2011trace}
E.~Grave, F.~Obozinski, and F.~Bach.
\newblock Trace {L}asso: {A} trace norm regularization for correlated designs.
\newblock In \emph{Advances in Neural Information Processing Systems (NIPS)},
  2011.

\bibitem[Hsu et~al.(2012)Hsu, Kakade, and Zhang]{Hsu2012Prob}
D.~Hsu, S.~Kakade, and T.~Zhang.
\newblock A tail inequality for quadratic forms of subgaussian random vectors.
\newblock \emph{Electronic Communications in Probability}, 17\penalty0
  (52):\penalty0 1--6, 2012.

\bibitem[Huang et~al.(2011)Huang, Ma, Li, and Zhang]{huang2011sparse}
J.~Huang, S.~Ma, H.~Li, and C.H. Zhang.
\newblock The sparse {L}aplacian shrinkage estimator for high-dimensional
  regression.
\newblock \emph{The Annals of Statistics}, 39\penalty0 (4):\penalty0 2021,
  2011.

\bibitem[Javanmard and Montanari(2013)]{javanmard2013model}
A.~Javanmard and A.~Montanari.
\newblock Model selection for high-dimensional regression under the generalized
  irrepresentability condition.
\newblock In \emph{Advances in Neural Information Processing Systems (NIPS)},
  2013.

\bibitem[Meinshausen and B{\"u}hlmann(2006)]{MeinshausenBuhl2006}
N.~Meinshausen and P.~B{\"u}hlmann.
\newblock {High-dimensional graphs and variable selection with the {L}asso}.
\newblock \emph{The Annals of Statistics}, 34\penalty0 (3):\penalty0 1436,
  2006.

\bibitem[Meinshausen and B{\"u}hlmann(2010)]{meinshausen2010stability}
N.~Meinshausen and P.~B{\"u}hlmann.
\newblock Stability selection.
\newblock \emph{Journal of the Royal Statistical Society: Series B (Statistical
  Methodology)}, 72\penalty0 (4):\penalty0 417--473, 2010.

\bibitem[Meinshausen and Yu(2009)]{meinshausen2009lasso}
N.~Meinshausen and B.~Yu.
\newblock Lasso-type recovery of sparse representations for high-dimensional
  data.
\newblock \emph{The Annals of Statistics}, pages 246--270, 2009.

\bibitem[Melanie(1999)]{melanie1999introduction}
M.~Melanie.
\newblock An {I}ntroduction to {G}enetic {A}lgorithms.
\newblock \emph{Cambridge, Massachusetts London, England, Fifth printing}, 3,
  1999.

\bibitem[Needell and Tropp(2009)]{needell2009cosamp}
D.~Needell and J.~A. Tropp.
\newblock Co{S}a{M}{P}: Iterative signal recovery from incomplete and
  inaccurate samples.
\newblock \emph{Applied and Computational Harmonic Analysis}, 26\penalty0
  (3):\penalty0 301--321, 2009.

\bibitem[Ravikumar et~al.(2010)Ravikumar, Wainwright, and
  Lafferty]{ravikumar2010high}
P.~Ravikumar, M.~J. Wainwright, and J.~D. Lafferty.
\newblock High-dimensional ising model selection using $\ell_1$-egularized
  logistic regression.
\newblock \emph{The Annals of Statistics}, 38\penalty0 (3):\penalty0
  1287--1319, 2010.

\bibitem[Segal et~al.(2003)Segal, Dahlquist, and Conklin]{segal2003regression}
M.R. Segal, K.D. Dahlquist, and B.R. Conklin.
\newblock Regression approaches for microarray data analysis.
\newblock \emph{Journal of Computational Biology}, 10\penalty0 (6):\penalty0
  961--980, 2003.

\bibitem[She(2010)]{she2010}
Y.~She.
\newblock Sparse regression with exact clustering.
\newblock \emph{Electronic Journal of Statistics}, 4:\penalty0 1055--1096,
  2010.

\bibitem[Singh et~al.(2002)Singh, Febbo, Ross, Jackson, Manola, Ladd, Tamayo,
  Renshaw, D'Amico, Richie, et~al.]{singh2002gene}
D.~Singh, P.~G. Febbo, K.~Ross, D.~G. Jackson, J.~Manola, C.~Ladd, P.~Tamayo,
  A.~A. Renshaw, A.~V. D'Amico, J.~P. Richie, et~al.
\newblock Gene expression correlates of clinical prostate cancer behavior.
\newblock \emph{Cancer cell}, 1\penalty0 (2):\penalty0 203--209, 2002.

\bibitem[Tibshirani(1996)]{TibshiraniLasso1994}
R.~Tibshirani.
\newblock Regression shrinkage and selection via the {L}asso.
\newblock \emph{Journal of the Royal Statistical Society, Series B},
  58:\penalty0 267--288, 1996.

\bibitem[Tropp and Gilbert(2007)]{tropp2007signal}
J.~A. Tropp and A.~C. Gilbert.
\newblock Signal recovery from random measurements via orthogonal matching
  pursuit.
\newblock \emph{IEEE Transactions on Information Theory}, 53\penalty0
  (12):\penalty0 4655--4666, 2007.

\bibitem[van~de Geer et~al.(2011)van~de Geer, B{\"u}hlmann, and
  Zhou]{GeerBuhlZhouAdaptive2011}
S.~van~de Geer, P.~B{\"u}hlmann, and S.~Zhou.
\newblock The adaptive and the thresholded {L}asso for potentially misspecified
  models (and a lower bound for the {L}asso).
\newblock \emph{Electronic Journal of Statistics}, 5:\penalty0 688--749, 2011.

\bibitem[Varoquaux et~al.(2012)Varoquaux, Gramfort, and
  Thirion]{varoquaux2012small}
G.~Varoquaux, A.~Gramfort, and B.~Thirion.
\newblock Small-sample brain mapping: sparse recovery on spatially correlated
  designs with randomization and clustering.
\newblock In \emph{Proceedings of the 29th International Conference on Machine
  Learning (ICML)}, pages 1375--1382, 2012.

\bibitem[Vats and Baraniuk(2013)]{VatsSwapNIPS}
D.~Vats and R.~G. Baraniuk.
\newblock When in doubt, {SWAP}: High-dimensional sparse recovery from
  correlated measurements.
\newblock In \emph{Advances in Neural Information Processing Systems (NIPS)},
  2013.

\bibitem[Vats and Baraniuk(2014)]{VatsBaraniuk2014}
D.~Vats and R.~G. Baraniuk.
\newblock Path thresholding: Asymptotically tuning-free high-dimensional sparse
  regression,.
\newblock In \emph{Proceedings of the 17th International Conference on
  Artificial Intelligence and Statistics (AISTATS)}, 2014.

\bibitem[Vats(2014)]{VatsMuG2012}
Divyanshu Vats.
\newblock High-dimensional screening using multiple grouping of variables.
\newblock \emph{IEEE Transactions on Signal Processing}, 62\penalty0
  (3):\penalty0 694--702, 2014.

\bibitem[Vershynin(2010)]{VershyninRMT}
R.~Vershynin.
\newblock \emph{Compressed {S}ensing: {T}heory and {A}pplications}, chapter
  Introduction to the non-asymptotic analysis of random matrices.
\newblock Cambridge University Press, 2010.

\bibitem[Wainwright(2009{\natexlab{a}})]{Wainwright2009}
M.~J. Wainwright.
\newblock Sharp thresholds for noisy and high-dimensional recovery of sparsity
  using $\ell_1$-constrained quadratic programming ({L}asso).
\newblock \emph{IEEE Transactions on Information Theory}, 55\penalty0 (5), May
  2009{\natexlab{a}}.

\bibitem[Wainwright(2009{\natexlab{b}})]{wainwright2009information}
M.~J. Wainwright.
\newblock Information-theoretic limits on sparsity recovery in the
  high-dimensional and noisy setting.
\newblock \emph{IEEE Transactions on Information Theory}, 55\penalty0
  (12):\penalty0 5728--5741, 2009{\natexlab{b}}.

\bibitem[Wasserman and Roeder(2009)]{wasserman2009high}
L.~Wasserman and K.~Roeder.
\newblock High dimensional variable selection.
\newblock \emph{The Annals of Statistics}, 37\penalty0 (5A):\penalty0
  2178--2201, 2009.

\bibitem[Zhang(2009)]{zhang2009some}
T.~Zhang.
\newblock Some sharp performance bounds for least squares regression with
  $\ell_1$ regularization.
\newblock \emph{The Annals of Statistics}, 37\penalty0 (5A):\penalty0
  2109--2144, 2009.

\bibitem[Zhang(2010)]{ZhangMulti2010}
T.~Zhang.
\newblock Analysis of multi-stage convex relaxation for sparse regularization.
\newblock \emph{Journal of Machine Learning Research}, 11:\penalty0 1081--1107,
  March 2010.

\bibitem[Zhang(2011)]{zhang2011adaptive}
T.~Zhang.
\newblock Adaptive forward-backward greedy algorithm for learning sparse
  representations.
\newblock \emph{IEEE Transactions on Information Theory}, 57\penalty0
  (7):\penalty0 4689--4708, 2011.

\bibitem[Zhao and Yu(2006)]{ZhaoYuLasso2006}
P.~Zhao and B.~Yu.
\newblock On model selection consistency of {L}asso.
\newblock \emph{Journal of Machine Learning Research}, 7:\penalty0 2541--2563,
  2006.

\bibitem[Zhu and Giannakis(2012)]{ZhuPower2012}
H.~Zhu and G.B. Giannakis.
\newblock Sparse overcomplete representations for efficient identification of
  power line outages.
\newblock \emph{IEEE Transactions on Power Systems}, 27\penalty0 (4):\penalty0
  2215 --2224, Nov. 2012.

\bibitem[Zou and Hastie(2005)]{zou2005regularization}
H.~Zou and T.~Hastie.
\newblock Regularization and variable selection via the elastic net.
\newblock \emph{Journal of the Royal Statistical Society: Series B (Statistical
  Methodology)}, 67\penalty0 (2):\penalty0 301--320, 2005.

\end{thebibliography}
\end{document}